\newtheorem{teo}{Theorem}[section]
\newtheorem{coro}[teo]{Corollary}
\newtheorem{lema}[teo]{Lemma}
\newtheorem{propo}[teo]{Proposition}
\theoremstyle{definition}
\newtheorem{defi}[teo]{Definition}
\newtheorem{remark}[teo]{Remark}
\numberwithin{equation}{section}
\begin{document}

\title[Toeplitz and moment maps on domains of type III]{Commuting Toeplitz operators and moment maps on Cartan domains of type III}

\author[Cuevas-Estrada]{David Cuevas-Estrada}
\address{Centro de Investigaci\'on en Matem\'aticas\\
	Guanajuato, Guanajuato\\
	M\'exico}
\email{david.cuevas@cimat.mx}

\author[Quiroga-Barranco]{Raul Quiroga-Barranco}
\address{Centro de Investigaci\'on en Matem\'aticas\\
	Guanajuato, Guanajuato\\
	M\'exico}
\email{quiroga@cimat.mx}

\subjclass{Primary 47B35 30H20; Secondary 53D20}

\keywords{Toeplitz operators, Bergman spaces, Cartan domains, Lie groups, K\"ahler manifolds, moment maps}


\begin{abstract}
	Let $D^{III}_n$ and $\mathscr{S}_n$ be the Cartan domains of type III that consist of the symmetric $n \times n$ complex matrices $Z$ that satisfy $Z\overline{Z} < I_n$ and $\mathrm{Im}(Z) > 0$, respectively. For these domains, we study weighted Bergman spaces and Toeplitz operators acting on them. We consider the Abelian groups $\mathbb{T}$, $\mathbb{R}_+$ and $\mathrm{Symm}(n,\mathbb{R})$ (symmetric $n \times n$ real matrices), and their actions on the Cartan domains of type III. We call the corresponding actions Abelian Elliptic, Abelian Hyperbolic and Parabolic. The moment maps of these three actions are computed and functions of them (moment map symbols) are used to construct commutative $C^*$-algebras generated by Toeplitz operators. This leads to a natural generalization of known results for the unit disk. We also compute spectral integral formulas for the Toeplitz operators corresponding to the Abelian Elliptic and Parabolic cases.
\end{abstract}

\maketitle

\section{Introduction}
Bounded symmetric domains, weighted Bergman spaces on such domains and Toeplitz operators acting on Bergman spaces constitute three fundamental objects in operator theory. The reason is that they are specific enough to make explicit computations that lead to interesting results, and at the same time they are complicated enough so that such results are non-trivial and enlightening.

For some years now, operator theory analysts have found plenty of examples of commutative $C^*$-algebras generated by Toeplitz operators when the corresponding set of symbols is suitably restricted. The first such example was considered in \cite{KorenblumZhu1995}, where it was proved that Toeplitz operators on the unit disk $\mathbb{D}$ with radial symbols are diagonal with respect to the orthogonal monomial basis. Clearly, a symbol on $\mathbb{D}$ is radial if it is invariant under the natural $\mathbb{T}$-action. We note that the $\mathbb{T}$-action on the unit disk $\mathbb{D}$ realizes, up to conjugacy, all the elliptic M\"obius transformations.

The introduction in \cite{KorenblumZhu1995} of Toeplitz operators with radial symbols was followed by a series of developments found in \cite{GKVElliptic,GKVHyperbolic,GKVParabolic}. These references considered all three fundamental types of M\"obius transformations on the unit disk $\mathbb{D}$: elliptic, hyperbolic and parabolic. It was proved that symbols that are invariant under the corresponding groups of M\"obius transformations yield Toeplitz operators that generate commutative $C^*$-algebras. Then, it was found in \cite{GQVJFA} that, under suitable smoothness conditions, these constructions yield the only commutative $C^*$-algebras generated by Toeplitz operators acting on every weighted Bergman space on the unit disk $\mathbb{D}$.

The next step was to study the behavior in the case of higher dimensional bounded symmetric domains, and the unit ball $\mathbb{B}^n$ in $\mathbb{C}^n$ was the first natural example to consider. It was found in \cite{QVBallI,QVBallII} that there exists exactly, up to conjugacy, $n+2$ maximal Abelian subgroups (MASGs) of biholomorphisms each one of which yields invariant symbols whose Toeplitz operators generate commutative $C^*$-algebras. This is a natural generalization of the situation observed for the unit disk $\mathbb{D}$, since in this case we have $n=1$ from which it follows the existence of three MASGs. Nevertheless, some simplicity is lost because the number of MASGs grows with the dimension of the unit ball $\mathbb{B}^n$.

After these works, many other results have been found where a suitable symmetry of the symbols yields commuting Toeplitz operators. Such symmetry is in most cases a consequence of the invariance with respect to a certain biholomorphism group. This has been observed for every bounded symmetric domain on every weighted Bergman space. We refer to \cite{DOQJFA} for a very general collection of related results.

In a parallel line of development, symplectic geometry has been found to play an special role in the construction of symbols whose Toeplitz operators generate commutative $C^*$-algebras. It was proved in \cite{QSJFAUnitBall} that, for the unit ball $\mathbb{B}^n$ and on any of its weighted Bergman spaces, every single Abelian connected group of biholomorphisms provides symbols with mutually commuting Toeplitz operators. For such a group $H$ acting on $\mathbb{B}^n$ this is achieved by considering the so-called moment map symbols for $H$ instead of $H$-invariant symbols. We refer to Section~\ref{sec:CartanIIIGeometry} for the details of the definitions and properties involved. However, we mention here that the moment map of an action is a mapping defined on the corresponding bounded symmetric domain using its symplectic manifold structure, and the moment map symbols are functions of such moment maps. Another example of the use of moment map symbols is given by the results found in \cite{QRCAOT}, where the bounded symmetric domain considered is the Cartan domain of type IV.

The goal of this work is to apply these ideas to study Toeplitz operators with moment map symbols acting on the weighted Bergman spaces of Cartan domains of type III. We recall that such domains are realized by the so-called generalized unit disk $D^{III}_n$ and Siegel's generalized upper half-plane $\mathscr{S}_n$ (see Section~\ref{sec:CartanIIIAnalysis}). In fact, as we show in Section~\ref{sec:3Groups}, to these domains we can associate three biholomorphic actions that naturally generalize the three actions described above for the unit disk. Hence, we call these actions on either $D^{III}_n$ or $\mathscr{S}_n$ the Elliptic, Hyperbolic and Parabolic Actions (see subsection~\ref{subsec:ellhyperpar_actions}). These come from the groups $\mathrm{U}(n)$, $\mathrm{GL}(n,\mathbb{R})$ and $\mathrm{Symm}(n,\mathbb{R})$, respectively, of which only the last one is Abelian for every $n$. Hence, we introduce actions that we call Abelian Elliptic and Abelian Hyperbolic (see Definition~\ref{defi:AbelianEllHyp}). As noted in Remark~\ref{rmk:AbelianActions} all three Abelian actions can be seen as coming from the corresponding centers of the original groups involved.

We present in Section~\ref{sec:CartanIIIAnalysis} all the theory needed to understand the Riemannian and symplectic geometry background used in the rest of the work. In particular, we compute in subsection~\ref{subsec:BergmanmetricKahler} the Bergman metric and the K\"ahler form for both $D^{III}_n$ and $\mathscr{S}_n$. We use this to compute in subsection~\ref{subsec:momentmaps_Abelian} the moment maps for our three distinguished actions: Abelian Elliptic, Abelian Hyperbolic and Parabolic.

We introduce in Section~\ref{sec:ToeplitzSpecialSymbols} Toeplitz operators with special symbols. First, we consider invariant symbols in subsection~\ref{subsec:inv-symbols} and we recall some known commutative $C^*$-algebras generated by Toeplitz operators for our setup. Second, we introduce moment map symbols in Definition~\ref{defi:momentmap_symbol}, and with the use of our moment map computations we obtain explicit formulas for moment map symbols for our three distinguished Abelian actions. We obtain the following general description (see Proposition~\ref{propo:moment_symbols_explicit} for the precise statements)
\begin{itemize}
	\item Abelian Elliptic symbols:
		$Z \longmapsto f\big(\mathrm{tr}\big((I_n - Z\overline{Z})^{-1}\big)\big)$.
	\item Abelian Hyperbolic symbols:
		$Z \longmapsto 
		f\big(\mathrm{tr}(\mathrm{Im}(Z)^{-1} \mathrm{Re}(Z))
		\big)$.
	\item Parabolic symbols:
		$Z \longmapsto f(\mathrm{Im}(Z))$.
\end{itemize}
From a quick comparison with the notions considered in the current literature, we observe that these three types of symbols are natural, almost canonical, generalizations from the unit disk $\mathbb{D}$ to the domains $D^{III}_n$ and $\mathscr{S}_n$ of the symbols obtained from the elliptic, hyperbolic and parabolic actions on~$\mathbb{D}$.

We prove in Theorem~\ref{teo:commToep_3Abeliangroups} that the three types of symbols above yield Toeplitz operators that generate commutative $C^*$-algebras on every weighted Bergman space. Our method of proof is based on the fact that these moment map symbols have an additional invariance: they are invariant under the group from which the Abelian group is the center. This allows to use the results from subsection~\ref{subsec:inv-symbols}. On the other hand, it is interesting to observe the importance of having only three types of symbols in Theorem~\ref{teo:commToep_3Abeliangroups} as a generalization of the corresponding result for the unit disk. This is explained in Remark~\ref{rmk:3Abelian_DIII_vs_Disk}.

Finally, we obtain in Section~\ref{sec:SpectralIntegrals} integral formulas for the Toeplitz operators with moment map symbols that provide simultaneous diagonalization for them. This is done for the Abelian Elliptic and Parabolic Actions; we leave the Abelian Hyperbolic case as an important project to develop. The relevant results are Theorems~\ref{teo:Toeplitz_muT} and \ref{teo:Toeplitz_SymmnR}. The simplicity of the formulas presented in Theorem~\ref{teo:Toeplitz_muT} highlights the importance of using symplectic geometry to solve these operator theory problems. Likewise, Theorem~\ref{teo:Toeplitz_SymmnR} has very natural formulas that involve a Fourier-Laplace transform obtained in Theorem~\ref{teo:RlambdaFourierLaplace}.

\section{The Cartan domains of type III and their analysis}
\label{sec:CartanIIIAnalysis}
We recall the basic geometric and analytic properties of the Cartan domains of type III.

\subsection{Bounded and unbounded realizations}
In the rest of this work, and for $\mathbb{F}$ either $\mathbb{R}$ or $\mathbb{C}$, we will denote by $\mathrm{Mat}(n,\mathbb{F})$ the space of $n \times n$ matrices over $\mathbb{F}$ and by $\mathrm{Symm}(n,\mathbb{F})$ its subspace of symmetric matrices. As usual, $\mathrm{GL}(n,\mathbb{F})$ will denote the Lie group of invertible elements of $\mathrm{Mat}(n,\mathbb{F})$.

\begin{defi}
	The $n$-th Cartan domain of type III is the complex domain given by $D^{III}_n=\{Z\in \mathrm{Symm}(n,\mathbb{C}) \mid I_n-Z\overline{Z} >0\}$.
\end{defi}

The domain $D^{III}_n$ is clearly bounded. On the other hand, there is a natural unbounded domain associated to $D^{III}_n$.

\begin{defi}
	The $n$-th generalized Siegel domain is the complex domain given by $\mathscr{S}_n = \{ Z \in \mathrm{Symm}(n,\mathbb{C}) \mid \mathrm{Im}(Z) > 0 \}$.
\end{defi}

We note that $D^{III}_1$ and $\mathscr{S}_1$ are precisely the unit disk $\mathbb{D}$ and the upper half-plane $\mathbb{H}$, respectively, in the complex plane $\mathbb{C}$. For this reason, the domains $D^{III}_n$ and $\mathscr{S}_n$ are also known as the generalized unit disk and generalized upper-half plane, respectively. Furthermore, these domains are related in a way similar to the well known $1$-dimensional case. For the next result we refer to \cite[Exercise~C,~Chapter~VIII]{Helgason}.

\begin{propo}\label{propo:diffoftwodomains}
	The map $\varphi : \mathscr{S}_n \rightarrow D^{III}_n$ given by
	\[
		Z\mapsto (I_n+iZ)(I_n-iZ)^{-1},
	\]
	is a biholomorphism from $\mathscr{S}_n$ onto $D^{III}_n$.
\end{propo}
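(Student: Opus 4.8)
The plan is to realize $\varphi$ as a matrix Cayley transform and to exhibit an explicit holomorphic inverse, checking at each stage that the positivity conditions defining the two domains are preserved.

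First I would verify that $\varphi$ is well defined on $\mathscr{S}_n$, that is, that $I_n - iZ$ is invertible whenever $\mathrm{Im}(Z) > 0$. Writing $Z = X + iY$ with $X, Y \in \mathrm{Symm}(n,\mathbb{R})$ and $Y > 0$, one has $I_n - iZ = (I_n + Y) - iX$; if $\big((I_n+Y)-iX\big)v = 0$ for some $v \in \mathbb{C}^n$, then pairing with $v$ gives $\langle (I_n + Y)v, v\rangle = i\langle Xv, v\rangle$, whose left-hand side is real and strictly positive for $v \neq 0$ while the right-hand side is purely imaginary, forcing $v = 0$. Since matrix inversion is holomorphic on the open set $\{Z : \det(I_n - iZ) \neq 0\}$, the map $\varphi$ is holomorphic on $\mathscr{S}_n$.

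Next I would establish the two algebraic features of $W := \varphi(Z)$. Symmetry follows because $I_n + iZ$ and $I_n - iZ$ are commuting polynomials in the symmetric matrix $Z$, so that $(I_n + iZ)(I_n - iZ)^{-1} = (I_n - iZ)^{-1}(I_n + iZ)$; transposing the right-hand expression and using $Z^T = Z$ returns $W$, so $W$ is symmetric and hence $\overline{W} = W^*$. The membership $W \in D^{III}_n$ is the heart of the argument. Setting $A = I_n - iZ$ and using $A^* = I_n + i\overline{Z}$ (valid since $Z$ is symmetric), a direct expansion exploiting the identities $AW = I_n + iZ$ and $\overline{W}A^* = I_n - i\overline{Z}$ yields
\[
	A\,(I_n - W\overline{W})\,A^* = (I_n-iZ)(I_n+i\overline{Z}) - (I_n+iZ)(I_n - i\overline{Z}) = 4\,\mathrm{Im}(Z).
\]
Since $\mathrm{Im}(Z) > 0$, congruence by the invertible matrix $A^{-1}$ gives $I_n - W\overline{W} = 4\,A^{-1}\,\mathrm{Im}(Z)\,(A^{-1})^* > 0$, so indeed $W \in D^{III}_n$.

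Finally I would produce the inverse. Solving $W(I_n - iZ) = I_n + iZ$ for $Z$ gives the candidate $\psi(W) = i(I_n + W)^{-1}(I_n - W)$, which is well defined and holomorphic on $D^{III}_n$ because $I_n - W\overline{W} > 0$ forces $\|W\| < 1$, hence $-1 \notin \mathrm{spec}(W)$ and $I_n + W$ is invertible. A computation entirely parallel to the one above shows that $\psi(W)$ is symmetric with $\mathrm{Im}(\psi(W)) > 0$, so $\psi$ maps $D^{III}_n$ into $\mathscr{S}_n$; together with the algebraic identities $\psi \circ \varphi = \mathrm{id}$ and $\varphi \circ \psi = \mathrm{id}$, valid wherever the rational expressions are defined, this proves $\varphi$ is a biholomorphism. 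I expect the positivity computation of the third step to be the main obstacle: one must track transposes and conjugates carefully, since for complex symmetric $Z$ one has $Z^T = Z$ but $Z^* = \overline{Z}$, and it is exactly this distinction that converts the contraction condition $I_n - W\overline{W} > 0$ into the half-plane condition $\mathrm{Im}(Z) > 0$.
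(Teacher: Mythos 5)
Your proof is correct, and it supplies exactly what the paper omits: the paper gives no proof of this proposition, deferring entirely to \cite[Exercise~C, Chapter~VIII]{Helgason}, and your argument is the standard Cayley-transform computation that the cited exercise calls for. In particular, the key congruence identity $A\,(I_n - W\overline{W})\,A^* = 4\,\mathrm{Im}(Z)$ with $A = I_n - iZ$ checks out (the cross terms give $2i\overline{Z} - 2iZ = 4\,\mathrm{Im}(Z)$), the use of $\overline{W} = W^*$ for symmetric $W$ to make $I_n - W\overline{W}$ Hermitian is correctly exploited, and the inverse $\psi(W) = i(I_n + W)^{-1}(I_n - W)$ together with the parallel identity $(I_n+W)\big(\psi(W) - \overline{\psi(W)}\big)(I_n+W)^* = 2i\,(I_n - W\overline{W})$ closes the argument completely.
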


Because of the previous result, the domain $\mathscr{S}_n$ is also known as the unbounded realization of the $n$-th Cartan domain of type III.

\subsection{Biholomorphism groups}
In this section we describe the groups of biholomorphisms of the domains $D^{III}_n$ and $\mathscr{S}_n$ introduced above. We start by considering the matrices
\[
	I_{n,n} =
		\begin{pmatrix}
			I_n & 0 \\ 
			0 & -I_n
		\end{pmatrix}, \quad
	J_n = 
		\begin{pmatrix}
			0 & -I_n \\ 
			I_n & 0
		\end{pmatrix}.
\]
These naturally yield the next Lie groups.
\begin{align*}
	\mathrm{Sp}(n,\mathbb{C}) 
		&= \{ M \in \mathrm{Mat}(2n,\mathbb{C}) 
			\mid M^\top J_n M = J_n \}, \\
	\mathrm{Sp}(n,\mathbb{R}) 
		&= \{ M \in \mathrm{Mat}(2n,\mathbb{R}) 
			\mid M^\top J_n M = J_n \}, \\
	\mathrm{U}(n,n) 
		&= \{ M \in \mathrm{Mat}(2n,\mathbb{C}) 
			\mid M^* I_{n,n} M = I_{n,n} \}.
\end{align*}
We recall the notion of a bounded symmetric domain.
\begin{defi}
	A domain $D \subset \mathbb{C}^N$ is called symmetric if for every $z \in D$ there is a biholomorphism $\varphi_z : D \rightarrow D$ that satisfies $\varphi_z(w) = w$ if and only if $w = z$. If $D$ is also bounded, then $D$ is called a bounded symmetric domain. If $D$ satisfies $tD = D$, for every $t \in \mathbb{T}$, then the domain $D$ is called circled.
\end{defi}

Through suitable actions of the groups introduced above, one can prove that the domains $D^{III}_n$ and $\mathscr{S}_n$ are symmetric. For the next result we refer to \cite[Paragraph~(2.3)]{Mok} (see also \cite{Helgason}). From now on, for any given matrix $M \in \mathrm{Mat}(2n,\mathbb{C})$ a decomposition of the form
\[
	M = 
	\begin{pmatrix}
		A & B \\
		C & D
	\end{pmatrix},
\]
will always be taken so that $A,B,C,D$ have size $n \times n$.

\begin{propo}\label{propo:biholoDIIIn}
	The action via generalized M\"obius transformations given~by
	\begin{align*}
		\mathrm{Sp}(n,\mathbb{C})\cap \mathrm{U}(n,n)
			\times D^{III}_n & \longrightarrow D^{III}_n \\
		\begin{pmatrix}
				A & B \\ 
				C & D		
		\end{pmatrix}\cdot Z & \longmapsto (AZ+B)(CZ+D)^{-1},
	\end{align*}
	realizes the biholomorphism group of $D^{III}_n$. Furthermore, $D^{III}_n$ is a circled bounded symmetric domain and it is given as the quotient
	\[
		D^{III}_n \simeq 
			\mathrm{Sp}(n,\mathbb{C}) \cap \mathrm{U}(n,n) /
				\mathrm{U}(n),
	\]
	where $\mathrm{U}(n)$ embedded in $\mathrm{Sp}(n,\mathbb{C}) \cap \mathrm{U}(n,n)$ by
	\[
		A \longmapsto 
			\begin{pmatrix}
				A & 0 \\
				0 & \overline{A}
			\end{pmatrix}
	\]
	corresponds to the group of biholomorphisms of $D^{III}_n$ that fix the origin.
\end{propo}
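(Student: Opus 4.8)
The plan is to first confirm that the stated formula defines a genuine action of $G := \mathrm{Sp}(n,\mathbb{C}) \cap \mathrm{U}(n,n)$ on $D^{III}_n$ by biholomorphisms, then to read off the structural properties (circled, homogeneous, symmetric, together with the isotropy at the origin), and finally to upgrade homogeneity to the assertion that $G$ realizes the \emph{entire} biholomorphism group. I would begin by recording the two families of quadratic relations satisfied by $M = \begin{pmatrix} A & B \\ C & D \end{pmatrix} \in G$. The symplectic condition $M^\top J_n M = J_n$ gives $A^\top C = C^\top A$, $B^\top D = D^\top B$ and $A^\top D - C^\top B = I_n$, while the pseudo-unitary condition $M^* I_{n,n} M = I_{n,n}$ gives $A^* A - C^* C = I_n$, $B^* B - D^* D = -I_n$ and $A^* B = C^* D$. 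Using these relations I would check, for $Z \in D^{III}_n$, that $CZ + D$ is invertible, that $W := (AZ+B)(CZ+D)^{-1}$ is again symmetric, and that $I_n - W\overline{W} > 0$, so $W \in D^{III}_n$; the positivity estimate is the one point where the defining relations are used in an essential way. The cocycle identity $(M_1 M_2)\cdot Z = M_1 \cdot (M_2 \cdot Z)$ is then a routine block computation, yielding a well-defined action by biholomorphisms.

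Next I would dispatch the easy structural claims. The circled property is immediate and does not even use the group: for $t \in \mathbb{T}$ one has $(tZ)\overline{(tZ)} = |t|^2 Z\overline{Z} = Z\overline{Z}$, so $I_n - (tZ)\overline{(tZ)} = I_n - Z\overline{Z} > 0$, whence $tZ \in D^{III}_n$. For the isotropy at the origin I would compute $M \cdot 0 = B D^{-1}$, so that $M \cdot 0 = 0$ forces $B = 0$; the relations above then collapse to $A^* A = I_n$, $C = 0$ and $D = \overline{A}$, identifying the stabilizer exactly with the image of the embedding $A \mapsto \mathrm{diag}(A,\overline{A})$ of $\mathrm{U}(n)$. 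A direct substitution shows this subgroup acts linearly by $Z \mapsto A Z A^\top$ (using $\overline{A}^{-1} = A^\top$).

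Transitivity I would establish by exhibiting, for each $Z \in D^{III}_n$, an explicit element of $G$ with $B D^{-1} = Z$; the natural candidate is the transvection with $B = Z(I_n - \overline{Z}Z)^{-1/2}$ and $D = (I_n - \overline{Z}Z)^{-1/2}$, in direct analogy with the disk automorphism $z \mapsto (z+a)(1+\overline{a}z)^{-1}$, and one checks it satisfies the relations of $G$. Combined with the isotropy computation this gives the diffeomorphism $D^{III}_n \simeq G/\mathrm{U}(n)$. The symmetric property then follows by transporting the symmetry at the origin: the map $\varphi_0(Z) = -Z$ lies in $G$ (it is the isotropy element with $A = iI_n$, since $(iI_n)Z(iI_n)^\top = -Z$) and has $0$ as its unique fixed point, so conjugating $\varphi_0$ by a transvection carrying $0$ to $z$ produces the required $\varphi_z$ for arbitrary $z$.

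The main obstacle is the claim that $G$ realizes the full biholomorphism group, and not merely a transitive subgroup. Here I would invoke H.~Cartan's classical theorem that every biholomorphism of a bounded circled domain fixing the origin is linear, which reduces the problem to showing that the linear automorphisms of $D^{III}_n$ are precisely the maps $Z \mapsto A Z A^\top$ with $A \in \mathrm{U}(n)$. This last identification is where the genuine work lies; I would carry it out by analyzing which linear maps of $\mathrm{Symm}(n,\mathbb{C})$ preserve the condition $I_n - Z\overline{Z} > 0$, equivalently which preserve the Shilov boundary of $D^{III}_n$. Granting this, an arbitrary biholomorphism $\psi$ can be pre-adjusted: choosing $g \in G$ with $g \cdot \psi(0) = 0$ (possible by transitivity), the composition $g \circ \psi$ fixes the origin, hence is linear by Cartan's theorem, hence lies in the embedded $\mathrm{U}(n) \subset G$; therefore $\psi = g^{-1} \circ (g \circ \psi) \in G$, which completes the proof.
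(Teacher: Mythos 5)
Your proposal is correct, but note that the paper does not actually prove Proposition~\ref{propo:biholoDIIIn}: it is stated as a known result with a pointer to \cite[Paragraph~(2.3)]{Mok} and \cite{Helgason}, where it is obtained from the general Lie-theoretic structure theory of Hermitian symmetric spaces (Harish-Chandra realization, $G/K$ description). Your blind argument instead reconstructs the classical direct route going back to Siegel and Hua, and its architecture is sound: the block relations you extract from $M^\top J_n M = J_n$ and $M^*I_{n,n}M = I_{n,n}$ are the correct ones; the isotropy computation ($B=0$, then $C=0$, $A^*A = I_n$, $D = \overline{A}$ from $A^\top D = I_n$) is right and identifies the stabilizer with the embedded $\mathrm{U}(n)$ acting by $Z \mapsto AZA^\top$; your transvection does lie in the group (with the implicit blocks $A = (I_n - Z\overline{Z})^{-1/2}$, $C = \overline{B}$, the membership check using the intertwining relation $Z(I_n-\overline{Z}Z)^{-1}=(I_n-Z\overline{Z})^{-1}Z$ that the paper itself exploits in the proof of Theorem~\ref{teo:BergmanMetricFormula}); and the symmetry at a general point by conjugating $Z \mapsto -Z$ is the standard device. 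What distinguishes the two routes: yours is elementary and self-contained \emph{modulo} the two items you explicitly leave as IOUs, namely the positivity estimate $I_n - W\overline{W} > 0$ (Siegel's identity relating $I_n - W\overline{W}$ to $I_n - Z\overline{Z}$ via congruence by $CZ+D$, which also yields invertibility of $CZ+D$) and, more substantially, the classification of the \emph{linear} automorphisms of $D^{III}_n$ as $Z \mapsto AZA^\top$ with $A \in \mathrm{U}(n)$. The latter is a genuine theorem, not a routine check: your extreme-point strategy works (the extreme points of $\overline{D^{III}_n}$ are the symmetric unitaries, and a linear automorphism preserves them), but it then requires a Hua--Marcus-type linear preserver result for symmetric unitary matrices; an alternative closer to the cited references is to use that the linear isotropy is compact and preserves the Bergman metric at $0$. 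Since you grant rather than prove this lemma, your proposal is a correct and well-organized reduction to classical facts rather than a complete proof --- which is a reasonable standard here, given that the paper itself delegates the whole statement to the literature. One small point of interpretation: the action is not faithful ($\pm I_{2n}$ act trivially), so ``realizes the biholomorphism group'' means surjectivity of $\mathrm{Sp}(n,\mathbb{C})\cap\mathrm{U}(n,n)$ onto it, and that is exactly what your final Cartan-theorem argument establishes.
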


Similarly, we have the next description of the biholomorphism group of the domain $\mathscr{S}_n$. We now refer to \cite[Exercise~C,~Chapter~VIII]{Helgason}.

\begin{propo}\label{propo:biholoSn}
	The action via generalized M\"obius transformations given by
	\begin{align*}
		\mathrm{Sp}(n,\mathbb{R})
			\times \mathscr{S}_n & \longrightarrow \mathscr{S}_n \\
			\begin{pmatrix}
				A & B \\ 
				C & D		
			\end{pmatrix}\cdot Z & \longmapsto (AZ+B)(CZ+D)^{-1},
	\end{align*}
	realizes the biholomorphism group of $\mathscr{S}_n$. Furthermore, $\mathscr{S}_n$ is a symmetric domain and it is given as the quotient
	\[
		\mathscr{S}_n = \mathrm{Sp}(n,\mathbb{R})/\mathrm{U}(n),
	\]
	where $\mathrm{U}(n)$ embedded in $\mathrm{Sp}(n,\mathbb{R})$ by
	\[
		A \longmapsto
			\begin{pmatrix}
				\mathrm{Re}(A) & \mathrm{Im}(A) \\
				-\mathrm{Im}(A) & \mathrm{Re}(A)
			\end{pmatrix}
	\]
	corresponds to the group of biholomorphisms of $\mathscr{S}_n$ that fix the matrix $iI_n$.
\end{propo}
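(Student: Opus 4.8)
The plan is to transport the whole statement from the bounded realization $D^{III}_n$, whose biholomorphism group is given in Proposition~\ref{propo:biholoDIIIn}, using the Cayley biholomorphism $\varphi$ of Proposition~\ref{propo:diffoftwodomains}. First I would note that $\varphi$ is itself a generalized M\"obius transformation: setting
\[
	\mathcal{C} = \begin{pmatrix} iI_n & I_n \\ -iI_n & I_n \end{pmatrix},
\]
one has $\varphi(Z) = (iZ+I_n)(-iZ+I_n)^{-1} = \mathcal{C}\cdot Z$. Conjugation of M\"obius actions then shows that the biholomorphism group of $\mathscr{S}_n$ is the image under $M\mapsto \mathcal{C}^{-1}M\mathcal{C}$ of the group $\mathrm{Sp}(n,\mathbb{C})\cap\mathrm{U}(n,n)$, and that this conjugation intertwines the two actions through $\varphi$. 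The entire statement thus reduces to the single group identity
\[
	\mathcal{C}^{-1}\big(\mathrm{Sp}(n,\mathbb{C})\cap\mathrm{U}(n,n)\big)\mathcal{C} = \mathrm{Sp}(n,\mathbb{R}).
\]

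To prove this Cayley identity I would argue in two steps. For the symplectic part, a direct computation gives $\mathcal{C}^\top J_n\mathcal{C} = 2iJ_n$, so that $M^\top J_n M = J_n$ forces $N=\mathcal{C}^{-1}M\mathcal{C}$ to satisfy $N^\top J_n N = J_n$, i.e.\ $N\in\mathrm{Sp}(n,\mathbb{C})$. For reality, I would first record that any $M\in\mathrm{Sp}(n,\mathbb{C})\cap\mathrm{U}(n,n)$ satisfies $\overline{M} = KMK$, where $K = \begin{pmatrix} 0 & I_n \\ I_n & 0 \end{pmatrix}$ is the swap; this follows by combining $M^{-\top} = -J_n M J_n$ (the symplectic relation) with $\overline{M} = I_{n,n}M^{-\top}I_{n,n}$ (the $\mathrm{U}(n,n)$ relation). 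Since one also checks directly that $\overline{\mathcal{C}} = \mathcal{C}(-I_{n,n})$ and $K\mathcal{C}(-I_{n,n}) = \mathcal{C}$, these identities combine to give $\overline{N} = N$, so $N$ is real and hence lies in $\mathrm{Sp}(n,\mathbb{R})$. Reversing the computation yields the reverse inclusion, establishing the displayed equality of groups.

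With the Cayley identity in hand, the remaining assertions follow by transport of structure. As $\varphi$ is a biholomorphism and $D^{III}_n$ is symmetric by Proposition~\ref{propo:biholoDIIIn}, the domain $\mathscr{S}_n$ is symmetric too, its symmetries being the $\varphi$-conjugates of those on $D^{III}_n$. The homogeneous-space description comes from $D^{III}_n \simeq (\mathrm{Sp}(n,\mathbb{C})\cap\mathrm{U}(n,n))/\mathrm{U}(n)$: since $\varphi(iI_n)=0$, the isotropy of $\mathscr{S}_n$ at $iI_n$ is the $\mathcal{C}$-conjugate of the isotropy of $D^{III}_n$ at the origin. Conjugating the embedding $A\mapsto\mathrm{diag}(A,\overline{A})$ by $\mathcal{C}$ and simplifying with $A=\mathrm{Re}(A)+i\,\mathrm{Im}(A)$ would produce exactly $\begin{pmatrix} \mathrm{Re}(A) & \mathrm{Im}(A) \\ -\mathrm{Im}(A) & \mathrm{Re}(A)\end{pmatrix}$, identifying the isotropy with the stated copy of $\mathrm{U}(n)$ and giving $\mathscr{S}_n=\mathrm{Sp}(n,\mathbb{R})/\mathrm{U}(n)$.

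I expect the main obstacle to be showing that the action exhausts \emph{all} biholomorphisms rather than a merely transitive subgroup; in this route that is exactly the content of the Cayley identity, whose delicate point is the reality statement $\overline{N}=N$, resting on the nontrivial interaction $\overline{M}=KMK$ between the symplectic and unitary conditions. As an independent check that the action at least maps $\mathscr{S}_n$ into itself, and because the resulting formula will be needed for the later moment-map computations, I would also verify directly the transformation law $\mathrm{Im}\big((AZ+B)(CZ+D)^{-1}\big) = (CZ+D)^{-*}\,\mathrm{Im}(Z)\,(CZ+D)^{-1}$, together with the invertibility of $CZ+D$ obtained by testing the associated quadratic identity against a putative kernel vector $v$ and invoking $\mathrm{Im}(Z)>0$.
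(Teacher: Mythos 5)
Your proposal is correct, and the comparison here is somewhat degenerate: the paper does not prove this proposition at all, but quotes it from \cite[Exercise~C,~Chapter~VIII]{Helgason}, and the remark immediately following it records precisely the conjugacy you establish --- that $\mathrm{Sp}(n,\mathbb{C})\cap\mathrm{U}(n,n)$ and $\mathrm{Sp}(n,\mathbb{R})$ are conjugate (with a pointer to \cite{Mok}). So your Cayley-transport argument is a self-contained proof of the cited result along exactly the route the authors hint at. I verified the computations: $\mathcal{C}^\top J_n\mathcal{C}=2iJ_n$ is right, and the scalar $2i$ cancels in $N^\top J_n N$, so conjugation by the merely \emph{conformally} symplectic $\mathcal{C}$ does preserve $\mathrm{Sp}(n,\mathbb{C})$; your reality lemma $\overline{M}=KMK$ follows as claimed from $M^{-\top}=-J_nMJ_n$ and $\overline{M}=I_{n,n}M^{-\top}I_{n,n}$ via $I_{n,n}J_n=-K$ and $J_nI_{n,n}=K$; and since $\overline{\mathcal{C}}=K\mathcal{C}$ one indeed gets $\overline{N}=N$. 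The isotropy computation $\mathcal{C}^{-1}\operatorname{diag}(A,\overline{A})\,\mathcal{C}=\begin{pmatrix}\mathrm{Re}(A) & \mathrm{Im}(A)\\ -\mathrm{Im}(A) & \mathrm{Re}(A)\end{pmatrix}$ together with $\varphi(iI_n)=0$ correctly transports the quotient description, and your final check of $\mathrm{Im}\big((AZ+B)(CZ+D)^{-1}\big)=(CZ+D)^{-*}\,\mathrm{Im}(Z)\,(CZ+D)^{-1}$ and of the invertibility of $CZ+D$ via $v^*\big[(CZ+D)^*(AZ+B)-(AZ+B)^*(CZ+D)\big]v=2i\,v^*\mathrm{Im}(Z)v$ is the standard argument (and the transformation law is indeed what the later moment-map sections implicitly rely on).

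Two small points you should make explicit. First, ``reversing the computation'' for the reverse inclusion is slightly glib: reality is not the issue there, and what must be checked is that $M=\mathcal{C}N\mathcal{C}^{-1}$ lies in $\mathrm{U}(n,n)$ when $N\in\mathrm{Sp}(n,\mathbb{R})$; this needs the companion identity $\mathcal{C}^*I_{n,n}\mathcal{C}=2iJ_n$, under which $M^*I_{n,n}M=I_{n,n}$ becomes $N^\top J_nN=J_n$ for real $N$ --- a one-line computation, but a different identity from the ones you recorded. Second, converting the group identity into the statement that the $\mathrm{Sp}(n,\mathbb{R})$-action realizes \emph{all} biholomorphisms of $\mathscr{S}_n$ uses the cocycle property $g_{MN}=g_M\circ g_N$ of generalized M\"obius transformations (so that matrix conjugation by $\mathcal{C}$ corresponds to conjugation of maps by $\varphi$); the projective ambiguity in the matrix-to-map correspondence is harmless here, but it deserves a sentence. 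Neither point is a gap in the idea; the proof stands.
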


\begin{remark}
	By Proposition~\ref{propo:diffoftwodomains} it follows that the biholomorphism groups of $D^{III}_n$ and $\mathscr{S}_n$ are isomorphic. In fact, it is easy to prove that $\mathrm{Sp}(n,\mathbb{C}) \cap \mathrm{U}(n,n)$ and $\mathrm{Sp}(n,\mathbb{R})$ are conjugated (see \cite{Mok}).
\end{remark}

\subsection{Bergman spaces and Toeplitz operators}
\label{subsec:BergmanToeplitz}
From now on, $D$ will denote either of the domains $D^{III}_n$ or $\mathscr{S}_n$, and $\dif Z$ the Lebesgue measure on $\mathrm{Symm}(n,\mathbb{C})$. A number of invariants can be associated to any symmetric domain. The simplest one is the dimension, which for $D$ is $n(n+1)/2$. For other invariants we refer to \cite{Upmeier} for further details on their definitions and here we simply state their known values with some remarks.

\begin{itemize}
	\item The rank is defined as the dimension of maximal linearly embedded poly-disks. For $D$ the rank is $n$.
	\item The multiplicities are defined as the main invariants that describe the Jordan triple system associated to the symmetric domain. For $D$ the multiplicities are $a = 1$, $b = 0$. The vanishing of the latter implies that $D^{III}_n$ has a tubular realization which is in fact given by $\mathscr{S}_n$. For this we observe that
	\[
		\mathscr{S}_n = \mathrm{Symm}(n,\mathbb{R}) 
				\oplus i \mathrm{Pos}(n,\mathbb{R}),
	\]
	where $\mathrm{Pos}(n,\mathbb{R})$ denotes the cone of positive definite $n \times n$ real matrices. In the rest of this work we will denote $\Omega_n = \mathrm{Pos}(n,\mathbb{R})$.
	\item For a tubular domain, the genus is given as $p = 2d/r$, where $d$ and $r$ are the dimension and the rank of the domain, respectively. Hence, for $D$ we have $p = n+1$.
\end{itemize}

We will make use of the multi-gamma function (see \cite[Definition~2.4.2]{Upmeier}) that we will consider only for Cartan domains of type III. Such function is associated to the cone part of a tubular realization of a tube type symmetric domain. In our case, it is defined by
\[
	\Gamma_{\Omega_n}(\lambda) = 
		(2\pi)^{\frac{n(n-1)}{4}}
			\prod_{j=1}^n \Gamma\bigg(\lambda-\frac{j-1}{2}\bigg),
\]
for every $\lambda > (n-1)/2$. It is well known (see \cite{Hua,Upmeier}) that the volume of a bounded symmetric domain can be expressed in terms of the multi-gamma functions. In this case we have (see \cite{Hua})
\[
	\mathrm{Vol}(D^{III}_n) = 	
		 \frac{\pi^{\frac{n(n+1)}{2}} 
		 	\Gamma_{\Omega_n}\big(\frac{n+1}{2}\big)}%
			{\Gamma_{\Omega_n}(n+1)}.
\]
Hence, we consider the normalized measure on $\mathrm{Symm}(n,\mathbb{C})$
\[
	\dif v(Z) = 		
			\frac{\Gamma_{\Omega_n}(n+1)}%
			{\pi^{\frac{n(n+1)}{2}} \Gamma_{\Omega_n}\big(\frac{n+1}{2}\big)} \dif Z.
\]
In particular, $\dif v(Z)$ is a probability measure on $D^{III}_n$.

\begin{defi}
	The (weightless) Bergman space $\mathcal{A}^2(D)$ is the subspace of $L^2(D,v)$ that consists of holomorphic functions. In other words, we have
	\[
		\mathcal{A}^2(D)=\{f\in L^2(D,v) \mid f 
			\text{ is holomorphic }\}.
	\]
\end{defi}
It is a well known fact that $\mathcal{A}^2(D)$ is a closed subspace of $L^2(D,v)$ (see~\cite{Helgason,Upmeier}). We will denote by $B_D : L^2(D,v) \rightarrow \mathcal{A}^2(D)$ the corresponding orthogonal projection. It is called the (weightless) Bergman projection. Moreover, it is also well known that $\mathcal{A}^2(D)$ is a reproducing kernel Hilbert space (see~\cite[Chapter~VIII]{Helgason}) in the sense that the evaluation map
\begin{align*}
	\mathrm{ev}_Z : \mathcal{A}^2(D) & \longrightarrow \mathbb{C}\\ 	
	f & \longmapsto f(Z),
\end{align*}
is continuous for every  $Z\in D$. This implies the existence of a unique smooth function $K_D : D \times D \rightarrow \mathbb{C}$, holomorphic in the first variable and anti-holomorphic in the second variable, satisfying $\overline{K_D(Z,\cdot)} \in \mathcal{A}^2(D)$ for every $Z \in D$ and for which the Bergman projection is given by
\[
	B_D(f)(Z) = \int_D f(W) K_D(Z,W) \dif v(W).
\]
for every $f \in L^2(D,v)$ and $Z \in D$. The function $K_D$ is called the (weightless) Bergman kernel of $D$.

The Bergman kernels of symmetric domains have closed known expressions. In particular, it follows from Examples~2.4.17 and 2.9.15 in \cite{Upmeier} that the Bergman kernels of $D^{III}_n$ and $\mathscr{S}_n$ are given by the expressions
\begin{align}
	K_{D^{III}_n}(Z,W) &= \det(I_n-Z\overline{W})^{-(n+1)}, 
	\label{eq:KDIIIn} \\
	K_{\mathscr{S}_n}(Z,W) & =\det(-i(Z-\overline{W}))^{-(n+1)},
	\label{eq:KSn}
\end{align}
respectively. We note that a linear biholomorphism has to be applied in order to obtain the above expression for $K_{\mathscr{S}_n}$ from the one found in \cite{Upmeier}. More precisely, our unbounded realization of $D^{III}_n$ is obtained from the one considered in \cite{Upmeier} through the map $Z\mapsto -iZ$.

The next standard construction is to use powers of the Bergman kernel to obtain weighted measures. The following formula, which holds for every $\lambda > n$, is useful to normalize such weighted measures (see \cite[Lemma~2.9.18]{Upmeier})
\[
	\int_{D^{III}_n} \det(I_n-Z\overline{Z})^{\lambda-n-1} \dif Z
	 = \frac{\pi^{\frac{n(n+1)}{2}} 	
	 	\Gamma_{\Omega_n}\big(\lambda-\frac{n+1}{2}\big)}%
	 		{\Gamma_{\Omega_n}(\lambda)}.
\]
Hence, we consider for every $\lambda > n$ the measure
\[
	\dif v_\lambda(Z) =
		\frac{\Gamma_{\Omega_n}\left(\lambda\right)}%
			{\pi^{\frac{n(n+1)}{2}} \Gamma_{\Omega_n}\big(\lambda-\frac{n+1}{2}\big)}
				\det(I_n-Z\overline{Z})^{\lambda-n-1} \dif Z
\]
which is a probability measure on $D^{III}_n$, and we also consider the normalized measure
\[
	\dif \widehat{v}_\lambda(Z) =
		\frac{\Gamma_{\Omega_n}\left(\lambda\right)}%
			{\pi^{\frac{n(n+1)}{2}} \Gamma_{\Omega_n}\left(\lambda-\frac{n+1}{2}\right)}
				\det(-i(Z-\overline{Z}))^{\lambda-n-1} \dif Z.
\]
on the domain $\mathscr{S}_n$. 

\begin{defi}
	For $\lambda > n$, the weighted Bergman spaces on $D^{III}_n$ and $\mathscr{S}_n$ with weight $\lambda$ are given by
	\begin{align*}
	\mathcal{A}^2_\lambda(D^{III}_n) &= 
		\{f\in L^2(D^{III}_n,v_\lambda) \mid f 
			\text{ is holomorphic }\}, \\
	\mathcal{A}^2_\lambda(\mathscr{S}_n) &=
		\{f\in L^2(\mathscr{S}_n,\widehat{v}_\lambda) \mid 
			f \text{ is holomorphic } \},
	\end{align*}
	respectively. We will denote by $\mathcal{A}^2_\lambda(D)$ the corresponding weighted Bergman space when $D$ is $D^{III}_n$ or $\mathscr{S}_n$.
\end{defi}
Note that for $\lambda = n+1$, we obtain  $\mathcal{A}^2_{n+1}(D^{III}_n)=\mathcal{A}^2(D^{III}_n)$ and $\mathcal{A}^2_{n+1}(\mathscr{S}_n)=\mathcal{A}^2(\mathscr{S}_n)$, which are the weightless Bergman spaces.

As before, it is well known that every weighted Bergman space is closed in the corresponding $L^2$ space in such a way that it is a reproducing kernel Hilbert space. In particular, for $D$ either $D^{III}_n$ or $\mathscr{S}_n$ there exists a smooth function $K_{D,\lambda} : D \times D \rightarrow \mathbb{C}$, holomorphic and anti-holomorphic in the first and second variable (respectively), such that the orthogonal projection onto $\mathcal{A}^2_\lambda(D)$ is given by
\[
	B_{D,\lambda}(f)(Z) = \int_D f(W) K_{D,\lambda}(Z,W)
			\dif \nu_\lambda(W),
\]
for every $f \in L^2(D,\nu_\lambda)$ and $Z \in D$, where $\nu_\lambda$ denotes either $v_\lambda$ or $\widehat{v}_\lambda$ according to whether $D$ is $D^{III}_n$ or $\mathscr{S}_n$. This projection is called the weighted Bergman projection. It follows by Propositions~2.4.22 and 2.9.24 from \cite{Upmeier} that the weighted Bergman kernels for these domains are given by the following expressions
\begin{align*}
	K_{D^{III}_n,\lambda}(Z,W) 
		&= \det(I_n-Z\overline{W})^{-\lambda}, \\
	K_{\mathscr{S}_n,\lambda}(Z,W) 		
		&=\det(-i(Z-\overline{W}))^{-\lambda},
\end{align*}
for every $\lambda > n$. In particular, we have $K_{D,\lambda}(Z,W) = K_D(Z,W)^\frac{\lambda}{n+1}$ for every $Z,W \in D$.

The previous constructions allow us to define our main object of study.

\begin{defi}
	For every weight $\lambda > n$ and $a \in L^\infty(D)$, the Toeplitz operator with symbol $a$ is the bounded operator $T^{(\lambda)}_a$ acting on $\mathcal{A}^2_\lambda(D)$ that is given by $T^{(\lambda)}_a = B_{D,\lambda}\circ M_a$.
\end{defi}

It is interesting to note that the Bergman spaces $\mathcal{A}^2_\lambda(D^{III}_n)$ and $\mathcal{A}^2_\lambda(\mathscr{S}_n)$ are unitarily equivalent, thus simplifying some computations. This unitary equivalence is stated without proof in the next result. Its proof is a straightforward generalization of the arguments provided to obtain Theorem~4.9 in Chapter~IV from \cite{Range}.

\begin{teo}
	The map $\varphi$ given in Proposition~\ref{propo:diffoftwodomains} induces the unitary operator given by
	\begin{align*}
		U_\varphi:\mathcal{A}^2_\lambda(D^{III}_n) &\longrightarrow 
			\mathcal{A}^2_\lambda(\mathscr{S}_n)\\
		f &\longmapsto 
			J_\mathbb{C} (\varphi)^{\frac{\lambda}{n+1}} 	f\circ\varphi,
	\end{align*}
	where $J_\mathbb{C}(\varphi) = \det(\dif\varphi_\mathbb{C})$ denotes the complex Jacobian.
\end{teo}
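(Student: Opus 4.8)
The plan is to prove that $U_\varphi$ is a surjective isometry, and hence a unitary operator. Two ingredients are needed. The first is the change of variables formula for the biholomorphism $\varphi$: since $\varphi$ is holomorphic, its real Jacobian equals $\abs{J_\mathbb{C}(\varphi)}^2$, so that for every integrable $g$ on $D^{III}_n$ we have $\int_{D^{III}_n} g(W)\dif W = \int_{\mathscr{S}_n} g(\varphi(Z)) \abs{J_\mathbb{C}(\varphi)(Z)}^2 \dif Z$. The second, and central, ingredient is the pointwise determinant identity
\[
	\det(I_n - \varphi(Z)\overline{\varphi(Z)})^{n+1} =
		\abs{J_\mathbb{C}(\varphi)(Z)}^2 \det(-i(Z - \overline{Z}))^{n+1},
\]
valid for every $Z \in \mathscr{S}_n$. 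By \eqref{eq:KDIIIn} and \eqref{eq:KSn} this is exactly the transformation law for the weightless Bergman kernels of $\mathscr{S}_n$ and $D^{III}_n$ restricted to the diagonal, and I would establish it either by invoking the general transformation rule for Bergman kernels under biholomorphisms (the argument behind the cited Theorem~4.9 of Chapter~IV in \cite{Range}), or by a direct computation based on the fact that $J_\mathbb{C}(\varphi)(Z)$ is a constant multiple of $\det(I_n - iZ)^{-(n+1)}$.

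Granting these, I would first settle well-definedness. Since $\mathscr{S}_n$ is simply connected and $J_\mathbb{C}(\varphi)$ is nowhere zero (because $\varphi$ is biholomorphic), a holomorphic branch of $J_\mathbb{C}(\varphi)^{\frac{\lambda}{n+1}}$ exists and makes $U_\varphi f = J_\mathbb{C}(\varphi)^{\frac{\lambda}{n+1}}(f\circ\varphi)$ a holomorphic function on $\mathscr{S}_n$; the explicit form $J_\mathbb{C}(\varphi)(Z) = c_n\det(I_n - iZ)^{-(n+1)}$, with $c_n$ a nonzero constant, makes the choice of branch transparent. For the isometry I would write out $\norm{U_\varphi f}^2$ in $\mathcal{A}^2_\lambda(\mathscr{S}_n)$, substitute $W = \varphi(Z)$ using the change of variables above, and apply the determinant identity raised to the power $\frac{\lambda-n-1}{n+1}$. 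This converts the weight $\det(-i(Z-\overline{Z}))^{\lambda-n-1}$ of $\widehat{v}_\lambda$ together with the factor $\abs{J_\mathbb{C}(\varphi)}^{\frac{2\lambda}{n+1}}$ into the weight $\det(I_n - W\overline{W})^{\lambda-n-1}$ of $v_\lambda$ times the real Jacobian; since the normalizing constants of $v_\lambda$ and $\widehat{v}_\lambda$ coincide, the resulting integral is precisely $\norm{f}^2$ in $\mathcal{A}^2_\lambda(D^{III}_n)$. In particular $U_\varphi f$ is square integrable, so it indeed lies in $\mathcal{A}^2_\lambda(\mathscr{S}_n)$.

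Finally, for surjectivity I would introduce the analogous operator $U_{\varphi^{-1}}$ attached to the inverse biholomorphism $\varphi^{-1}\colon D^{III}_n \to \mathscr{S}_n$, and verify that it is a two-sided inverse of $U_\varphi$. The key point is the chain rule $J_\mathbb{C}(\varphi)(\varphi^{-1}(W))\, J_\mathbb{C}(\varphi^{-1})(W) = 1$, which forces the two Jacobian factors appearing in $U_{\varphi^{-1}}U_\varphi$ to cancel; combined with the isometry property this shows that $U_\varphi$ is unitary. I expect the main obstacle to be precisely this last cancellation together with the determinant identity: one must check that the holomorphic branches selected for the powers in $U_\varphi$ and $U_{\varphi^{-1}}$ are compatible, so that their product is genuinely the constant function $1$ rather than a nontrivial root of unity. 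This is resolved by fixing both branches through the explicit determinant expressions, normalizing at the base points $iI_n$ and $0$ (note $\varphi(iI_n)=0$), and appealing to connectedness of the domains.
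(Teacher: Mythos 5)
Your proposal is correct and takes essentially the same route as the paper, which states the theorem without proof precisely because the argument is the standard one behind Theorem~4.9 of Chapter~IV in \cite{Range} that the authors cite: the Bergman kernel transformation law restricted to the diagonal (your determinant identity, with $J_\mathbb{C}(\varphi)(Z)$ a constant multiple of $\det(I_n-iZ)^{-(n+1)}$), holomorphic change of variables with real Jacobian $\abs{J_\mathbb{C}(\varphi)}^2$, and branch bookkeeping on the simply connected domains. One cosmetic remark: since $\lambda/(n+1)$ need not be rational, the ambiguity constant in $U_{\varphi^{-1}}U_\varphi$ is a priori only a unimodular constant rather than a root of unity, but your normalization at the base points resolves this, and in fact $U_{\varphi^{-1}}U_\varphi = cI$ with $\abs{c}=1$ already suffices for bijectivity and hence unitarity.
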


\section{Geometry of Cartan domains of type III}
\label{sec:CartanIIIGeometry}

\subsection{Symplectic and Kähler geometry}
We discuss here some basic material from symplectic geometry, which will be essential for the main results of this work. 

\begin{defi}
	A symplectic manifold is a pair $(M,\omega)$, where $M$ is a smooth manifold and $\omega$ is a closed $2$-form which yields a non-degenerate bilinear form at every point.
\end{defi}

Some of the most important examples of symplectic manifolds come from complex differential geometry. We recall that a manifold $M$ is complex if their charts map onto open sets of complex vector spaces so that the changes of coordinates are holomorphic. For such a manifold $M$, this yields a complex structure $J_z$ on every tangent space $T_z M$, for every $z \in M$. In turn, this provides a tensor field $J$ known as the complex structure tensor of $M$. In particular, we have $J^2 = -I$ the negative of the identity tensor acting on the fibers of the tangent bundle $TM$. We refer to \cite{Mok} for further details. 

The next definition describes well behaved Riemannian metrics with respect to these constructions.

\begin{defi}
	Let $M$ be a complex manifold with complex structure tensor $J$ and a given Riemannian metric $g$. We say that $M$ is a Hermitian manifold if it satisfies
	\[
		g_z(J_zu,J_zv)=g_z(u,v)
	\]
	for every $z \in M$ and $u,v \in T_z M$.
\end{defi}

We now proceed to relate Hermitian manifolds to symplectic geometry. We will explain the main constructions and refer to \cite{Mok} for further details. Let us start by considering a complex manifold $M$ with complex structure tensor $J$. Then, the tangent bundle can be complexified to a complex tangent bundle denoted by $T^\mathbb{C} M$, and the action of $J$ on $TM$ can also be complexified to obtain a tensor $J^\mathbb{C}$ acting on $T^\mathbb{C} M$. Such complexifications are performed fiberwise.

Since $(J_z^{\mathbb{C}})^2=-I$, for every $z \in M$, if we define the spaces
\begin{align*}
	T_z^{1,0}M &= \{v\in T_z^{\mathbb{C}}M 
		\mid J_z^{\mathbb{C}}v=iv\}, \\
	T_z^{0,1}M &= \{v\in T_z^{\mathbb{C}}M 
		\mid J_z^{\mathbb{C}}v=-iv\},
\end{align*}
then we have $T_z^{\mathbb{C}}M = T_z^{1,0}M \oplus T_z^{0,1}M$. These spaces are known as the subspaces of holomorphic and anti-holomorphic tangent vectors. If $(z_1, \dots, z_n)$ is a holomorphic chart with real components obtained from the decomposition $z_j=x_j+iy_j$, then the usual Wirtinger differential operators are given by
\[
	\frac{\partial}{\partial z_j}
		= \frac{1}{2} \left(
			\frac{\partial}{\partial x_j} 
			- i\frac{\partial}{\partial y_j}\right), \quad
	\frac{\partial}{\partial \overline{z}_j} 
		= \frac{1}{2}\left(\frac{\partial}{\partial x_j}
			+ i\frac{\partial}{\partial y_j}\right),
\]
for every $j = 1, \dots, n$. The first set of operators define at every point in the domain of the chart a basis for the corresponding fibers of $T^{1,0} M$. Similarly, the second set of operators define a basis for the fibers of $T^{0,1} M$. The corresponding dual basis are given by
\[
	\dif z_j = \dif x_j + i \dif y_j, \quad
	\dif \overline{z}_j = \dif x_j - i \dif y_j,
\]
where $j = 1, \dots, n$.

Let us now consider a Riemannian metric $g$ on $M$ for which $M$ is a Hermitian manifold. We can complexify $g$ to a complex bilinear tensor $g^\mathbb{C}$ defined on the complexified tangent bundle $T^\mathbb{C} M$. This yields a positive definite Hermitian form
\begin{align*}
		T_z^{1,0}M \times T_z^{1,0}M 
			&\longrightarrow\mathbb{C}, \\
		(u,v) &\longmapsto g_z^\mathbb{C}(u,\overline{v}),
\end{align*}
for every $z \in M$. In local coordinates, this can be written as
\[
	\sum_{j,k=1}^n g_{jk}(z)dz_j\otimes d\overline{z}_k.
\]
For this reason, we will denote this field of complex Hermitian forms with the same symbol $g$. To more easily distinguish between the two of them, we will refer to the original $g$ as the Riemannian metric of $M$ and we will call the previous field of Hermitian forms the Hermitian metric of $M$.

The previous setup and constructions allow to introduce the next important geometric object.

\begin{defi}\label{defi:Kahler}
	For a Hermitian manifold $M$ with Hermitian metric $g$ as constructed above, the associated $2$-form is given by
	\[
		\omega = g(J(\cdot), \cdot) = -2 \mathrm{Im}(g)
	\]
	where the first occurrence of $g$ is the Riemannian metric and the second one is the corresponding Hermitian metric. The Hermitian manifold $M$ is called K\"ahler if its associated $2$-form is closed. In this case, $\omega$ is called the K\"ahler form of $M$.
\end{defi}

It is straightforward to check that the associated $2$-form of any Hermitian manifold is non-degenerate. Hence, every K\"ahler manifold is a symplectic manifold, and in this case the K\"ahler form is its symplectic form.

One can alternatively provide a K\"ahler structure on a complex manifold by introducing a field of Hermitian bilinear forms. This is the content of the next result which is a particular case of Proposition~1 in page~18 from \cite{Mok}.

\begin{propo}\label{propo:KahlerFromHermitian}
	Let $M$ be a complex manifold and let $g$ be a tensor field of positive definite Hermitian bilinear forms on $T^{1,0} M$. Assume that for every holomorphic coordinate chart $(z_1, \dots, z_n)$, in a family of charts covering $M$, there is some real valued function $F$ such that
	\[
		g = \sum_{j,k=1}^n 
			\frac{\partial^2 F}{\partial z_j \partial \overline{z}_k}
				\dif z_j \otimes \dif \overline{z}_k
	\]
	in the domain of the given chart. Then, the tensor $2\mathrm{Re}(g)$ is a Riemannian metric that yields a K\"ahler structure on $M$ whose Hermitian metric is given by $g$.
	
\end{propo}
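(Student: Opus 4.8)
The plan is to verify the three assertions of Proposition~\ref{propo:KahlerFromHermitian} in turn: first that $2\mathrm{Re}(g)$ is a genuine Riemannian metric, second that the complex structure $J$ is compatible with it (so that $M$ becomes a Hermitian manifold whose Hermitian metric recovers the given $g$), and third that the associated $2$-form is closed. Throughout I would work in a single holomorphic chart $(z_1,\dots,z_n)$ from the given covering family, writing $g_{jk} = \partial^2 F/\partial z_j \partial \overline{z}_k$, and exploit the local potential $F$ heavily; the fact that $F$ is real-valued forces $g_{kj} = \overline{g_{jk}}$, which is exactly the Hermitian symmetry we need and which I would use repeatedly.

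First I would unpack what $2\mathrm{Re}(g)$ means as a tensor on the real tangent bundle. Using the Wirtinger relations $\partial/\partial x_j = \partial/\partial z_j + \partial/\partial \overline{z}_j$ and $\partial/\partial y_j = i(\partial/\partial z_j - \partial/\partial \overline{z}_j)$ recorded in the excerpt, I would expand $g(u,\overline{v})$ for real tangent vectors $u,v$ and take twice the real part, obtaining an explicit expression in the real coordinates $x_j,y_j$. Because $g$ is a positive definite Hermitian form on $T^{1,0}M$ by hypothesis, evaluating $2\mathrm{Re}(g)(u,u) = 2g(u^{1,0}, \overline{u^{1,0}}) > 0$ for $u \neq 0$ (where $u^{1,0}$ is the $(1,0)$-part of the complexification of $u$) gives positive-definiteness and symmetry, so $2\mathrm{Re}(g)$ is a Riemannian metric. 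The Hermitian compatibility $g(J\cdot,J\cdot) = g(\cdot,\cdot)$ then follows because $J$ acts as multiplication by $i$ on $T^{1,0}M$ and by $-i$ on $T^{0,1}M$, so the two factors of $\pm i$ cancel in the Hermitian pairing; this identifies the Hermitian metric associated to $2\mathrm{Re}(g)$ with the given $g$, as claimed.

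The substantive step is closedness of the associated $2$-form $\omega = -2\mathrm{Im}(g)$. Here I would pass to the $(1,0)$/$(0,1)$ formalism and show that in the chart
\[
	\omega = i \sum_{j,k=1}^n g_{jk}\, \dif z_j \wedge \dif \overline{z}_k
		= i \sum_{j,k=1}^n
			\frac{\partial^2 F}{\partial z_j \partial \overline{z}_k}\,
			\dif z_j \wedge \dif \overline{z}_k,
\]
after checking that the imaginary part of the Hermitian tensor $\sum g_{jk}\,\dif z_j \otimes \dif \overline{z}_k$ is proportional to this $(1,1)$-form (the symmetric real part contributes nothing to the antisymmetrization). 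The key observation is then that this is exactly $i\partial\overline{\partial}F$, and since $\dif = \partial + \overline{\partial}$ with $\partial^2 = \overline{\partial}^2 = 0$ and $\partial\overline{\partial} = -\overline{\partial}\partial$, one computes $\dif(i\partial\overline{\partial}F) = i(\partial + \overline{\partial})\partial\overline{\partial}F = 0$ term by term. A final remark is needed: the potentials $F$ on different charts need not agree, but $\omega$ is defined globally and intrinsically via $-2\mathrm{Im}(g)$, so closedness, being a local condition verified in every chart of the covering family, holds globally regardless of how the $F$'s patch.

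The main obstacle I anticipate is purely bookkeeping rather than conceptual: correctly tracking the factors of $i$, the $1/2$'s from the Wirtinger operators, and the sign conventions so that $-2\mathrm{Im}(g)$ matches $i\partial\overline{\partial}F$ with the normalization implicit in Definition~\ref{defi:Kahler}. I would be careful to confirm that the convention $\omega = g(J(\cdot),\cdot) = -2\mathrm{Im}(g)$ used in the excerpt is consistent with the $i\partial\overline{\partial}F$ expression, adjusting constants as needed; once the $(1,1)$-form representation is pinned down, the closedness is immediate from $\dif = \partial + \overline{\partial}$ and nilpotency.
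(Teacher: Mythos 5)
Your proposal is correct, but note that the paper itself contains no proof of Proposition~\ref{propo:KahlerFromHermitian}: it is invoked as a particular case of Proposition~1 on page~18 of \cite{Mok}. So your argument cannot be compared with an in-paper proof; what it supplies is the standard self-contained demonstration, and it does so correctly. Your route --- positivity and symmetry of $2\mathrm{Re}(g)$ from $2\mathrm{Re}(g)(u,u)=2g(u^{1,0},u^{1,0})>0$ and from the Hermitian symmetry $g_{kj}=\overline{g_{jk}}$ (which indeed follows from $F$ being real-valued); $J$-compatibility from the cancellation of $i$ and $-i$ in the Hermitian pairing; closedness from identifying the associated $2$-form with $i\partial\overline{\partial}F$ --- is essentially the argument behind the cited result, so what your version buys is independence from the reference. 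Your anticipated bookkeeping does close up with the paper's conventions: evaluating $i\sum_{j,k}g_{jk}\,\dif z_j\wedge\dif\overline{z}_k$ on real vectors $u,v$ gives $i\big(g(u^{1,0},v^{1,0})-\overline{g(u^{1,0},v^{1,0})}\big)=-2\,\mathrm{Im}\,g(u^{1,0},v^{1,0})$, where the Hermitian symmetry is used to rewrite the second term; this matches $\omega=-2\,\mathrm{Im}(g)$ of Definition~\ref{defi:Kahler} exactly, with no stray constants, and then $\dif\omega=i(\partial+\overline{\partial})\partial\overline{\partial}F=0$ by $\partial^2=\overline{\partial}^2=0$ and anticommutativity. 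The one step you assert rather than execute is that the Hermitian metric canonically attached to the Riemannian metric $2\mathrm{Re}(g)$ via the paper's complexification construction is the given $g$ itself, not merely some Hermitian form: $J$-invariance alone guarantees existence, and recovering $g$ requires the short verification $(2\mathrm{Re}\,g)^{\mathbb{C}}\big(\partial/\partial z_j,\partial/\partial\overline{z}_k\big)=g_{jk}$, obtained by expanding the Wirtinger operators exactly as you set up; this is routine bookkeeping of the kind you flagged, not a gap. Your closing remark --- that the local potentials $F$ need not patch across charts, while closedness is a local condition on the globally and intrinsically defined $\omega$ --- is precisely the right way to handle the covering family.
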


\subsection{The Bergman metric and its Kähler form}
\label{subsec:BergmanmetricKahler}
We now use the results previously obtained to construct a K\"ahler structure on the Cartan domains of type III. The next fundamental theorem is a particular case of the discussion in the first part of Chapter~4 in \cite{Mok} (see also~\cite{Helgason,Range}). Note that, from now on, we will use the canonical complex linear coordinates of $\mathrm{Symm}(n,\mathbb{C})$.

\begin{teo}\label{teo:BergmanMetric}
	Let $D$ be either of $D^{III}_n$ or $\mathscr{S}_n$ and let $K_D(Z,W)$ be the reproducing Bergman kernel of $D$. Then, the tensor given~by
	\[
		\sum_{\substack{1 \leq l \leq m \leq n \\ 
				1 \leq j \leq k \leq n}}
			\frac{\partial^2\log K_D(Z,Z)}%
			{\partial z_{lm}\partial \overline{z}_{jk}}
			\dif z_{lm} \otimes \dif\overline{z}_{jk},
	\]
	is a field of positive definite Hermitian forms that yields a structure of K\"ahler manifold on $D$. Furthermore, both the corresponding Riemannian metric and associated K\"ahler form are invariant under the group of biholomorphisms.
\end{teo}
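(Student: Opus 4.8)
The plan is to invoke Proposition~\ref{propo:KahlerFromHermitian} with the potential $F(Z) = \log K_D(Z,Z)$, and then to deduce the biholomorphic invariance from the transformation law of the Bergman kernel. I break the argument into three parts: first verify the positive-definiteness so that the stated tensor is indeed a Hermitian metric; second apply the cited proposition to obtain the Kähler structure; and third establish the invariance of both the Riemannian metric and the associated Kähler form.

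\medskip

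For the first two parts, I would argue as follows. The coordinates $z_{lm}$ with $1 \leq l \leq m \leq n$ are precisely the canonical complex linear coordinates on $\mathrm{Symm}(n,\mathbb{C})$, so they form a single global holomorphic chart covering all of $D$. Setting
\[
	F(Z) = \log K_D(Z,Z),
\]
which is real valued since $K_D(Z,Z) > 0$ (a reproducing kernel is positive on the diagonal), the displayed tensor is exactly
\[
	\sum_{\substack{1 \leq l \leq m \leq n \\ 1 \leq j \leq k \leq n}}
		\frac{\partial^2 F}{\partial z_{lm} \partial \overline{z}_{jk}}
			\dif z_{lm} \otimes \dif \overline{z}_{jk}.
\]
Thus Proposition~\ref{propo:KahlerFromHermitian} applies verbatim once I know the Hermitian form is positive definite. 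For this I would use the explicit kernels \eqref{eq:KDIIIn} and \eqref{eq:KSn}: for $D^{III}_n$ one has $F(Z) = -(n+1)\log\det(I_n - Z\overline{Z})$, and for $\mathscr{S}_n$ one has $F(Z) = -(n+1)\log\det(-i(Z-\overline{Z}))$. The positivity of the complex Hessian is the standard statement that $-\log K_D(Z,Z)$ is strictly plurisubharmonic, equivalently that the Bergman metric is positive definite; this is exactly the content of the general theory in the first part of Chapter~4 of \cite{Mok}, so I may quote it. The proposition then yields that $2\,\mathrm{Re}(g)$ is a Riemannian metric giving $D$ a Kähler structure with Hermitian metric $g$.

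\medskip

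For the invariance, let $\psi$ be any biholomorphism of $D$, realized by an element of the relevant group (Proposition~\ref{propo:biholoDIIIn} or~\ref{propo:biholoSn}). The transformation rule for the Bergman kernel gives
\[
	K_D(\psi(Z),\psi(Z)) =
		\abs{J_\mathbb{C}(\psi)(Z)}^{-2}\, K_D(Z,Z),
\]
where $J_\mathbb{C}(\psi)$ is the complex Jacobian of $\psi$. Taking logarithms,
\[
	F \circ \psi = F
		- \log J_\mathbb{C}(\psi) - \overline{\log J_\mathbb{C}(\psi)},
\]
so $\psi^* F$ differs from $F$ by the sum of a holomorphic and an antiholomorphic function. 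Since $\partial^2/\partial z_{lm}\partial \overline{z}_{jk}$ annihilates any purely holomorphic or purely antiholomorphic function, the complex Hessian of $F$ is unchanged by $\psi$. Hence the Hermitian metric $g$ is $\psi$-invariant, and therefore so are its real part $2\,\mathrm{Re}(g)$ (the Riemannian metric) and the associated $2$-form $\omega = g(J(\cdot),\cdot)$ (the Kähler form), because $\psi$ is holomorphic and thus commutes with the complex structure tensor $J$.

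\medskip

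The main obstacle I expect is a careful justification of positive definiteness, since the proposition I am quoting presupposes it: one must either cite the general strict plurisubharmonicity of $-\log K_D$ for bounded domains, or verify it directly by differentiating the determinant expressions for the two kernels. The verification via the explicit formulas is a routine but somewhat involved matrix computation (differentiating $\log\det$ of a matrix-valued function and checking the resulting form is positive), so I would keep the exposition at the level of citing \cite{Mok} rather than grinding through it. The invariance step, by contrast, is clean once the Jacobian transformation law is in hand, since it reduces to the pluriharmonicity of $\log J_\mathbb{C}(\psi)$.
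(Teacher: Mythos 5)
Your proposal is essentially correct, but note that the paper does not prove this theorem at all: it is quoted as ``a particular case of the discussion in the first part of Chapter~4 in \cite{Mok}'' (with \cite{Helgason,Range} as auxiliary references). What you have written is precisely the standard argument hiding behind that citation: take the global potential $F(Z)=\log K_D(Z,Z)$ on the single chart given by the linear coordinates of $\mathrm{Symm}(n,\mathbb{C})$, apply Proposition~\ref{propo:KahlerFromHermitian} to get the K\"ahler property, and derive invariance from the transformation law $K_D(\psi(Z),\psi(Z))=|J_\mathbb{C}(\psi)(Z)|^{-2}K_D(Z,Z)$, since the pluriharmonic correction $\log J_\mathbb{C}(\psi)+\overline{\log J_\mathbb{C}(\psi)}$ is killed by the mixed Wirtinger Hessian. (One pedantic point: $\log J_\mathbb{C}(\psi)$ is only locally defined via a branch choice, but the argument is local and the Hessian is branch-independent, so this is harmless.) The invariance step is clean and matches how the cited sources argue.

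Two caveats on the positivity step. First, a sign slip: the strictly plurisubharmonic function is $\log K_D(Z,Z)$ itself, not $-\log K_D(Z,Z)$; equivalently, $-\log\det(I_n-Z\overline{Z})$ is strictly plurisubharmonic on $D^{III}_n$. Since you immediately rephrase the claim as positive definiteness of the Bergman metric, this reads as a typo, but it should be corrected. Second, the general positivity theory you quote from \cite{Mok} concerns \emph{bounded} domains, so it applies verbatim only to $D^{III}_n$; for the unbounded domain $\mathscr{S}_n$ you must either transfer positivity through the biholomorphism $\varphi$ of Proposition~\ref{propo:diffoftwodomains} (using the same kernel transformation law, now between two different domains), or verify it from the explicit kernel \eqref{eq:KSn}. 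The latter is in fact immediate from the formula the paper derives in Theorem~\ref{teo:BergmanMetricFormula}: writing $A=\mathrm{Im}(Z)^{-1/2}$ and $B=AUA$ (symmetric), one gets $g_Z^{\mathscr{S}_n}(U,U)=\mathrm{tr}(B\overline{B})=\sum_{j,k}|B_{jk}|^2>0$ for $U\neq 0$. With these two repairs your argument is a complete and faithful reconstruction of the proof the paper delegates to the literature.
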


We use Theorem~\ref{teo:BergmanMetric} to introduce K\"ahler structures on $D^{III}_n$ and $\mathscr{S}_n$ by normalizing the tensor considered in its statement. These normalization will simplify some formulas below.

\begin{defi}
	Let $D$ be either of $D^{III}_n$ or $\mathscr{S}_n$ and $K_D(Z,W)$ the Bergman kernel of $D$. The Bergman metric of $D$ is the field of Hermitian forms given~by
	\[
		g_D = 
		c_D \sum_{\substack{1 \leq l \leq m \leq n \\ 
				1 \leq j\leq k \leq n}}
			\frac{\partial^2\log K_D(Z,Z)}%
				{\partial z_{lm}\partial \overline{z}_{jk}}
					dz_{lm}\otimes d\overline{z}_{jk},
	\]
	where $c_{D^{III}_n} = \frac{1}{n+1}$ and $c_{\mathscr{S}_n} = \frac{4}{n+1}$.
\end{defi}

The next two results are very well known properties of the Wirtinger differential operators that will be useful in this work. We state them for the sake of completeness.

\begin{lema}
	For any smooth function $f : \mathbb{C}^N \longrightarrow \mathbb{C}$ we have
	\begin{align*}
		\dif f = 
		\sum_{j=1}^N \bigg(
		\frac{\partial f}{\partial 	z_j} \dif z_j
		+ \frac{\partial f}{\partial \overline{z}_j} 
		\dif \overline{z}_j\bigg).
	\end{align*}
\end{lema}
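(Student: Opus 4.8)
The plan is to reduce the claim to the standard formula for the exterior derivative of a smooth function on $\mathbb{R}^{2N}$ and then rewrite it in complex notation using the definitions of the Wirtinger operators and their dual forms recalled just above. First I would write $z_j = x_j + i y_j$, so that $f$ becomes a smooth function of the $2N$ real coordinates $x_1, \dots, x_N, y_1, \dots, y_N$. For such a function the exterior derivative is given by the familiar expression $\dif f = \sum_{j=1}^N \big( \frac{\partial f}{\partial x_j} \dif x_j + \frac{\partial f}{\partial y_j} \dif y_j \big)$, which I would take as the starting point.

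Next I would substitute the definitions $\frac{\partial}{\partial z_j} = \frac{1}{2}\big(\frac{\partial}{\partial x_j} - i\frac{\partial}{\partial y_j}\big)$ and $\frac{\partial}{\partial \overline{z}_j} = \frac{1}{2}\big(\frac{\partial}{\partial x_j} + i\frac{\partial}{\partial y_j}\big)$, together with $\dif z_j = \dif x_j + i\,\dif y_j$ and $\dif \overline{z}_j = \dif x_j - i\,\dif y_j$, directly into the right-hand side of the claimed identity. Expanding a single summand, the product $\frac{\partial f}{\partial z_j}\dif z_j$ produces four terms and $\frac{\partial f}{\partial \overline{z}_j}\dif \overline{z}_j$ produces four more. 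The key observation is that the mixed terms pairing $\dif y_j$ with $\frac{\partial f}{\partial x_j}$ and $\dif x_j$ with $\frac{\partial f}{\partial y_j}$ occur with opposite signs in the two products and therefore cancel, while the remaining diagonal contributions combine to give exactly $\frac{\partial f}{\partial x_j}\dif x_j + \frac{\partial f}{\partial y_j}\dif y_j$.

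Summing over $j$ then recovers the real expression for $\dif f$, which establishes the identity. There is no genuine obstacle here: the statement is a purely formal consequence of the definitions of the Wirtinger operators and the dual coframe, and the only point requiring care is the bookkeeping of the factors of $i$ that makes the cross terms visibly cancel.
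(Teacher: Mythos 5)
Your proof is correct: expanding $\frac{\partial f}{\partial z_j}\dif z_j + \frac{\partial f}{\partial \overline{z}_j}\dif \overline{z}_j$ via the definitions of the Wirtinger operators and the dual forms $\dif z_j$, $\dif\overline{z}_j$ does make the cross terms in $i$ cancel and recovers $\frac{\partial f}{\partial x_j}\dif x_j + \frac{\partial f}{\partial y_j}\dif y_j$, which summed over $j$ is the real exterior derivative. The paper states this lemma without proof, citing it as a well-known property of the Wirtinger operators, and your argument is exactly the standard verification that would be supplied.
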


\begin{lema}[Chain rule for Wirtinger derivatives]
\label{lem:ChainRuleWirtinger}
	Let $g : \mathbb{C}^n \rightarrow \mathbb{C}^m$ and $f : \mathbb{C}^m \rightarrow \mathbb{C}$ be smooth functions. Then, we have
	\begin{align*}
		\frac{\partial(f\circ g)}{\partial z_j}
		&= \sum_{k=1}^m \bigg(\frac{\partial f}{\partial z_k}\circ g 
		\frac{\partial g_k}{\partial z_j}
		+\frac{\partial f}{\partial \overline{z}_k}\circ g \frac{\partial \overline{g}_k}{\partial z_j}\bigg), \\
		\frac{\partial(f\circ g)}{\partial \overline{z}_j}
		&= \sum_{k=1}^{m} \bigg(\frac{\partial f}{\partial z_k}\circ g
		\frac{\partial g_k}{\partial \overline{z}_j}
		+\frac{\partial f}{\partial \overline{z}_k}\circ g
		\frac{\partial \overline{g}_k}{\partial \overline{z}_j}\bigg).
	\end{align*}
\end{lema}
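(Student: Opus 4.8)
The plan is to obtain both formulas at once from the total--differential lemma stated just above, combined with the naturality of the exterior derivative under smooth maps. Throughout, one must keep in mind the (deliberate) notational overload: in $\frac{\partial f}{\partial z_k}$ and $\frac{\partial f}{\partial \overline{z}_k}$ the index $k$ refers to the target variables of $f$ on $\mathbb{C}^m$, whereas in $\frac{\partial g_k}{\partial z_j}$ the index $j$ refers to the source variables on $\mathbb{C}^n$. The essential point is that the real total differential satisfies the chain rule $\dif(f\circ g) = g^{*}(\dif f)$, which is simply the classical multivariable chain rule on $\mathbb{R}^{2n}$ and $\mathbb{R}^{2m}$ repackaged in terms of the Wirtinger coframe; all remaining work is bookkeeping.

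First I would apply the preceding lemma to $f$ on $\mathbb{C}^m$ to write $\dif f = \sum_{k=1}^m\big(\frac{\partial f}{\partial z_k}\,\dif z_k + \frac{\partial f}{\partial\overline{z}_k}\,\dif\overline{z}_k\big)$, and then pull this back along $g$. Since pullback sends the coordinate functions $z_k,\overline{z}_k$ to $g_k,\overline{g}_k$ and commutes with $\dif$, this yields
\[
	\dif(f\circ g) = \sum_{k=1}^m \left(
		\bigg(\frac{\partial f}{\partial z_k}\circ g\bigg)\dif g_k
		+ \bigg(\frac{\partial f}{\partial \overline{z}_k}\circ g\bigg)\dif \overline{g}_k
	\right).
\]
Next I would expand each $\dif g_k$ and $\dif\overline{g}_k$ by the same lemma, now on $\mathbb{C}^n$, namely $\dif g_k = \sum_{j}\big(\frac{\partial g_k}{\partial z_j}\dif z_j + \frac{\partial g_k}{\partial\overline{z}_j}\dif\overline{z}_j\big)$ and similarly for $\dif\overline{g}_k$. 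Substituting these in and regrouping, the coefficient of $\dif z_j$ is exactly $\sum_k\big(\frac{\partial f}{\partial z_k}\circ g\,\frac{\partial g_k}{\partial z_j} + \frac{\partial f}{\partial\overline{z}_k}\circ g\,\frac{\partial\overline{g}_k}{\partial z_j}\big)$, and the coefficient of $\dif\overline{z}_j$ is the analogous sum with $z_j$ replaced by $\overline{z}_j$. On the other hand, applying the lemma directly to the composite $f\circ g$ gives $\dif(f\circ g) = \sum_j\big(\frac{\partial(f\circ g)}{\partial z_j}\dif z_j + \frac{\partial(f\circ g)}{\partial\overline{z}_j}\dif\overline{z}_j\big)$. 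Because the forms $\{\dif z_j,\dif\overline{z}_j\}_{j}$ are pointwise linearly independent over $\mathbb{C}$, I would equate the coefficients of $\dif z_j$ and of $\dif\overline{z}_j$ in the two expressions to read off the two asserted identities.

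The only genuine content beyond formal manipulation is the chain rule $\dif(f\circ g)=g^{*}(\dif f)$; once this is granted, the argument is purely algebraic. Accordingly, I expect the main (very mild) obstacle to be purely notational bookkeeping: carefully tracking which index ranges over the source and which over the target, and handling the conjugated terms, where one may use $\frac{\partial\overline{g}_k}{\partial z_j} = \overline{\big(\frac{\partial g_k}{\partial\overline{z}_j}\big)}$ if convenient. As an alternative that avoids invoking naturality of $\dif$, one could instead start from the definitions $\frac{\partial}{\partial z_j} = \frac12(\frac{\partial}{\partial x_j} - i\frac{\partial}{\partial y_j})$, apply the ordinary real chain rule to $f\circ g$ with respect to $x_j$ and $y_j$, and then convert the resulting real partial derivatives of $f$ in $\mathrm{Re}(g_k),\mathrm{Im}(g_k)$ back into the Wirtinger derivatives $\frac{\partial f}{\partial z_k},\frac{\partial f}{\partial\overline{z}_k}$; this route is more computational but entirely elementary.
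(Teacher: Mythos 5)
Your proof is correct. Note that the paper itself offers no argument for this lemma: it is stated (together with the total-differential formula preceding it) as a well-known property of the Wirtinger operators, ``for the sake of completeness,'' so there is no in-paper proof to compare against. Your pullback argument is the standard one and is complete as written: the identity $\dif(f\circ g)=g^{*}(\dif f)$ is the naturality of the exterior derivative (equivalently, the real chain rule on $\mathbb{R}^{2n}$ and $\mathbb{R}^{2m}$), the pullback does send the coordinate functions $z_k,\overline{z}_k$ to $g_k,\overline{g}_k$, and the final coefficient comparison is legitimate because $\{\dif z_j,\dif\overline{z}_j\}_j$ is a pointwise basis of the complexified cotangent space, hence linearly independent over $\mathbb{C}$. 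Your fallback route via $\frac{\partial}{\partial z_j}=\frac{1}{2}\big(\frac{\partial}{\partial x_j}-i\frac{\partial}{\partial y_j}\big)$ and the real chain rule would work equally well and is the more common textbook verification; the pullback formulation merely packages that computation more efficiently.
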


The following elementary computation will be used latter on. We provide its proof for the sake of completeness.

\begin{lema}\label{lem:derdet}
	The differential of $\det : \mathrm{Mat}(n,\mathbb{C}) \rightarrow \mathbb{C}$ is given~by
	\begin{align*}
		\dif\;(\det)_A = 
		\mathrm{tr}\left(\mathrm{adj}(A)\mathrm{d}A\right), 
	\end{align*}
	for every $A \in \mathrm{Mat}(n,\mathbb{C})$, where $\mathrm{adj}(A)$ (adjugate of $A$) is the transpose of the cofactor matrix of $A$.
\end{lema}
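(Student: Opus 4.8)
The plan is to compute the differential directly from the multilinearity of the determinant in the columns of its argument, and then to recognize the resulting expression as a trace involving the adjugate. Throughout, I identify $\dif(\det)_A$ with the linear map sending an increment $H \in \mathrm{Mat}(n,\mathbb{C})$ (playing the role of $\mathrm{d}A$) to $\frac{d}{dt}\big|_{t=0}\det(A+tH)$; this is legitimate since $\det$ is a polynomial in the entries of its argument and hence smooth.

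First I would write $A = (a_1, \dots, a_n)$ in terms of its columns and use the fact that $\det$ is linear in each column separately. Applying the Leibniz rule to the curve $t \mapsto \det(a_1 + t h_1, \dots, a_n + t h_n)$, where $h_k$ denotes the $k$-th column of $H$, gives
\[
	\dif(\det)_A(H) = \sum_{k=1}^n
		\det(a_1, \dots, a_{k-1}, h_k, a_{k+1}, \dots, a_n),
\]
so that only the $k$-th column is replaced by the corresponding column of $H$ in the $k$-th summand. Next I would expand each of these determinants along its $k$-th column. Since the cofactor $C_{ik} = (-1)^{i+k} M_{ik}$ of $A$ (with $M_{ik}$ the corresponding minor) does not involve any entry of the $k$-th column, the Laplace expansion yields $\det(a_1, \dots, h_k, \dots, a_n) = \sum_{i=1}^n h_{ik} C_{ik}$. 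Summing over $k$ produces $\dif(\det)_A(H) = \sum_{i,k} C_{ik} h_{ik}$.

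Finally I would match this double sum with the trace. Recalling that $\mathrm{adj}(A)$ is by definition the transpose of the cofactor matrix, so that $(\mathrm{adj}(A))_{ki} = C_{ik}$, one computes
\[
	\mathrm{tr}(\mathrm{adj}(A) H)
		= \sum_{k,i} (\mathrm{adj}(A))_{ki}\, h_{ik}
		= \sum_{i,k} C_{ik}\, h_{ik},
\]
which coincides with the expression obtained above. The only delicate point is the index bookkeeping around the transpose in the definition of the adjugate, together with the placement of indices in the trace of a product; once that is handled carefully, the identity $\dif(\det)_A = \mathrm{tr}(\mathrm{adj}(A)\,\mathrm{d}A)$ follows at once and, since no step uses $\det(A) \neq 0$, it holds for every $A \in \mathrm{Mat}(n,\mathbb{C})$. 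I note in passing that an alternative route would treat first the dense open set of invertible matrices via $\mathrm{adj}(A) = \det(A) A^{-1}$ and the expansion $\det(I_n + tB) = 1 + t\,\mathrm{tr}(B) + O(t^2)$, extending to all $A$ by continuity; the direct argument above is preferable precisely because it avoids this density step.
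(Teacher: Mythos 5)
Your proof is correct and takes essentially the same approach as the paper's: both rest on the Laplace (cofactor) expansion along the $k$-th column, using that the cofactors $C_{ik}$ do not involve the entries of that column, and then perform the same index bookkeeping to identify $\sum_{i,k} C_{ik} h_{ik}$ with $\mathrm{tr}(\mathrm{adj}(A)H)$ via $(\mathrm{adj}(A))_{ki} = C_{ik}$. The only cosmetic difference is that you reach this through column multilinearity and a directional derivative, whereas the paper reads off $\frac{\partial \det}{\partial a_{jk}}(A) = c_{jk}$ directly from the expansion and assembles $\dif\,(\det)_A = \sum_{j,k} c_{jk}\,\dif a_{jk}$.
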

\begin{proof}
	If $A=(a_{lm}) \in \mathrm{Mat}(n,\mathbb{C})$ and $c_{lm}$ is the cofactor of $a_{lm}$, then the cofactor expansion of the determinant along the $k$-th column is given by
	\begin{align*}
	\det A = 
		\sum_{l=1}^{n} c_{lk}a_{lk} =
		 \sum_{l=1}^{n}\Big(\mathrm{adj}(A)^T\Big)_{lk} a_{lk}.
	\end{align*}
	It follows that 
	\[
	\frac{\partial\det}{\partial a_{jk}}(A)
		= c_{jk} = \Big(\mathrm{adj}(A)^T\Big)_{jk}
	\]
	and we obtain the differential
	\begin{align*}
		\dif\;(\det)_A 
		&= \sum_{j,k=1}^n 
			\frac{\partial\det}{\partial a_{jk}}(A) \dif a_{jk}
			= \sum_{j,k=1}^n c_{jk} \dif a_{jk} \\
		&= \sum_{j,k=1}^n
			\Big(\mathrm{adj}(A)^T\Big)_{jk} \dif a_{jk}
			=\sum_{j,k=1}^n \big(\mathrm{adj}(A) \big)_{kj} 
					\dif a_{jk} \\
		&= \sum_{k=1}^n (\mathrm{adj}(A) \dif A)_{kk}
			=\mathrm{tr}(\mathrm{adj}(A) \dif A).
\end{align*}
\end{proof}

We now obtain explicit formulas for the Bergman metrics of the Cartan domains of type III. Note that we have provided coordinate free expressions. This will be useful for our computations in the rest of this work.

\begin{teo}\label{teo:BergmanMetricFormula}
	The Bergman metrics on $D^{III}_n$ and $\mathscr{S}_n$ are respectively given~by
	\begin{align*}
		g_Z^{D^{III}_n}(U,V)
			&=\mathrm{tr} \big((I_n-Z\overline{Z})^{-1} U 
			(I_n-\overline{Z}Z)^{-1}\overline{V}\big), \\
		g_Z^{\mathscr{S}_n}(U,V)
			&=\mathrm{tr} \big(\mathrm{Im}(Z)^{-1} U 
			\mathrm{Im}(Z)^{-1} \overline{V}\big),
	\end{align*}
	for every $U, V \in \mathrm{Symm}(n,\mathbb{C})$. In particular, the K\"ahler forms of $D^{III}_n$ and $\mathscr{S}_n$ are respectively given~by
	\begin{align*}
		\omega_Z^{D^{III}_n}(U,V)
			=&\;i\;\mathrm{tr}
			\big((I_n-Z\overline{Z})^{-1} U 
			(I_n-\overline{Z}Z)^{-1}\overline{V} \big)  \\
			 	&-i\;\mathrm{tr}\big((I_n-\overline{Z}Z)^{-1}
			 	\overline{U} (I_n-Z\overline{Z})^{-1}V\big),\\
		\omega_Z^{\mathscr{S}_n}(U,V)
			=&\;2\;\mathrm{tr}\big(\mathrm{Im}(Z)^{-1}\mathrm{Re}(U)
			\mathrm{Im}(Z)^{-1}\mathrm{Im}(V) \big) \\
				&-2\;\mathrm{tr}\big(\mathrm{Im}(Z)^{-1} \mathrm{Im}(U)
				\mathrm{Im}(Z)^{-1}\mathrm{Re}(V)\big),
	\end{align*}
	for every $U, V \in \mathrm{Symm}(n, \mathbb{C})$.
\end{teo}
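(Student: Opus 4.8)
The plan is to reduce everything to a single coordinate-free computation: the mixed Wirtinger second derivative of $\log K_D(Z,Z)$ evaluated on a pair of symmetric tangent directions $U,V \in \mathrm{Symm}(n,\mathbb{C})$. The first point to settle is that, although the coordinates $z_{lm}$ run only over indices $l \le m$, for any symmetric $U$ the constrained partials reassemble into the ordinary matrix directional derivative,
\[
	\sum_{l \le m} \frac{\partial (\log K_D)}{\partial z_{lm}} U_{lm}
		= \frac{\mathrm{d}}{\mathrm{d}t}\Big|_{t=0} \log K_D(Z + tU, Z),
\]
with the analogous statement for $\overline{z}_{jk}$; indeed, for $l < m$ the coordinate $z_{lm}$ occupies both the $(l,m)$ and $(m,l)$ slots, and the symmetry $U_{lm}=U_{ml}$ exactly absorbs the resulting doubling. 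Hence, writing $P = (I_n - Z\overline{Z})^{-1}$ and $Q = (I_n - \overline{Z}Z)^{-1}$, it suffices to evaluate $g_Z^{D}(U,V) = c_D\, \partial_U \partial_{\overline V} \log K_D(Z,Z)$ by differentiating the matrix expressions directly, treating $Z$ and $\overline{Z}$ as independent. The only analytic inputs needed are $\mathrm{d}\log\det(A) = \mathrm{tr}(A^{-1}\mathrm{d}A)$, which follows from Lemma~\ref{lem:derdet} together with $\mathrm{adj}(A)=\det(A)A^{-1}$, and the rule $\mathrm{d}(A^{-1}) = -A^{-1}(\mathrm{d}A)A^{-1}$.

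For $D^{III}_n$ I use $\log K_{D^{III}_n}(Z,Z) = -(n+1)\log\det(I_n - Z\overline{Z})$. Differentiating first in the antiholomorphic direction $V$ gives $\partial_{\overline V}\log\det(I_n - Z\overline{Z}) = -\mathrm{tr}(PZ\overline{V})$, and then differentiating in the holomorphic direction $U$, using $\partial_U P = PU\overline{Z}P$, produces the two trace terms $-\mathrm{tr}(PU\overline{Z}PZ\overline{V}) - \mathrm{tr}(PU\overline{V})$, which combine into $-\mathrm{tr}\big(PU(\overline{Z}PZ + I_n)\overline{V}\big)$. Since $c_{D^{III}_n}\cdot(-(n+1)) = -1$, the proposed formula reduces to the push-through identity $I_n + \overline{Z}(I_n - Z\overline{Z})^{-1}Z = (I_n - \overline{Z}Z)^{-1}$, i.e.\ $\overline{Z}PZ + I_n = Q$; this is the key algebraic step and I expect it to be the main (though standard) obstacle. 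It then yields $g_Z^{D^{III}_n}(U,V) = \mathrm{tr}(PUQ\overline{V})$, as claimed.

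For $\mathscr{S}_n$ I use $\log K_{\mathscr{S}_n}(Z,Z) = -(n+1)\log\det(-i(Z-\overline{Z}))$, noting $-i(Z-\overline{Z}) = 2\,\mathrm{Im}(Z)$ and that the scalar factor $(-i)^n$ contributes only a derivative-free additive constant under $\log$. Setting $R = (Z - \overline{Z})^{-1} = \tfrac{1}{2i}\mathrm{Im}(Z)^{-1}$, the same two-step differentiation gives $\partial_U\partial_{\overline V}\log\det(Z - \overline{Z}) = \mathrm{tr}(RUR\overline{V})$. Substituting $R = \tfrac{1}{2i}\mathrm{Im}(Z)^{-1}$, so that $RUR = -\tfrac14 \mathrm{Im}(Z)^{-1}U\,\mathrm{Im}(Z)^{-1}$, and multiplying by $c_{\mathscr{S}_n}\cdot(-(n+1)) = -4$ produces $g_Z^{\mathscr{S}_n}(U,V) = \mathrm{tr}(\mathrm{Im}(Z)^{-1}U\,\mathrm{Im}(Z)^{-1}\overline{V})$.

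Finally, to obtain the Kähler forms I apply Definition~\ref{defi:Kahler}, that is $\omega_Z(U,V) = -2\,\mathrm{Im}(g_Z(U,V)) = i\big(g_Z(U,V) - \overline{g_Z(U,V)}\big)$. For $D^{III}_n$ the identities $\overline{P}=Q$ and $\overline{Q}=P$ turn $\overline{g_Z(U,V)}$ into $\mathrm{tr}(Q\overline{U}PV)$ after cyclic permutation, giving directly the stated antisymmetric pair of traces. For $\mathscr{S}_n$, since $S := \mathrm{Im}(Z)^{-1}$ is real, I expand $U = \mathrm{Re}(U) + i\,\mathrm{Im}(U)$ and $V = \mathrm{Re}(V) + i\,\mathrm{Im}(V)$ in $i\,\mathrm{tr}(SUS\overline{V}) - i\,\mathrm{tr}(S\overline{U}SV)$; the purely real trace combinations cancel in pairs and the surviving terms collect into $2\,\mathrm{tr}(S\,\mathrm{Re}(U)\,S\,\mathrm{Im}(V)) - 2\,\mathrm{tr}(S\,\mathrm{Im}(U)\,S\,\mathrm{Re}(V))$, matching the claim. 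The symmetric-coordinate bookkeeping of the opening paragraph and the push-through identity are the only places that genuinely require care; the remaining manipulations are routine trace algebra.
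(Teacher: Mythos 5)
Your proposal is correct and takes essentially the same route as the paper: both compute the mixed Wirtinger second derivative of $\log K_D(Z,Z)$ via the trace formula for the derivative of $\det$ (Lemma~\ref{lem:derdet}), hinge on the same push-through identity $I_n+\overline{Z}(I_n-Z\overline{Z})^{-1}Z=(I_n-\overline{Z}Z)^{-1}$, and derive the K\"ahler forms from $\omega = -2\,\mathrm{Im}(g) = i\big(g-\overline{g}\big)$. The only differences are presentational: the paper evaluates the second partials on the basis matrices $E_{jk}$ of $\mathrm{Symm}(n,\mathbb{C})$ (which handles your symmetric-coordinate bookkeeping implicitly) and dispatches the $\mathscr{S}_n$ metric as a ``similar'' computation, whereas you use coordinate-free directional derivatives, justify the symmetric-coordinate reduction explicitly, and carry out both cases in full.
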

\begin{proof}
	In this proof we will consider the complex vector spaces $\mathrm{Symm}(n,\mathbb{C})$ and $\mathrm{Mat}(n,\mathbb{C})$ whose coordinates will be denoted in both cases by $z_{jk}$, even though they have different meanings for such spaces. However, from the context where these coordinates are used it will be easy to identify the actual meaning.

	We start by computing the Bergman metric on $D^{III}_n$. First, we observe that we have the following partial derivative
	\[
		\frac{\partial (I_n - Z \overline{Z})}%
			{\partial \overline{z}_{jk}}(Z) 
			= -Z \frac{\partial \overline{Z}}%
				{\partial \overline{z}_{jk}}=-ZE_{jk},
	\]
	where $E_{jk}$ is the $n\times n$ symmetric matrix that has $1$ in the entries $(j,k)$ and $(k,j)$ and $0$ elsewhere. Note that these matrices are the basis with respect to which we are considering the canonical coordinates in $\mathrm{Symm}(n,\mathbb{C})$.
	
	Next, using the previous computation, applying Lemmas~\ref{lem:derdet} and \ref{lem:ChainRuleWirtinger}, Equation~\eqref{eq:KDIIIn} and using the fact that $\det$ is holomorphic, we obtain
	\begin{align*}
	\frac{1}{n+1}\frac{\partial}{\partial \overline{z}_{jk}}
		& \log K_{D^{III}_n}(Z,Z) = \\
		&= \frac{1}{\det(I_n-Z\overline{Z})}
			\sum_{l,m=1}^n
				\frac{\partial\det}{\partial z_{lm}}
				(I_n-Z\overline{Z})(ZE_{jk})_{lm}  \\
		&= \frac{1}{\det(I_n-Z\overline{Z})}
			\sum_{l,m=1}^n
			(\mathrm{adj}(I_n-Z\overline{Z})^T)_{lm}
			(ZE_{jk})_{lm}  \\
		&= \frac{1}{\det(I_n-Z\overline{Z})}
			\mathrm{tr}\big(\mathrm{adj}(I_n-Z\overline{Z})
				ZE_{jk}\big)  \\
		&= \mathrm{tr}\big((I_n-Z\overline{Z})^{-1} ZE_{jk}\big).
	\end{align*}
	Now, we will use the easy to prove relations
	\[
		(I_n-Z\overline{Z})^{-1}Z 
			= Z(I_n-\overline{Z}Z)^{-1}, \quad
		\overline{Z}(I_n-Z\overline{Z})^{-1} 
			= (I_n-\overline{Z}Z)^{-1}\overline{Z},
	\]
	which hold for every $Z \in D^{III}_n$. Using the identities obtained so far we compute
	\begin{align*}
		\frac{1}{n+1} 
		\frac{\partial^2}{\partial z_{lm}\partial\overline{z}_{jk}}
			&\log K^{D^{III}_n}(Z,Z)
				= \frac{\partial}{\partial z_{lm}} 
				\mathrm{tr}
				\big((I_n-Z\overline{Z})^{-1} Z E_{jk}\big)  \\
			= &\;\mathrm{tr}
				\big((I_n-Z\overline{Z})^{-1} 
				E_{lm} \overline{Z} 
					(I_n-Z\overline{Z})^{-1}Z E_{jk}\big) \\
			&+ \mathrm{tr}\big((I_n-Z\overline{Z})^{-1} 
				E_{lm} E_{jk}\big)  \\
			= &\;\mathrm{tr}
				\big((I_n-Z\overline{Z})^{-1} E_{lm} 
				(I_n-\overline{Z}Z)^{-1}\overline{Z} 
				Z E_{jk}\big)  \\
			&+ \mathrm{tr}
				\big((I_n-Z\overline{Z})^{-1} 
				E_{lm} E_{jk} \big)  \\
			= &\;\mathrm{tr}
				\big((I_n-Z\overline{Z})^{-1} E_{lm} 
				(I_n-\overline{Z}Z)^{-1}
				(\overline{Z}Z+(I_n-\overline{Z}Z)) E_{jk} \big) \\
			= &\;\mathrm{tr}
				\big((I_n-Z\overline{Z})^{-1} E_{lm} 
				(I_n-\overline{Z}Z)^{-1} E_{jk} \big).
	\end{align*}
	This implies that the metric $g^{D^{III}_n}_Z$ satisfies the required identity on the basic elements of the vector space $\mathrm{Symm}(n,\mathbb{C})$, thus proving the result for the Bergman metric of $D^{III}_n$. The corresponding computation of the Bergman metric for $\mathscr{S}_n$ is obtained similarly.

	From Definition~\ref{defi:Kahler} the K\"ahler form of $D^{III}_n$ is given by
	\begin{align*}
		\omega^{D^{III}_n}_Z(U,V) &= 
			-2 \mathrm{Im}\Big(g^{D^{III}_n}_Z(U,V)\Big) \\
			&= i \Big(g^{D^{III}_n}_Z(U,V) - 
				\overline{g^{D^{III}_n}_Z(U,V)}\Big),
	\end{align*}
	which yields the stated formula from our computation of the Bergman metric of $D^{III}_n$.

	Finally, for the K\"ahler form of $\mathscr{S}_n$ we compute
	\begin{multline*}
		\omega_Z^{\mathscr{S}_n}(U,V) =
			-2\mathrm{Im}\Big(g_Z^{\mathscr{S}_n}(U,V)\Big) \\ 
			= -2\mathrm{Im}\Big( 
				\mathrm{tr}\big(
				\mathrm{Im}(Z)^{-1}
				(\mathrm{Re}(U)+i\mathrm{Im}(U))
				\mathrm{Im}(Z)^{-1}
				(\mathrm{Re}(V)-i\mathrm{Im}(V))
				\big)
			\Big),
	\end{multline*}
	from which the stated formula is easily obtained.
\end{proof}

\subsection{Moment maps}
Now we turn back our attention to symplectic geometry. It will provide the main geometric tools and objects that we will apply to the study of Toeplitz operators. We refer to \cite{McDuffSalamon} for the symplectic geometry facts stated without proof.

In the rest of this subsection $(M, \omega)$ will denote a fixed symplectic manifold. A diffeomorphism $\varphi : M \rightarrow M$ is called a symplectomorphism if $\varphi^*(\omega)=\omega$. In other words, a symplectomorphism is a diffeomorphism preserving the symplectic form.

If $H$ is a Lie group with a smooth action on $M$, then we say that the $H$-action is symplectic if the map
\begin{align*}
	M & \longrightarrow M \\
	z &\longmapsto h\cdot z
\end{align*}
is a symplectomorphism for every $h \in H$.

There are two important types of vector fields on $M$. From now on, we will denote by $\mathfrak{X}(M)$ the Lie algebra of vector fields over $M$. A field $X \in \mathfrak{X}(M)$ is called a symplectic vector field if and only if the $1$-form $\omega(X, \cdot)$ is closed, and it is called a Hamiltonian vector field if and only if the form $\omega(X, \cdot)$ is exact. We will denote by $\mathfrak{X}(M,\omega)$ the space of symplectic vector fields on $M$. It is a well known fact that $\mathfrak{X}(M,\omega)$ is a Lie subalgebra of $\mathfrak{X}(M)$.

For any smooth function $f : M \rightarrow \mathbb{R}$, the non-degeneracy of $\omega$ implies the existence of a unique element $X_f \in \mathfrak{X}(M)$ such that
\[
	\dif f = \omega(X_f, \cdot).
\]
In this case, $X_f$ is called the Hamiltonian vector field associated to $f$. 

Symplectic vector fields can be characterized by symplectomorphisms. More precisely, it is well known that an element $X \in \mathfrak{X}(M)$ belongs to $\mathfrak{X}(M,\omega)$ if and only if the local flow generated by $X$ acts by (locally defined) symplectomorphisms.

An important converse to the previous fact relates symplectic actions to symplectic vector fields as follows. Let us consider a symplectic action of a Lie group $H$ on $M$. Then, for every $X \in \mathfrak{h}$ (the Lie algebra of $H$), we define the induced vector field on $M$ by
\[
	X^\sharp_z = \frac{\dif}{\dif s}\sVert[2]_{s=0}\exp(sX)\cdot z. 
\]
for every $z \in M$, where $\exp : \mathfrak{h} \rightarrow H$ is the exponential map of $H$. Then, the fact that the $H$-action is symplectic implies that $X^\sharp \in \mathfrak{X}(M,\omega)$ for every $X \in \mathfrak{h}$.

In the previous discussion, we have shown two different constructions that map into the space $\mathfrak{X}(M,\omega)$ of symplectic vector fields. Hence, a natural problem to consider is the existence of a map $\mathfrak{h} \rightarrow C^\infty(M)$ that makes the following diagram commute
\[
	\xymatrix{
			& C^\infty(M) \ar[d] \\
		\mathfrak{h} \ar[ur] \ar[r] & \mathfrak{X}(M,\omega)
	}
\]
where the vertical arrow is the map $f \mapsto X_f$ and the horizontal arrow is the map $X \mapsto X^\sharp$. The existence of such diagonal map yields the notion of a moment map for the $H$-action. The precise definition requires some additional conditions. We recall that $\mathrm{Ad} = \mathrm{Ad}_H : H \rightarrow \mathrm{GL}(\mathfrak{h})$ denotes the adjoint representation of the Lie group $H$, and that $\mathrm{Ad}^*$ denotes the dual representation on $\mathfrak{h}^*$. In particular, we have $\mathrm{Ad}^*(h) = \mathrm{Ad}(h^{-1})^\top$ for every $h \in H$.

\begin{defi}\label{defi:momentmap}
	Let $(M,\omega)$ be a symplectic manifold and let $H$ be a Lie group acting by symplectomorphisms on $M$. A moment map for the $H$-action is a smooth map $\mu: M \rightarrow \mathfrak{h}^{*}$, where $\mathfrak{h}^*$ is the vector space dual of $\mathfrak{h}$, that satisfies the following properties.
	\begin{enumerate}
		\item For every  $X \in \mathfrak{h}$ consider the map $\mu_X : M \rightarrow \mathbb{R}$ given by $\mu_X(z) = \langle\mu(z), X\rangle$. Then, the Hamiltonian vector field associated to $\mu_X$ is $X^\sharp$, for every $X \in \mathfrak{h}$. In other words, it holds
		\[
			\dif\mu_X=\omega(X^{\#}, \cdot),
		\]
		for every $X \in \mathfrak{h}$.
		\item The map $\mu$ is $H$-equivariant. In other words, we have
		\[
			\mu(h\cdot z) = \mathrm{Ad}^*(h)(\mu(z)),
		\]
		for every $z\in M$ and $h\in H$.
	\end{enumerate}
\end{defi}

\begin{remark}
	If $H$ is an Abelian group, then its adjoint representation satisfies $\mathrm{Ad}(h) = I_\mathfrak{h}$ for every  $h\in H$. Hence, in this case condition~2.~in Definition~\ref{defi:momentmap} reduces to 
	\[
		\mu(h\cdot z) = \mu(z),
	\]
	for every $h \in H$ and $z \in M$. In other words, this requires the smooth map to be $H$-invariant.
\end{remark}

\section{Three Abelian biholomorphism groups and their moment maps}
\label{sec:3Groups}
In this section we study three special types of subgroups of biholomorphisms acting on Cartan domains of type III. For the corresponding Abelian groups, we compute the moment maps. We will see later on that these moment maps are a powerful tool to find commutative $C^*$-algebras generated by Toeplitz operators.

\subsection{Elliptic, Hyperbolic, and Parabolic Actions}
\label{subsec:ellhyperpar_actions}
The Cartan domains $D^{III}_n$ and their unbounded realizations $\mathscr{S}_n$ carry three interesting actions of subgroups of biholomorphisms. As we will see, these actions generalize the three different types of M\"obius transformations found for the unit disk $\mathbb{D}$ and the upper half plane $\mathbb{H}$.

Proposition~\ref{propo:biholoDIIIn} provides the action 
\begin{align*}
	\mathrm{U}(n) \times D^{III}_n &\longrightarrow D^{III}_n \\
	U \cdot Z &= UZU^\top,
\end{align*}
which yields the subgroup of biholomorphisms that fixes the origin. Up to conjugacy, this characterizes the subgroups that fix some point in the domain $D^{III}_n$. This is so because of the homogeneity of this domain. We will call this the \textbf{Elliptic Action} on $D^{III}_n$.

Next we observe that there is a canonical homomorphism of Lie groups given by
\begin{align*}
	\mathrm{GL}(n,\mathbb{R}) &\longrightarrow 
	\mathrm{Sp}(n,\mathbb{R}) \\
	A &\longmapsto 
	\begin{pmatrix}
		A & 0 \\
		0 & (A^{-1})^\top
	\end{pmatrix}.
\end{align*}
A straightforward computation shows that this assignment is indeed a homomorphism whose image lies in $\mathrm{Sp}(n,\mathbb{R})$. Hence,  this homomorphism and Proposition~\ref{propo:biholoSn} provide the action
\begin{align*}
	\mathrm{GL}(n,\mathbb{R}) \times \mathscr{S}_n &\longrightarrow
		\mathscr{S}_n \\
	A \cdot Z &= AZA^\top.
\end{align*}
It is easily seen that this action realizes the subgroup of biholomorphisms that fixes the origin, a boundary point of the domain $\mathscr{S}_n$. For this reason, we will call this the \textbf{Hyperbolic Action} on $\mathscr{S}_n$.

Finally, we have a canonical homomorphism of Lie groups given by 
\begin{align*}
	\mathrm{Symm}(n,\mathbb{R}) &\longrightarrow
		\mathrm{Sp}(n,\mathbb{R}) \\
	S &\longmapsto 
		\begin{pmatrix}
			I_n & S \\
			0 & I_n
		\end{pmatrix},
\end{align*}
where $\mathrm{Symm}(n,\mathbb{R})$ is endowed with the Lie group structure with operation given by the sum of matrices. Again, it is straightforward to show that this map is indeed a homomorphism into the group $\mathrm{Sp}(n,\mathbb{R})$. We now have that this homomorphism together with Proposition~\ref{propo:biholoSn} provide the action
\begin{align*}
	\mathrm{Symm}(n,\mathbb{R}) \times \mathscr{S}_n 
		&\longrightarrow \mathscr{S}_n \\
	S \cdot Z &= Z + S.
\end{align*}
This action realizes the subgroup of biholomorphisms of the tube type domain $\mathscr{S}_n$ that correspond to translations on the real vector space part. Since this action clearly generalizes the translation action on the real part on the upper half-plane $\mathbb{H}$, we will call this action on $\mathscr{S}_n$ the \textbf{Parabolic Action}.

In fact, all three actions introduced above generalize the behavior observed in the $1$-dimensional case. This is stated in the following well known result. We recall that two biholomorphisms are conjugated if they are so under some other biholomorphism. This result justifies our choice of notation for the actions considered above.

\begin{coro}\label{coro:3Actionn=1}
	Let us denote by $D$ either $\mathbb{D}$ or $\mathbb{H}$. If $\varphi$ is a biholomorphism of $D$, then the following equivalences hold.
	\begin{enumerate}
		\item The M\"obius transformation $\varphi$ is elliptic if and only if it is conjugated to a transformation that belongs to the action $\mathbb{T} \times \mathbb{D} \rightarrow \mathbb{D}$ given by $z \mapsto tz$.
		\item The M\"obius transformation $\varphi$ is hyperbolic if and only if it is conjugated to a transformation that belongs to the action $\mathbb{R}_+ \times \mathbb{H} \rightarrow \mathbb{H}$ given by $z \mapsto rz$.
		\item The M\"obius transformation $\varphi$ is parabolic if and only if it is conjugated to a transformation that belongs to the action $\mathbb{R} \times \mathbb{H} \rightarrow \mathbb{H}$ given by $z \mapsto z + s$.
	\end{enumerate}
\end{coro}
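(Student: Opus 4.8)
The plan is to prove Corollary~\ref{coro:3Actionn=1} by reducing the classification of Möbius transformations to the well-known normal forms, and then matching each normal form to one of the three prototype actions. The standard starting point is the classical fact that every non-identity biholomorphism of $\mathbb{D}$ (equivalently $\mathbb{H}$, via the Cayley transform of Proposition~\ref{propo:diffoftwodomains} with $n=1$) is a Möbius transformation, and such transformations are classified by their fixed-point behavior into three mutually exclusive types: elliptic (one interior fixed point), hyperbolic (two distinct boundary fixed points), and parabolic (a single boundary fixed point of multiplicity two). This trichotomy is most cleanly stated using the conjugacy-invariant quantity given by the trace of a representing $\mathrm{SL}(2,\mathbb{R})$ matrix: $|\operatorname{tr}| < 2$ is elliptic, $|\operatorname{tr}| > 2$ is hyperbolic, and $|\operatorname{tr}| = 2$ (non-identity) is parabolic.

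First I would establish the three prototype models explicitly. On $\mathbb{D}$, the rotation $z \mapsto tz$ with $t \in \mathbb{T}$ fixes only the origin $0 \in \mathbb{D}$, so it is elliptic; these are exactly the maps realizing the Elliptic Action via $\mathrm{U}(1) = \mathbb{T}$ (the $n=1$ case of the $\mathrm{U}(n)$-action $U \cdot Z = UZU^\top$). On $\mathbb{H}$, the dilation $z \mapsto rz$ with $r \in \mathbb{R}_+$ fixes the two boundary points $0$ and $\infty$, so it is hyperbolic; these realize the Hyperbolic Action (the $n=1$ case of $A \cdot Z = AZA^\top$, where $\mathrm{GL}(1,\mathbb{R})^+ \cong \mathbb{R}_+$). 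Finally, on $\mathbb{H}$ the translation $z \mapsto z + s$ with $s \in \mathbb{R}$ fixes only $\infty$, so it is parabolic; these realize the Parabolic Action ($n=1$ case of $S \cdot Z = Z + S$).

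With the models in place, each of the three equivalences is then the assertion that a Möbius transformation of a given type is conjugate, by some biholomorphism of $D$, to a member of the corresponding prototype family, together with the trivial converse that each prototype has the stated type. The forward direction is the classical normal-form theorem: using transitivity of the biholomorphism group on interior points (for the elliptic case, move the fixed point to the origin, whence $\varphi$ becomes a rotation) and on the boundary (for the hyperbolic and parabolic cases, move the fixed point set to $\{0,\infty\}$ or to $\{\infty\}$ respectively, whence $\varphi$ becomes a dilation or translation). The converse direction simply observes that conjugation preserves the fixed-point type, which was already recorded for each prototype above.

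\textbf{The main obstacle} is essentially bookkeeping rather than conceptual depth: one must verify that the conjugating biholomorphisms used to bring $\varphi$ into normal form genuinely exist within the biholomorphism group of $D$, which follows from the homogeneity of $D$ on interior points and the two-point (respectively one-point) transitivity on the boundary; and one must confirm that the normal-form rotation/dilation/translation parameter lands in the prescribed prototype family. Since the entire statement is a classical and well-documented result about $\mathrm{PSL}(2,\mathbb{R})$ acting on $\mathbb{H}$, I would not reprove the trichotomy from scratch but instead cite it and merely match the three normal forms to the three Actions defined above, noting that the $n=1$ specializations of the Elliptic, Hyperbolic, and Parabolic Actions coincide with the rotation, dilation, and translation families.
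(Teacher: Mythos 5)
Your proposal is correct: the paper states Corollary~\ref{coro:3Actionn=1} without proof, as a well-known classical fact, and the argument you outline --- the fixed-point/trace trichotomy for elements of $\mathrm{PSL}(2,\mathbb{R})$, the identification of the rotations of $\mathbb{D}$, the dilations of $\mathbb{H}$ and the translations of $\mathbb{H}$ as the three prototypes, and conjugation into normal form via interior homogeneity and boundary transitivity (together with the Cayley transform of Proposition~\ref{propo:diffoftwodomains} for $n=1$ to pass between $\mathbb{D}$ and $\mathbb{H}$) --- is exactly the standard proof the paper implicitly invokes. The only point worth making explicit is the one you already note in passing: the identity map belongs to all three prototype families, so the equivalences should be read for non-identity transformations, just as the trace criterion $\abs{\operatorname{tr}} = 2$ characterizes parabolicity only among non-identity elements.
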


We note that the Elliptic and Hyperbolic Actions are given by actions of Abelian groups if and only if $n = 1$. Nevertheless, the Parabolic Action is given by an Abelian Lie group in any dimension. For these reason, we introduce in the next definition actions of Abelian groups associated to the Elliptic and Hyperbolic cases. 

\begin{defi}\label{defi:AbelianEllHyp}
	The Abelian Elliptic Action on $D^{III}_n$ is defined by
	\begin{align*}
		\mathbb{T} \times D^{III}_n &\longrightarrow D^{III}_n \\
			t \cdot Z &= t^2 Z.
	\end{align*}
	The Abelian Hyperbolic Action on $\mathscr{S}_n$ is defined by 
	\begin{align*}
		\mathbb{R}_+ \times \mathscr{S}_n 
			&\longrightarrow \mathscr{S}_n \\
			r \cdot Z &= r^2 Z.
	\end{align*}
\end{defi}

\begin{remark}\label{rmk:AbelianActions}
	We note that the Abelian Elliptic and Abelian Hyperbolic actions are obtained by considering the center of the groups corresponding to the non-Abelian actions. More precisely, we have the centers
	\[
		\mathrm{Z}(\mathrm{U}(n)) = \mathbb{T} I_n, \quad
		\mathrm{Z}(\mathrm{GL}(n,\mathbb{R})) 
			= \mathbb{R}_+ I_n \cup (-\mathbb{R}_+ I_n),
	\]
	and the actions in Definition~\ref{defi:AbelianEllHyp} are the restriction of the previously defined actions to the identity connected components of these center groups.	On the other hand, $\mathrm{Symm}(n,\mathbb{R})$ is already Abelian so that it coincides with its center, in other words we have
	\[
		\mathrm{Z}(\mathrm{Symm}(n,\mathbb{R})) =
			\mathrm{Symm}(n,\mathbb{R}).
	\]
	Hence, the most obvious definition of ``Abelian Parabolic Action'' would yield what we already have defined as the Parabolic Action. We also observe that these three actions of Abelian groups of biholomorphisms, the Abelian Elliptic, Abelian Hyperbolic and Parabolic, are natural generalizations of the actions described in Corollary~\ref{coro:3Actionn=1}.
\end{remark}

\subsection{Moment maps of the Abelian actions}
\label{subsec:momentmaps_Abelian}
We will now compute moment maps for all three Abelian actions introduced in this section. We refer to Definition~\ref{defi:AbelianEllHyp} and Remark~\ref{rmk:AbelianActions}. It follows from Theorem~\ref{teo:BergmanMetric} that every biholomorphism of either of the domains $D^{III}_n$ and $\mathscr{S}_n$ preserves the corresponding K\"ahler form. Hence, all the groups considered above act by symplectomorphisms. In particular, the notion of moment map given in Definition~\ref{defi:momentmap} can be applied to such actions.

\subsubsection{Moment map of the Abelian Elliptic Action}
The group in this case is $\mathbb{T}$ acting on $D^{III}_n$. The Lie algebra of this group is $\mathbb{R}$. The latter is canonically isomorphic to its dual $\mathbb{R}^*$, so we will compute a moment map as a function $D^{III}_n \rightarrow \mathbb{R}$.

For every element $t \in \mathbb{R}$ the corresponding induced vector field on $D^{III}_n$ is given~by
\[
	t_Z^\sharp 
		= \frac{\dif}{\dif s}\sVert[2]_{s=0} \exp(st) \cdot Z
		= \frac{\dif}{\dif s}\sVert[2]_{s=0} \exp(2ist)Z = 2itZ,
\]
for every $Z \in D^{III}_n$. Note that we have used the fact that the (Lie group) exponential map $\mathbb{R} \rightarrow \mathbb{T}$ satisfies $t \mapsto \exp(it)$.

\begin{propo}\label{propo:MomentMapT}
	The function given by
	\begin{align*}
		\mu^\mathbb{T} : D^{III}_n &\longrightarrow
			\mathbb{R} \\
		\mu^\mathbb{T}(Z) 
			&= -2 \mathrm{tr} \big(
			(I_n - Z \overline{Z})^{-1}
			\big),
	\end{align*}
	is a moment map for the Abelian Elliptic Action on $D^{III}_n$.
\end{propo}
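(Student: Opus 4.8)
The plan is to verify the two defining properties of a moment map from Definition~\ref{defi:momentmap} for the function $\mu^{\mathbb{T}}$, using the identification $\mathfrak{h}^* = \mathbb{R}^* \cong \mathbb{R}$ indicated before the statement. Since $\mathbb{T}$ is Abelian, the equivariance condition~(2) reduces, by the Remark following Definition~\ref{defi:momentmap}, to the invariance $\mu^{\mathbb{T}}(t \cdot Z) = \mu^{\mathbb{T}}(Z)$ for every $t \in \mathbb{T}$. This is immediate: since $t \cdot Z = t^2 Z$ and $|t| = 1$, we have $(t^2 Z)\overline{(t^2 Z)} = t^2 \overline{t}^2 Z\overline{Z} = |t|^4 Z\overline{Z} = Z\overline{Z}$, so $I_n - (t\cdot Z)\overline{(t\cdot Z)} = I_n - Z\overline{Z}$ and the trace is unchanged.

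For condition~(1) I would first reduce to a single generator. For $X = t \in \mathbb{R}$ one has $\mu_X = t\,\mu^{\mathbb{T}}$ and $X^{\sharp}_Z = 2itZ = t\cdot(2iZ)$; since both $\dif\mu_X$ and $\omega(X^\sharp, \cdot)$ are linear in the real parameter $t$, it suffices to check
\[
	\dif\mu^{\mathbb{T}}_Z(V) = \omega_Z^{D^{III}_n}(2iZ, V)
\]
for every tangent vector $V \in \mathrm{Symm}(n,\mathbb{C})$ at every $Z \in D^{III}_n$.

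Next I compute both sides explicitly. Writing $A = I_n - Z\overline{Z}$ and differentiating along $Z + sV$ with real $s$, the identity for the derivative of a matrix inverse together with $\frac{\dif}{\dif s}\big|_{0} A = -(V\overline{Z} + Z\overline{V})$ gives
\[
	\dif\mu^{\mathbb{T}}_Z(V) = -2\,\mathrm{tr}\big(A^{-1}(V\overline{Z} + Z\overline{V})A^{-1}\big) = -2\,\mathrm{tr}(A^{-2}V\overline{Z}) - 2\,\mathrm{tr}(A^{-2}Z\overline{V}),
\]
using cyclicity of the trace. On the other hand, substituting $U = 2iZ$ (so that $\overline{U} = -2i\overline{Z}$) into the formula for $\omega^{D^{III}_n}_Z$ from Theorem~\ref{teo:BergmanMetricFormula} and writing $B = I_n - \overline{Z}Z$ yields
\[
	\omega_Z^{D^{III}_n}(2iZ, V) = -2\,\mathrm{tr}(A^{-1}ZB^{-1}\overline{V}) - 2\,\mathrm{tr}(B^{-1}\overline{Z}A^{-1}V).
\]
To match the two expressions I invoke the commutation relations $A^{-1}Z = ZB^{-1}$ and $\overline{Z}A^{-1} = B^{-1}\overline{Z}$ already established in the proof of Theorem~\ref{teo:BergmanMetricFormula}. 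Iterating them gives $A^{-2}Z = ZB^{-2}$ and $\overline{Z}A^{-2} = B^{-2}\overline{Z}$, which turn each trace in $\dif\mu^{\mathbb{T}}_Z(V)$ into the corresponding trace in $\omega_Z^{D^{III}_n}(2iZ, V)$, completing the verification.

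The computations are routine once organized this way; the only delicate point is bookkeeping of the placement of $A^{-1}$ versus $B^{-1}$ and the signs produced by $\overline{U} = -2i\overline{Z}$, so that the commutation relations move the factors into exactly matching positions. I expect this bookkeeping, rather than any conceptual difficulty, to be the main obstacle.
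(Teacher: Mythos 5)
Your proposal is correct and takes essentially the same route as the paper's proof: both verify the two conditions of Definition~\ref{defi:momentmap}, checking the $\mathbb{T}$-invariance directly from $|t|=1$ and matching $\dif(\mu_t)_Z$ (computed by differentiating the matrix inverse along $Z+sV$) with $\omega_Z^{D^{III}_n}(t^\sharp_Z,\cdot)$ from Theorem~\ref{teo:BergmanMetricFormula} via the commutation relations $(I_n-Z\overline{Z})^{-1}Z = Z(I_n-\overline{Z}Z)^{-1}$ and $\overline{Z}(I_n-Z\overline{Z})^{-1} = (I_n-\overline{Z}Z)^{-1}\overline{Z}$. Your reduction to the single generator $t=1$ by linearity and your intermediate rewriting of the traces in the form $\mathrm{tr}\big(A^{-2}\,\cdot\,\big)$ are only cosmetic variations of the paper's computation, and your bookkeeping of signs and factor placements is accurate throughout.
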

\begin{proof}
	We start by computing $\omega_Z^{D^{III}_n}(t^\sharp_Z,\cdot)$ for every $t \in \mathbb{R}$ and $Z \in D^{III}_n$. For this first computation we use the above formula for $t^\sharp$ and the expression for the K\"ahler form of $D^{III}_n$ obtained in Theorem~\ref{teo:BergmanMetricFormula}. We have
	\begin{align*}
		\omega_Z^{D^{III}_n}(t^\sharp_Z,V)
			=&\;i\;\mathrm{tr}
				\big((I_n-Z\overline{Z})^{-1} 2itZ 
				(I_n-\overline{Z}Z)^{-1}\overline{V} \big)  \\
			&-i\;\mathrm{tr}\big((I_n-\overline{Z}Z)^{-1}
				\overline{2itZ} (I_n-Z\overline{Z})^{-1}V\big),\\
			=&\;-2t\;\mathrm{tr}
				\big((I_n-Z\overline{Z})^{-1} Z 
					(I_n-\overline{Z}Z)^{-1}\overline{V} \big)  \\
			&-2t\;\mathrm{tr}\big((I_n-\overline{Z}Z)^{-1}
				\overline{Z} (I_n-Z\overline{Z})^{-1}V\big),\\
	\end{align*}
	for every $V \in \mathrm{Symm}(n,\mathbb{C})$.

	On the other hand, we consider the function $\mu_t : D^{III}_n \rightarrow \mathbb{R}$ defined by
	\[
		\mu_t(Z) = \langle\mu^\mathbb{T}(Z),t \rangle =
					t \mu^\mathbb{T}(Z),
	\]
	and we compute its differential as follows
	\begin{align*}
		\dif\;(\mu_t)_Z&(V) = \\
			=&\; -2t\frac{\dif}{\dif s}\sVert[2]_{s=0}
				\mathrm{tr}\Big(\big(I_n - 
					(Z+sV)\overline{(Z+sV)}\big)^{-1}\Big) \\
			=&\; -2t\;\mathrm{tr}\Big(
				\big(I_n - Z\overline{Z}\big)^{-1}
					\big(V\overline{Z} + Z \overline{V}\big)
					\big(I_n - Z\overline{Z}\big)^{-1}		
					\Big) \\
			=&\; -2t\;\mathrm{tr}\Big(
					\big(I_n - Z\overline{Z}\big)^{-1} V
					\big(I_n - \overline{Z}Z\big)^{-1}
						\overline{Z}	
					\Big)  \\
				& -2t\;\mathrm{tr}\Big(
					Z\big(I_n - \overline{Z}Z\big)^{-1} 
						\overline{V}
					\big(I_n - Z\overline{Z}\big)^{-1}
					\Big)
	\end{align*}
	where we applied in the last identity the commutation relations between $Z$, $(I_n - Z\overline{Z})^{-1}$ and their conjugates used in the proof of Theorem~\ref{teo:BergmanMetricFormula}. We conclude that
	\[
		\dif\; (\mu_t)_Z(V) = \omega_Z^{D^{III}_n}(t^\sharp_Z,V)
	\]
	for every $V \in \mathrm{Symm}(n,\mathbb{C})$ and $Z \in D^{III}_n$. It follows that the first condition in Definition~\ref{defi:momentmap} is satisfied by the map in the statement. It remains to prove the $\mathbb{T}$-invariance of this map, but this is established through the identities
	\begin{align*}
		\mu^\mathbb{T}(t\cdot Z) = \mu^\mathbb{T}(t^2 Z) 
			&= -2\; \mathrm{tr}\big((I_n - t^2 Z \overline{t^2 Z})^{-1}\big) \\
			&= -2\; \mathrm{tr}\big((I_n - Z \overline{Z})^{-1}\big) 
				= \mu^\mathbb{T}(Z)
	\end{align*}
	that hold for every $t \in \mathbb{T}$ and $Z \in D^{III}_n$.
\end{proof}

\begin{remark}\label{rmk:AbelianElliptic_n=1}
	For the case $n=1$, the Abelian Elliptic Action yields the $\mathbb{T}$-action on  the unit disk $\mathbb{D}$ given by $t\cdot z = t^2z$. With this assumption, Proposition~\ref{propo:MomentMapT} provides the moment map
	\[
		\mu^\mathbb{T}(z) = -2\frac{1}{1 - |z|^2}.
	\]
	We observe that for actions of Abelian groups we can add to a given moment map an arbitrary, but fixed, constant to obtain another moment map (see Definition~\ref{defi:momentmap}). Hence, the map given by
	\[
		\mu(z) = \mu^\mathbb{T}(z) + 2 = -2 \frac{|z|^2}{1 - |z|^2},
	\]
	is a moment map as well for our $\mathbb{T}$-action on $\mathbb{D}$. This recovers, up to the multiplicative constant $2$, the moment map obtained in \cite[Proposition~4.1]{QSJFAUnitBall} for $n=1$. This referenced result computes the moment map for the natural action of the $n$-dimensional torus on the $n$-dimensional unit ball. We note that the factor $2$ comes from the reparameterization involved in using the action $t\cdot z = t^2 z$ instead of the action $t\cdot z = tz$.
\end{remark}

\subsubsection{Moment map of the Abelian Hyperbolic Action.}
We now have the group $\mathbb{R}_+$ acting on $\mathscr{S}_n$. The Lie algebra of this group is $\mathbb{R}$, which is canonically isomorphic to its dual. Hence, the moment map will be computed as a function $\mathscr{S}_n \rightarrow \mathbb{R}$.

For every $t \in \mathbb{R}$ the induced vector field on $\mathscr{S}_n$ is obtained as follows. This computation uses the fact that the (Lie group) exponential map is given in this case by $t \mapsto \exp(t)$.
\[
	t^\sharp_Z = \frac{\dif}{\dif s}\sVert[2]_{s=0} \exp(st)\cdot Z
		= \frac{\dif}{\dif s}\sVert[2]_{s=0} \exp(2st)Z = 2tZ,
\]
for every $Z \in \mathscr{S}_n$.

\begin{propo}\label{propo:MomentMapR+}
	The function given by
	\begin{align*}
		\mu^{\mathbb{R}_+} : \mathscr{S}_n &\longrightarrow \mathbb{R} \\
		\mu^{\mathbb{R}_+}(Z) &= 
			-4 \mathrm{tr}\big(
				\mathrm{Im}(Z)^{-1} \mathrm{Re}(Z)
			\big)
	\end{align*}
	is a moment map for the Abelian Hyperbolic Action on $\mathscr{S}_n$.	
\end{propo}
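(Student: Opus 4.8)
The plan is to follow closely the structure of the proof of Proposition~\ref{propo:MomentMapT}, verifying in turn the two conditions of Definition~\ref{defi:momentmap}. Since $\mathbb{R}_+$ is Abelian, the equivariance condition reduces to plain $\mathbb{R}_+$-invariance of $\mu^{\mathbb{R}_+}$, so the crux is the Hamiltonian condition $\dif(\mu_t)_Z = \omega_Z^{\mathscr{S}_n}(t^\sharp_Z,\cdot)$, where $\mu_t(Z) = t\,\mu^{\mathbb{R}_+}(Z)$ and $t^\sharp_Z = 2tZ$ is the induced vector field computed just above the statement.

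First I would evaluate the right-hand side. Substituting $U = t^\sharp_Z = 2tZ$ into the formula for $\omega_Z^{\mathscr{S}_n}$ from Theorem~\ref{teo:BergmanMetricFormula}, and using $\mathrm{Re}(2tZ) = 2t\,\mathrm{Re}(Z)$ together with $\mathrm{Im}(2tZ) = 2t\,\mathrm{Im}(Z)$, the factor $\mathrm{Im}(Z)^{-1}(2t\,\mathrm{Im}(Z)) = 2t\,I_n$ collapses the second trace, leaving
\[
	\omega_Z^{\mathscr{S}_n}(t^\sharp_Z,V) = 4t\,\mathrm{tr}\big(\mathrm{Im}(Z)^{-1}\mathrm{Re}(Z)\mathrm{Im}(Z)^{-1}\mathrm{Im}(V)\big) - 4t\,\mathrm{tr}\big(\mathrm{Im}(Z)^{-1}\mathrm{Re}(V)\big).
\]
Second, I would compute the left-hand side by differentiating $\mu_t(Z) = -4t\,\mathrm{tr}(\mathrm{Im}(Z)^{-1}\mathrm{Re}(Z))$ along the curve $Z + sV$. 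Writing $\mathrm{Re}(Z+sV) = \mathrm{Re}(Z)+s\,\mathrm{Re}(V)$ and $\mathrm{Im}(Z+sV) = \mathrm{Im}(Z)+s\,\mathrm{Im}(V)$, and using the standard derivative $\frac{\dif}{\dif s}\big|_{s=0}(\mathrm{Im}(Z)+s\,\mathrm{Im}(V))^{-1} = -\mathrm{Im}(Z)^{-1}\mathrm{Im}(V)\mathrm{Im}(Z)^{-1}$ together with the product rule inside the trace, I obtain
\[
	\dif(\mu_t)_Z(V) = 4t\,\mathrm{tr}\big(\mathrm{Im}(Z)^{-1}\mathrm{Im}(V)\mathrm{Im}(Z)^{-1}\mathrm{Re}(Z)\big) - 4t\,\mathrm{tr}\big(\mathrm{Im}(Z)^{-1}\mathrm{Re}(V)\big).
\]

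The two expressions agree in their second terms immediately. For the first terms I expect the main (if mild) obstacle: cyclicity of the trace alone does not identify $\mathrm{tr}(\mathrm{Im}(Z)^{-1}\mathrm{Im}(V)\mathrm{Im}(Z)^{-1}\mathrm{Re}(Z))$ with $\mathrm{tr}(\mathrm{Im}(Z)^{-1}\mathrm{Re}(Z)\mathrm{Im}(Z)^{-1}\mathrm{Im}(V))$, since these correspond to reversed cyclic orders of the four factors. The resolution is that all four factors are symmetric matrices: $\mathrm{Im}(Z)$, $\mathrm{Re}(Z)$, $\mathrm{Im}(V)$, and hence $\mathrm{Im}(Z)^{-1}$, all lie in $\mathrm{Symm}(n,\mathbb{R})$ because $Z, V \in \mathrm{Symm}(n,\mathbb{C})$. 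Applying $\mathrm{tr}(M) = \mathrm{tr}(M^\top)$ to the product reverses the order of the factors, and one cyclic rotation then yields exactly the other expression; this settles the Hamiltonian condition.

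Finally, $\mathbb{R}_+$-invariance follows from a one-line scaling computation. Since $\mathrm{Im}(r^2 Z)^{-1}\mathrm{Re}(r^2 Z) = r^{-2}\mathrm{Im}(Z)^{-1}\, r^2\mathrm{Re}(Z) = \mathrm{Im}(Z)^{-1}\mathrm{Re}(Z)$, we get $\mu^{\mathbb{R}_+}(r\cdot Z) = \mu^{\mathbb{R}_+}(Z)$ for every $r \in \mathbb{R}_+$ and $Z \in \mathscr{S}_n$, which is the $\mathbb{R}_+$-invariance required by the second condition of Definition~\ref{defi:momentmap} for an Abelian group. Together with the Hamiltonian identity above, this shows that $\mu^{\mathbb{R}_+}$ is a moment map for the Abelian Hyperbolic Action.
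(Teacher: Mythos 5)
Your proposal is correct and follows essentially the same route as the paper's proof: substitute $t^\sharp_Z = 2tZ$ into the K\"ahler form from Theorem~\ref{teo:BergmanMetricFormula}, differentiate $\mu_t$ along $Z+sV$, match the two expressions, and verify $\mathbb{R}_+$-invariance by the scaling computation; all of your formulas agree with the paper's. One small correction to your commentary: the ``obstacle'' you flag is illusory, since writing $A=\mathrm{Im}(Z)^{-1}$, $B=\mathrm{Im}(V)$, $C=\mathrm{Re}(Z)$, the word $ACAB$ \emph{is} a cyclic permutation of $ABAC$ (rotate by two places), so $\mathrm{tr}(ABAC)=\mathrm{tr}(ACAB)$ by cyclicity of the trace alone — your transpose-and-symmetry argument is valid but superfluous, which is why the paper passes from one expression to the other without comment.
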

\begin{proof}
	We compute $\omega_Z^{\mathscr{S}_n}(t^\sharp_Z, \cdot)$, for every $t \in \mathbb{R}$ and $Z \in \mathscr{S}_n$. For this we use the previous computations and the expression of the K\"ahler form of $\mathscr{S}_n$ obtained in Theorem~\ref{teo:BergmanMetricFormula}. We have in this case
	\begin{align*}
		\omega_Z^{\mathscr{S}_n}(t^\sharp_Z, V) 
			=&\;2\;\mathrm{tr}\big(
					\mathrm{Im}(Z)^{-1} \mathrm{Re}(2tZ) \mathrm{Im}(Z)^{-1}
							\mathrm{Im}(V)
				\big) \\
			&-2\;\mathrm{tr}\big(
					\mathrm{Im}(Z)^{-1} \mathrm{Im}(2tZ) \mathrm{Im}(Z)^{-1}
						\mathrm{Re}(V)
				\big) \\
			=&\;4t\;\mathrm{tr}\big(
					\mathrm{Im}(Z)^{-1} \mathrm{Re}(Z) \mathrm{Im}(Z)^{-1}
						\mathrm{Im}(V)
					\big) \\
			&-4t\;\mathrm{tr}\big(
					\mathrm{Im}(Z)^{-1}	\mathrm{Re}(V)
				\big),
	\end{align*}
	for every $V \in \mathrm{Symm}(n,\mathbb{C})$.
	
	On the other hand, we consider the function $\mu_t : \mathscr{S}_n \rightarrow \mathbb{R}$ given by
	\[
		\mu_t(Z) = \langle \mu^{\mathbb{R}_+}(Z), t\rangle = t\mu^{\mathbb{R}_+}(Z),
	\]
	for which we compute the differential as follows
	\begin{align*}
		\dif\;(\mu_t)_Z&(V) = \\
		&=-4t\frac{\dif}{\dif s}\sVert[2]_{s=0}
			\mathrm{tr}\big(
				\mathrm{Im}(Z + s V)^{-1} \mathrm{Re}(Z + s V)
				\big) \\
		&=-4t\frac{\dif}{\dif s}\sVert[2]_{s=0}
			\mathrm{tr}\big(
				\big(\mathrm{Im}(Z) + s \mathrm{Im}(V)\big)^{-1} 
					\big(\mathrm{Re}(Z) + s \mathrm{Re}(V)\big)
				\big) \\
		&=4t\;\mathrm{tr}\big(
				\mathrm{Im}(Z)^{-1} \mathrm{Im}(V) \mathrm{Im}(Z)^{-1}
					\mathrm{Re}(Z)
				\big) 
			-4t\;\mathrm{tr}\big(
				\mathrm{Im}(Z)^{-1} \mathrm{Re}(V)
				\big),
	\end{align*}
	for every $V \in \mathrm{Symm}(n, \mathbb{C})$. From this we conclude that
	\[
		\dif\;(\mu_t)_Z(V) = \omega_Z^{\mathscr{S}_n}(t^\sharp_Z, V),
	\]
	for every $V \in \mathrm{Symm}(n, \mathbb{C})$ and $Z \in \mathscr{S}_n$. Hence, by Definition~\ref{defi:momentmap} it remains to show that $\mu^{\mathbb{R}_+}$ is $\mathbb{R}_+$-invariant, and this is verified in the next computation
	\begin{align*}
		\mu^{\mathbb{R}_+}(r\cdot Z) = \mu^{\mathbb{R}_+}(r^2 Z) 
			&= -4\mathrm{tr}\big(
					\mathrm{Im}(r^2Z)^{-1} \mathrm{Re}(r^2Z)
				\big) \\
			&= -4\mathrm{tr}\big(
					\mathrm{Im}(Z)^{-1} \mathrm{Re}(Z)
				\big)
				= \mu^{\mathbb{R}_+}(Z),
	\end{align*}
	which holds for every $r \in \mathbb{R}_+$ and $Z \in \mathscr{S}_n$.
\end{proof}

\begin{remark}\label{rmk:AbelianHyperbolic_n=1}
	For $n = 1$, the Abelian Hyperbolic Action yields the $\mathbb{R}_+$-action on the upper half-plane $\mathbb{H}$ given by $r\cdot z = r^2z$. Under this restriction, from Proposition~\ref{propo:MomentMapR+} we obtain the moment map
	\[
		\mu^{\mathbb{R}_+}(z) = -4 \frac{\mathrm{Re}(z)}{\mathrm{Im}(z)}.
	\]
	This recover, up to a constant factor, the moment map obtained in \cite[Proposition~4.3]{QSJFAUnitBall} for $n=1$. In this case the factor comes from two sources. Firstly, we use the action $r\cdot z = r^2z$, instead of the action $r\cdot z = rz$ used in~\cite{QSJFAUnitBall}. Secondly, our formula for the K\"ahler form for $\mathscr{S}_1 = \mathbb{H}$ differs by a constant factor from the corresponding formula found in~\cite{QSJFAUnitBall}.
\end{remark}

\subsubsection{Moment map of the Parabolic Action}
Finally, we consider the group $\mathrm{Symm}(n,\mathbb{R})$ acting on $\mathscr{S}_n$. Since $\mathrm{Symm}(n,\mathbb{R})$ is a vector group, it follows that it coincides with its Lie algebra and its exponential map is the identity. There is a canonical isomorphism between $\mathrm{Symm}(n,\mathbb{R})$ and its dual space given by the positive definite inner product
\[
	\langle A, B\rangle=\mathrm{tr}(AB),
\]
defined for $A,B \in \mathrm{Symm}(n,\mathbb{R})$.

For every $S \in \mathrm{Symm}(n,\mathbb{R})$ the corresponding induced vector field on $\mathscr{S}_n$ satisfies for every $Z \in \mathrm{Symm}(n,\mathbb{C})$
\[
	S^\sharp_Z = \frac{\dif}{\dif s}\sVert[2]_{s=0} 
			\exp(sS) \cdot Z 
		= \frac{\dif}{\dif s}\sVert[2]_{s=0} (Z + sS) = S,
\]
which is the constant vector field with value $S$.

\begin{propo}\label{propo:MomentMapSymm}
	The function given by
	\begin{align*}
		\mu^{\mathrm{Symm}(n,\mathbb{R})} : 
			\mathscr{S}_n &\longrightarrow 
				\mathrm{Symm}(n,\mathbb{R}) \\
		\mu^{\mathrm{Symm}(n,\mathbb{R})}(Z) 
			&= -2\mathrm{Im}(Z)^{-1},
	\end{align*}
	is a moment map for the Parabolic Action on $\mathscr{S}_n$.
\end{propo}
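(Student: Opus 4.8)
The plan is to verify the two conditions of Definition~\ref{defi:momentmap}, following the same strategy used in Propositions~\ref{propo:MomentMapT} and~\ref{propo:MomentMapR+}. The genuinely new feature here is that the moment map is vector-valued, taking values in $\mathrm{Symm}(n,\mathbb{R}) \cong \mathrm{Symm}(n,\mathbb{R})^*$ under the trace pairing $\langle A, B\rangle = \mathrm{tr}(AB)$. Hence for each fixed $S \in \mathrm{Symm}(n,\mathbb{R})$ I would set
\[
	\mu_S(Z) = \langle \mu^{\mathrm{Symm}(n,\mathbb{R})}(Z), S\rangle
		= -2\,\mathrm{tr}\big(\mathrm{Im}(Z)^{-1} S\big),
\]
and test its differential against the induced field $S^\sharp_Z = S$ computed just above the statement.

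First I would compute the differential of $\mu_S$. Writing $\mathrm{Im}(Z+sV) = \mathrm{Im}(Z) + s\,\mathrm{Im}(V)$ and differentiating the matrix inverse via $\frac{\dif}{\dif s}A(s)^{-1} = -A(s)^{-1}A'(s)A(s)^{-1}$, I obtain
\[
	\dif\,(\mu_S)_Z(V)
		= 2\,\mathrm{tr}\big(\mathrm{Im}(Z)^{-1}\mathrm{Im}(V)\mathrm{Im}(Z)^{-1} S\big).
\]
Second I would evaluate $\omega_Z^{\mathscr{S}_n}(S^\sharp_Z, V) = \omega_Z^{\mathscr{S}_n}(S, V)$ using the K\"ahler form from Theorem~\ref{teo:BergmanMetricFormula}. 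Since $S$ is a real symmetric matrix we have $\mathrm{Re}(S) = S$ and $\mathrm{Im}(S) = 0$, so the second term of the K\"ahler form drops out and
\[
	\omega_Z^{\mathscr{S}_n}(S, V)
		= 2\,\mathrm{tr}\big(\mathrm{Im}(Z)^{-1} S\, \mathrm{Im}(Z)^{-1}\mathrm{Im}(V)\big).
\]
A single application of the cyclic invariance of the trace identifies this with $\dif\,(\mu_S)_Z(V)$, establishing the first condition for every $V \in \mathrm{Symm}(n,\mathbb{C})$ and every $S \in \mathrm{Symm}(n,\mathbb{R})$.

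Finally, for the second condition, since $\mathrm{Symm}(n,\mathbb{R})$ is Abelian the equivariance reduces to plain invariance, which is immediate: for $S \in \mathrm{Symm}(n,\mathbb{R})$ one has $\mathrm{Im}(Z + S) = \mathrm{Im}(Z)$ because $S$ is real, whence
\[
	\mu^{\mathrm{Symm}(n,\mathbb{R})}(S \cdot Z)
		= -2\,\mathrm{Im}(Z+S)^{-1}
		= -2\,\mathrm{Im}(Z)^{-1}
		= \mu^{\mathrm{Symm}(n,\mathbb{R})}(Z).
\]
I do not anticipate any serious obstacle; the calculation is in fact shorter than in the elliptic and hyperbolic cases because the induced vector field $S^\sharp$ is constant and purely real, which collapses one term of the K\"ahler form. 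The only point requiring care is the vector-valued bookkeeping—pairing correctly against $S$ through the trace inner product and performing the cyclic trace rearrangement so that the two expressions match exactly.
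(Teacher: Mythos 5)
Your proposal is correct and follows essentially the same route as the paper's own proof: the same test functions $\mu_S(Z) = -2\,\mathrm{tr}\big(\mathrm{Im}(Z)^{-1}S\big)$, the same differentiation of the matrix inverse, the same collapse of the K\"ahler form using $\mathrm{Re}(S)=S$, $\mathrm{Im}(S)=0$, a cyclic trace identification, and the immediate translation-invariance check. Nothing is missing; the calculation matches the paper's term for term.
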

\begin{proof}
	For every $S \in \mathrm{Symm}(n,\mathbb{R})$ and $Z \in \mathscr{S}_n$, using the above computations and Theorem~\ref{teo:BergmanMetricFormula} we obtain
	\begin{align*}
		\omega_Z^{\mathscr{S}_n}(S^\sharp_Z,V) 
			=&\;2\; \mathrm{tr}\big(
					\mathrm{Im}(Z)^{-1} \mathrm{Re}(S) \mathrm{Im}(Z)^{-1} \mathrm{Im}(V)
				\big) \\
			&- 2\; \mathrm{tr}\big(
					\mathrm{Im}(Z)^{-1} \mathrm{Im}(S) \mathrm{Im}(Z)^{-1} \mathrm{Re}(V)
				\big) \\
			=&\;2\; \mathrm{tr}\big(
					\mathrm{Im}(Z)^{-1} S \;
					\mathrm{Im}(Z)^{-1} \mathrm{Im}(V)
				\big),
	\end{align*}
	for every $Z \in \mathscr{S}_n$.
	
	On the other hand, we consider for every $S \in \mathrm{Symm}(n,\mathbb{R})$ the map $\mu_S : \mathscr{S}_n \rightarrow \mathrm{Symm}(n,\mathbb{R})$ defined by
	\[
		\mu_S(Z) = -2\mathrm{tr}\big(
						\mathrm{Im}(Z)^{-1} S
					\big),
	\]
	for which we compute
	\begin{align*}
		\dif\;(\mu_S)_Z(V) 
			&= -2\frac{\dif}{\dif s}\sVert[2]_{s=0} 
					\mathrm{tr}\big(\mathrm{Im}(Z + sV)^{-1} S
				\big) \\
			&= 2\;\mathrm{tr}\big(
					\mathrm{Im}(Z)^{-1} \mathrm{Im}(V)
					\mathrm{Im}(Z)^{-1} S
				\big),
	\end{align*}
	for every $V \in \mathrm{Symm}(n,\mathbb{C})$ and $Z \in \mathscr{S}_n$. This immediately yields
	\[
		\dif\;(\mu_S)_Z(V) = \omega_Z^{\mathscr{S}_n}(S^\sharp_Z,V),
	\]
	for every $V \in \mathrm{Symm}(n,\mathbb{C})$ and $Z \in \mathscr{S}_n$. By Definition~\ref{defi:momentmap} it remains to establish the $\mathrm{Symm}(n,\mathbb{R})$-invariance of $\mu^{\mathrm{Symm}(n,\mathbb{R})}$, and this achieved by noting that
	\begin{align*}
		\mu^{\mathrm{Symm}(n,\mathbb{R})}
			(S\cdot Z) 
			&= \mu^{\mathrm{Symm}(n,\mathbb{R})}(Z + S)
				= -2 \big(\mathrm{Im}(Z + S)^{-1} \big) \\
			&= -2 \big(\mathrm{Im}(Z)^{-1} \big)
				= \mu^{\mathrm{Symm}(n,\mathbb{R})}
					(Z) 
	\end{align*}
	for every $Z \in \mathscr{S}_n$ and $S \in \mathrm{Symm}(n,\mathbb{R})$.
\end{proof}

\begin{remark}\label{rmk:Parabolic_n=1}
	For $n=1$, the Parabolic Action yields the $\mathbb{R}$-action on the upper half-plane $\mathbb{H}$ given by $s\cdot z = z + s$. And in this situation, Proposition~\ref{propo:MomentMapSymm} provides the moment map
	\[
		\mu^\mathbb{R}(z) = -2 \frac{1}{\mathrm{Im}(z)}.
	\]
	As in the previous cases, this recovers, up to a constant factor, the moment map obtained in \cite[Proposition~4.2]{QSJFAUnitBall} for $n=1$. As in the case of Remark~\ref{rmk:AbelianHyperbolic_n=1} the factor comes from a different normalization of the K\"ahler form on this work and~\cite{QSJFAUnitBall}.
\end{remark}

\section{Toeplitz operators with special symbols}
\label{sec:ToeplitzSpecialSymbols}
We will now describe Toeplitz operators with special symbols using two related alternatives: symbols invariant under biholomorphism groups and symbols depending on the moment maps of such groups. Both cases yield, under suitable conditions, commutative $C^*$-algebras generated by Toeplitz operators. 

First we introduce a general notation. As before, in the rest of this work $D$ denotes either of the domains $D^{III}_n$ or $\mathscr{S}_n$. For $\mathcal{A} \subset L^\infty (D)$ a set of essentially bounded symbols, we denote by $\mathcal{T}^{(\lambda)}(\mathcal{A})$ the $C^*$-algebra generated by the Toeplitz operators $T_a^{(\lambda)}$ where $a \in \mathcal{A}$.

\subsection{Invariant symbols}\label{subsec:inv-symbols}
Let $H$ be a closed subgroup of biholomorphisms of $D$. We will denote by $L^\infty(D)^H$ the subspace of $L^\infty(D)$ consisting of $H$-invariant symbols. In other words, we have
\[
	L^\infty(D)^H =
		\{a\in L^\infty(D) : h\cdot a = a, \text{ for all } h \in H\},
\]
where, for a given $a \in L^\infty(D)$ and $h \in H$, we define
\[
	(h \cdot a)(Z) = a(h^{-1}\cdot Z),
\]
for almost every $Z \in D$.

Symmetric pairs associated to symmetric domains can be used to obtain commutative $C^*$-algebras generated by Toeplitz operators by considering invariant symbols. The definitions and precise statements can be found in \cite{DOQJFA}. In this work, we will use the fact that the pairs $(\mathrm{Sp}(n,\mathbb{R}), \mathrm{GL}(n,\mathbb{R}))$ and $(\mathrm{Sp}(n,\mathbb{C}) \cap \mathrm{U}(n,n), \mathrm{U}(n))$ are symmetric in order to obtain the following consequence of \cite[Theorem~5.1]{DOQJFA}.

\begin{teo}[\cite{DOQJFA}]\label{teo:comm_GL_U}
	For every $\lambda > n$, the $C^*$-algebras $\mathcal{T}^{(\lambda)} (L^\infty(D^{III}_n)^{\mathrm{U}(n)})$ 
	and $\mathcal{T}^{(\lambda)} (L^\infty(\mathscr{S}_n)^{\mathrm{GL}(n,\mathbb{R})})$ acting on the weighted Bergman spaces $\mathcal{A}^2_\lambda(D^{III}_n)$ and $\mathcal{A}^2_\lambda(\mathscr{S}_n)$, respectively, are commutative.
\end{teo}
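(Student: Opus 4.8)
The plan is to read this statement as a direct application of \cite[Theorem~5.1]{DOQJFA}, which guarantees that whenever $(G,K)$ is a symmetric pair with $G$ the biholomorphism group of a bounded symmetric domain, the $C^*$-algebra generated by Toeplitz operators with $K$-invariant symbols is commutative on every weighted Bergman space. The entire content therefore reduces to verifying that the two pairs $(\mathrm{Sp}(n,\mathbb{R}), \mathrm{GL}(n,\mathbb{R}))$ and $(\mathrm{Sp}(n,\mathbb{C}) \cap \mathrm{U}(n,n), \mathrm{U}(n))$ are symmetric, that is, that in each case the subgroup is the fixed-point group of an involutive automorphism of the ambient group.

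First I would introduce the single candidate involution $\sigma(M) = I_{n,n}\, M\, I_{n,n}$. A direct check using $I_{n,n}^2 = I_{2n}$ together with the identity $I_{n,n} J_n I_{n,n} = -J_n$ shows that $\sigma$ preserves the defining relations of both $\mathrm{Sp}(n,\mathbb{R})$ and $\mathrm{Sp}(n,\mathbb{C}) \cap \mathrm{U}(n,n)$, and hence is an involutive automorphism of each. Writing $M = \left(\begin{smallmatrix} A & B \\ C & D \end{smallmatrix}\right)$ in $n \times n$ blocks, one computes $\sigma(M) = \left(\begin{smallmatrix} A & -B \\ -C & D \end{smallmatrix}\right)$, so the fixed points of $\sigma$ are exactly the block-diagonal elements of the form $\mathrm{diag}(A,D)$.

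Next I would impose the defining relations on these block-diagonal elements. For $M = \mathrm{diag}(A,D) \in \mathrm{Sp}(n,\mathbb{R})$, the condition $M^\top J_n M = J_n$ forces $A^\top D = I_n$, hence $D = (A^{-1})^\top$ with $A \in \mathrm{GL}(n,\mathbb{R})$ arbitrary; this is precisely the embedding of $\mathrm{GL}(n,\mathbb{R})$ described in subsection~\ref{subsec:ellhyperpar_actions}, so the fixed-point group of $\sigma$ in $\mathrm{Sp}(n,\mathbb{R})$ is $\mathrm{GL}(n,\mathbb{R})$. For the bounded case, adding the relation $M^* I_{n,n} M = I_{n,n}$ to $M = \mathrm{diag}(A,D)$ forces $A$ and $D$ to be unitary; combined with $D = (A^{-1})^\top = \overline{A}$ this yields $M = \mathrm{diag}(A, \overline{A})$ with $A \in \mathrm{U}(n)$, which is exactly the embedding of $\mathrm{U}(n)$ in Proposition~\ref{propo:biholoDIIIn}. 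Thus the fixed-point group of $\sigma$ in $\mathrm{Sp}(n,\mathbb{C}) \cap \mathrm{U}(n,n)$ is $\mathrm{U}(n)$. With both pairs confirmed symmetric, \cite[Theorem~5.1]{DOQJFA} applies and delivers the commutativity of both $C^*$-algebras.

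The linear algebra above is elementary, so the only real obstacle is conceptual: matching our setup to the precise hypotheses of \cite[Theorem~5.1]{DOQJFA}. In particular one should confirm that the notion of symmetric pair used there (an involution whose fixed-point group lies between $(G^\sigma)^0$ and $G^\sigma$) is the one we verify here, and that it is indeed $K$-invariance of the symbols, with $K$ the fixed-point subgroup, that the cited theorem requires. Once this dictionary is fixed — and noting that the two cases correspond respectively to a non-Riemannian symmetric pair and to the Riemannian symmetric pair attached to the Cartan involution — no further argument is needed.
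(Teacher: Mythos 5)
Your proposal is correct and follows exactly the route the paper takes: Theorem~\ref{teo:comm_GL_U} is obtained there as a direct consequence of \cite[Theorem~5.1]{DOQJFA} once the pairs $(\mathrm{Sp}(n,\mathbb{R}),\mathrm{GL}(n,\mathbb{R}))$ and $(\mathrm{Sp}(n,\mathbb{C})\cap\mathrm{U}(n,n),\mathrm{U}(n))$ are known to be symmetric, a fact the paper asserts without proof. Your verification via the involution $\sigma(M)=I_{n,n}MI_{n,n}$ (using $I_{n,n}J_nI_{n,n}=-J_n$) and the block-diagonal fixed-point computation is accurate and simply fills in the detail the paper leaves to the reader.
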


With the notation from subsection~\ref{subsec:ellhyperpar_actions}, Theorem~\ref{teo:comm_GL_U} states that for the Elliptic and Hyperbolic actions on $D^{III}_n$ and $\mathscr{S}_n$, respectively, the symbols invariant under such actions yield Toeplitz operators that generate commutative $C^*$-algebras.

The Parabolic Action provides the same sort of conclusion. This follows from the next consequence of \cite[Theorem~5.8]{DOQJFA}. We note that in this case the group $\mathrm{Symm}(n,\mathbb{R})$ does not yield a symmetric pair in the group $\mathrm{Sp}(n,\mathbb{R})$ of biholomorphisms of $\mathscr{S}_n$.

\begin{teo}[\cite{DOQJFA}]\label{teo:comm_Symm}
	For every $\lambda > n$, the $C^*$-algebra $\mathcal{T}^{(\lambda)} (L^\infty(\mathscr{S}_n)^{\mathrm{Symm}(n,\mathbb{R})})$ acting on the weighted Bergman space $\mathcal{A}^2_\lambda(\mathscr{S}_n)$ is commutative.
\end{teo}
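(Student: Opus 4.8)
The plan is to reduce the statement to the harmonic analysis of the symmetric cone $\Omega_n$ by exploiting the tube realization $\mathscr{S}_n = \mathrm{Symm}(n,\mathbb{R}) \oplus i\Omega_n$. First I would identify the relevant symbols: a symbol $a \in L^\infty(\mathscr{S}_n)$ is invariant under the Parabolic Action $S \cdot Z = Z + S$ precisely when $a(Z+S) = a(Z)$ for all $S \in \mathrm{Symm}(n,\mathbb{R})$, that is, when $a$ factors through the projection $Z \mapsto \mathrm{Im}(Z)$. Thus $L^\infty(\mathscr{S}_n)^{\mathrm{Symm}(n,\mathbb{R})}$ consists exactly of the symbols $a(Z) = f(\mathrm{Im}(Z))$ with $f \in L^\infty(\Omega_n)$, the higher-rank analogue of the parabolic symbols on $\mathbb{H}$ treated in \cite{GKVParabolic}.

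Next I would set up the Fourier--Laplace model of the weighted Bergman space. Writing $Z = X + iY$ with $X \in \mathrm{Symm}(n,\mathbb{R})$ and $Y \in \Omega_n$, every $F \in \mathcal{A}^2_\lambda(\mathscr{S}_n)$ admits a representation $F(Z) = \int_{\Omega_n} e^{i\,\mathrm{tr}(Z\xi)} \psi(\xi)\, \dif \xi$, where $\xi$ ranges over the self-dual cone $\Omega_n$ and the exponential decays since $\mathrm{tr}(Y\xi) > 0$ for $Y,\xi \in \Omega_n$. By Plancherel in the variable $X$, the assignment $F \mapsto \psi$ is a unitary equivalence of $\mathcal{A}^2_\lambda(\mathscr{S}_n)$ with a weighted space $L^2(\Omega_n, \rho_\lambda)$, where $\rho_\lambda$ is expressed through powers of $\det$ and the gamma factors $\Gamma_{\Omega_n}$ already appearing in the normalization of $\widehat{v}_\lambda$. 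The decisive structural point is that multiplication by $f(\mathrm{Im}(Z)) = f(Y)$ commutes with the $X$-translations, so on the transform side it cannot mix different frequencies $\xi$: it acts fiberwise, i.e. diagonally in $\xi$.

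With this model in hand I would compute the Toeplitz operator. Since $T_f^{(\lambda)} = B_{\mathscr{S}_n,\lambda} \circ M_f$ and both factors respect the decomposition over $\xi$, the operator $T_f^{(\lambda)}$ becomes, on $L^2(\Omega_n, \rho_\lambda)$, multiplication by a scalar function $\gamma_{f,\lambda}(\xi)$ given by a normalized Laplace transform of $f$ against the cone measure $(\det Y)^{\lambda - n - 1}\,\dif Y$. Because every operator arising from an invariant symbol is a multiplication operator in one and the same model, any two of them commute, and hence so does the whole generated $C^*$-algebra $\mathcal{T}^{(\lambda)}(L^\infty(\mathscr{S}_n)^{\mathrm{Symm}(n,\mathbb{R})})$.

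I expect the main obstacle to be the rigorous construction of the Fourier--Laplace unitary and, above all, verifying that the Bergman projection is intertwined with the honest fiberwise projection on the cone side; this uses the full spectral theory of $\Omega_n$ rather than the symmetric-pair arguments available in the Elliptic and Hyperbolic cases. A cleaner route, which I would follow to keep the exposition short, is to observe that this fiberwise-diagonalization mechanism is precisely the content of the general framework of \cite[Theorem~5.8]{DOQJFA} for translation-type, non symmetric-pair, subgroups; it then suffices to check that the Parabolic Action of $\mathrm{Symm}(n,\mathbb{R})$ on $\mathscr{S}_n$ satisfies its hypotheses, with the explicit diagonalizing transform deferred to Theorem~\ref{teo:RlambdaFourierLaplace}.
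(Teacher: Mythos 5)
Your proposal is correct, and the comparison with the paper is instructive because the paper does not prove Theorem~\ref{teo:comm_Symm} internally at all: it is stated as a direct consequence of \cite[Theorem~5.8]{DOQJFA}, whose mechanism is representation-theoretic (multiplicity-freeness of the restriction of the weighted Bergman space representation to the subgroup) rather than an explicit transform, and the paper notes only that $\mathrm{Symm}(n,\mathbb{R})$ does not form a symmetric pair in $\mathrm{Sp}(n,\mathbb{R})$, so that the symmetric-pair result behind Theorem~\ref{teo:comm_GL_U} is unavailable. Thus the ``cleaner route'' in your last paragraph is exactly the paper's proof, with the caveat that you should describe the cited theorem as a multiplicity-free-restriction statement (of which your fiberwise picture is the concrete instantiation for an Abelian group: each character $\xi \in \Omega_n$ occurs with multiplicity one) rather than as an abstract diagonalization theorem. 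Your primary sketch, by contrast, is an independent and self-contained proof that the paper in effect carries out later for a different purpose: the identification $L^\infty(\mathscr{S}_n)^{\mathrm{Symm}(n,\mathbb{R})} = \{f(\mathrm{Im}(Z))\}$ is Proposition~\ref{propo:muSymm_and_invariance}; the obstacle you correctly single out --- intertwining the Bergman projection with the fiberwise projection --- is resolved in Lemmas~\ref{lem:Slambda}--\ref{lem:BergmanU} by solving the Fourier-transformed Cauchy--Riemann equations to produce an explicit isometry $S_\lambda$ with $B_\lambda = S_\lambda S_\lambda^*$; and Theorems~\ref{teo:RlambdaFourierLaplace} and \ref{teo:Toeplitz_SymmnR} then give $R_\lambda T^{(\lambda)}_a R_\lambda^* = M_{\gamma_{a,\lambda}}$, from which commutativity is immediate, since $\gamma_{a,\lambda} \in L^\infty(\Omega_n)$ (note $\gamma_{1,\lambda} \equiv 1$ by \cite[Equation~2.4.30]{Upmeier}, so $\|\gamma_{a,\lambda}\|_\infty \leq \|a\|_\infty$, and multiplication operators are normal and pairwise commuting). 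What your direct route buys is strictly more than the theorem: simultaneous diagonalization and the explicit spectral function $\gamma_{a,\lambda}$; what the paper's citation buys is brevity and a uniform framework treating the Elliptic, Hyperbolic and Parabolic cases together. Either way, your argument closes, with the deferred analytic steps being precisely those the paper supplies in Section~\ref{sec:SpectralIntegrals}.
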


\subsection{Moment map symbols}\label{subsec:momentmap-symbols}
Following \cite{QSJFAUnitBall,QRCAOT} we define the notion of moment map symbol for the setup of this work.

\begin{defi}\label{defi:momentmap_symbol}
	Let $D$ be either of the domains $D^{III}_n$ or $\mathscr{S}_n$ and $H$ a closed subgroup of the biholomorphism group of $D$. If $\mu^H : D \rightarrow \mathfrak{h}^*$ is a moment map for the action of $H$ on $D$, then a moment map symbol for $H$ or a $\mu^H$-symbol is a symbol $a \in L^\infty(D)$ that can be written in the form $a = f \circ \mu^H$ for some measurable function $f$. We denote by $L^\infty(D)^{\mu^H}$ the space of all essentially bounded measurable $\mu^H$-symbols on~$D$. 
\end{defi}

We have computed moment maps for the Abelian Elliptic, Abelian Hyperbolic and Parabolic actions in subsection~\ref{subsec:momentmaps_Abelian}. These computations allow us to provide the following simplified description of moment map symbols for these three actions. The claims are immediate consequences of Definition~\ref{defi:momentmap_symbol} and Propositions~\ref{propo:MomentMapT}, \ref{propo:MomentMapR+} and \ref{propo:MomentMapSymm}. 

\begin{propo}\label{propo:moment_symbols_explicit}
	Let $a \in L^\infty(D^{III}_n)$ and $b \in L^\infty(\mathscr{S}_n)$ be given. Then, the following equivalences hold
	\begin{enumerate}
		\item The measurable function $a$ is a $\mu^{\mathbb{T}}$-symbol if and only if there exists a measurable function $f$ such that $a(Z) = f\big(\mathrm{tr}\big((I_n - Z\overline{Z})^{-1}\big)\big)$, for almost every $Z \in D^{III}_n$.
		\item The measurable function $b$ is a $\mu^{\mathbb{R}_+}$-symbol if and only if there exists a measurable function $f$ such that $b(Z) = f\big(\mathrm{tr}\big(\mathrm{Im}(Z)^{-1} \mathrm{Re}(Z)\big)\big)$, for almost every $Z \in \mathscr{S}_n$.
		\item The measurable function $b$ is a $\mu^{\mathrm{Symm}(n,\mathbb{R})}$-symbol if and only if there exists a measurable function $f$ such that $b(Z) = f(\mathrm{Im}(Z))$, for almost every $Z \in \mathscr{S}_n$.
	\end{enumerate}
\end{propo}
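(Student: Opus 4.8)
The plan is to read off all three equivalences directly from the moment map formulas already computed, by observing that in each case the moment map differs from the ``core'' function appearing in the statement only by an explicit invertible transformation that can be absorbed into the outer measurable function. I would begin by recalling that, according to Definition~\ref{defi:momentmap_symbol}, a symbol is a $\mu^H$-symbol precisely when it has the form $f \circ \mu^H$ for some measurable $f$. The guiding principle is elementary: if $\iota$ is a bijection between two spaces with both $\iota$ and $\iota^{-1}$ measurable, then $f \mapsto f \circ \iota$ is a bijection of the associated spaces of measurable functions. Hence, whenever a moment map factors as $\mu^H = \iota \circ P$ with $\iota$ such a bi-measurable bijection, the $\mu^H$-symbols are exactly the functions $g \circ P$ with $g$ measurable: one sets $g = f \circ \iota$ and recovers $f = g \circ \iota^{-1}$.

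For the Abelian Elliptic case, Proposition~\ref{propo:MomentMapT} gives $\mu^{\mathbb{T}}(Z) = -2\,\mathrm{tr}((I_n - Z\overline{Z})^{-1})$, so here the core function is $P(Z) = \mathrm{tr}((I_n - Z\overline{Z})^{-1})$ and $\iota$ is multiplication by $-2$ on $\mathbb{R}$, a homeomorphism. The same remark applies to the Abelian Hyperbolic case via Proposition~\ref{propo:MomentMapR+}, where $P(Z) = \mathrm{tr}(\mathrm{Im}(Z)^{-1}\mathrm{Re}(Z))$ and $\iota$ is multiplication by $-4$. In both situations $\iota$ is a nonzero linear bijection of $\mathbb{R}$, so the absorption argument applies at once and each equivalence follows.

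The only case requiring a word of care is the Parabolic Action. By Proposition~\ref{propo:MomentMapSymm} we have $\mu^{\mathrm{Symm}(n,\mathbb{R})}(Z) = -2\,\mathrm{Im}(Z)^{-1}$, and since $\mathrm{Im}(Z) \in \Omega_n = \mathrm{Pos}(n,\mathbb{R})$ the core function is $P(Z) = \mathrm{Im}(Z)$ while $\iota$ is now the matrix map $Y \mapsto -2Y^{-1}$. The main, and still routine, point is to verify that this $\iota$ is a bijection with measurable inverse: inversion is a diffeomorphism of $\Omega_n$ onto itself, scaling by $-2$ carries $\Omega_n$ diffeomorphically onto the cone of negative definite matrices, and a direct computation shows $\iota^{-1}(W) = -2W^{-1}$ is of the same form. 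Being smooth, both $\iota$ and $\iota^{-1}$ are Borel measurable, so the absorption argument applies verbatim and yields the stated equivalence $b = f \circ \mu^{\mathrm{Symm}(n,\mathbb{R})} \iff b = g \circ \mathrm{Im}$.

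I expect no genuine obstacle in this proof; the entire content is the bi-measurability of the relabeling maps $\iota$, and the only place where this is not completely transparent is the matrix case above, where one must note that inversion on the positive definite cone is smooth with smooth inverse. All other steps are immediate substitutions of the three moment map formulas into Definition~\ref{defi:momentmap_symbol}.
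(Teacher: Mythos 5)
Your proposal is correct and follows essentially the same route as the paper, which dismisses the statement as an immediate consequence of Definition~\ref{defi:momentmap_symbol} together with Propositions~\ref{propo:MomentMapT}, \ref{propo:MomentMapR+} and \ref{propo:MomentMapSymm}: one substitutes the three moment-map formulas and absorbs the invertible relabelings into the outer measurable function. Your explicit check that $\iota(Y) = -2Y^{-1}$ is a bi-measurable bijection of $\Omega_n$ onto the negative definite cone, with $\iota^{-1}(W) = -2W^{-1}$, is precisely the routine detail the paper leaves implicit, and it is carried out correctly.
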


By definition, the moment map symbols for Abelian groups are invariant under the corresponding actions. It turns out that the moment maps of the first two actions are in fact invariant under larger groups, those considered in Theorem~\ref{teo:comm_GL_U}. This is the content of the next two results.

\begin{propo}\label{propo:muT-U(n)inv}
	Let $\mu^{\mathbb{T}} : D^{III}_n \rightarrow \mathbb{R}$ be the moment map for the $\mathbb{T}$-action on $D^{III}_n$ given in Proposition~\ref{propo:MomentMapT}. Then, $\mu^\mathbb{T}$ is a $\mathrm{U}(n)$-invariant function. In particular, we have $L^\infty(D^{III}_n)^{\mu^{\mathbb{T}}} \subset L^\infty(D^{III}_n)^{\mathrm{U}(n)}$.
\end{propo}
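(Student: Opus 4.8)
The plan is to verify the $\mathrm{U}(n)$-invariance directly from the explicit formula for $\mu^{\mathbb{T}}$ and the formula for the Elliptic Action. Recall from subsection~\ref{subsec:ellhyperpar_actions} that $\mathrm{U}(n)$ acts on $D^{III}_n$ by $U \cdot Z = U Z U^\top$, while Proposition~\ref{propo:MomentMapT} gives $\mu^{\mathbb{T}}(Z) = -2\,\mathrm{tr}\big((I_n - Z\overline{Z})^{-1}\big)$. Thus the whole task reduces to showing that the matrix $I_n - Z\overline{Z}$ transforms by a similarity under the action, since the trace of an inverse is a similarity invariant. First I would fix $U \in \mathrm{U}(n)$ and $Z \in D^{III}_n$, and compute how the quantity $(U\cdot Z)\overline{(U\cdot Z)}$ relates to $Z\overline{Z}$.

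The crux of the computation is the elementary but essential identity coming from unitarity: since $U^* = \overline{U}^\top = U^{-1}$, one obtains $U^\top \overline{U} = I_n$ and $\overline{U}^\top = U^{-1}$. Using $\overline{U\cdot Z} = \overline{U}\,\overline{Z}\,\overline{U}^\top$, this lets the inner factor collapse:
\begin{align*}
	(U\cdot Z)\overline{(U\cdot Z)}
		&= U Z U^\top \overline{U}\,\overline{Z}\,\overline{U}^\top
		= U Z \overline{Z}\, U^{-1}.
\end{align*}
Consequently $I_n - (U\cdot Z)\overline{(U\cdot Z)} = U\big(I_n - Z\overline{Z}\big)U^{-1}$, so that after inverting we get $\big(I_n - (U\cdot Z)\overline{(U\cdot Z)}\big)^{-1} = U\big(I_n - Z\overline{Z}\big)^{-1}U^{-1}$. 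Taking the trace and invoking its cyclic property yields $\mu^{\mathbb{T}}(U\cdot Z) = \mu^{\mathbb{T}}(Z)$, which is exactly the claimed invariance.

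For the inclusion $L^\infty(D^{III}_n)^{\mu^{\mathbb{T}}} \subset L^\infty(D^{III}_n)^{\mathrm{U}(n)}$, I would argue directly from Definition~\ref{defi:momentmap_symbol}: any $\mu^{\mathbb{T}}$-symbol has the form $a = f \circ \mu^{\mathbb{T}}$, and for $U \in \mathrm{U}(n)$ the invariance just established (applied to $U^{-1} \in \mathrm{U}(n)$) gives $(U\cdot a)(Z) = f\big(\mu^{\mathbb{T}}(U^{-1}\cdot Z)\big) = f\big(\mu^{\mathbb{T}}(Z)\big) = a(Z)$ for almost every $Z$, so $a \in L^\infty(D^{III}_n)^{\mathrm{U}(n)}$.

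There is no serious obstacle here; the only point requiring care is the correct handling of the complex conjugate of the action, namely verifying $U^\top\overline{U} = I_n$, which is precisely what converts the action into a genuine similarity and makes the trace invariant. Everything else is routine linear algebra and the cyclicity of the trace.
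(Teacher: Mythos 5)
Your proof is correct and follows essentially the same route as the paper: both exploit unitarity (via $U^\top\overline{U} = I_n$ and $\overline{U}^\top = U^{-1}$) to show that $I_n - Z\overline{Z}$ transforms by a similarity under the Elliptic Action, then conclude by the conjugation invariance of the trace, with the inclusion of symbol classes following directly from Definition~\ref{defi:momentmap_symbol}. Your write-up even makes explicit the almost-everywhere bookkeeping for the inclusion that the paper leaves implicit.
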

\begin{proof}
	Recall that $\mathrm{U}(n)$ acts on $D^{III}_n$ by $U \cdot Z = UZU^T$. Using the expression of $\mu^{\mathbb{T}}$ obtained in Proposition~\ref{propo:MomentMapT}, we have for every $U \in \mathrm{U}(n)$
	\begin{align*}
		\mu^{\mathbb{T}} (U \cdot Z)
			&=-2\mathrm{tr}
				\big((I_n
					- UZU^T\overline{UZU^T)})^{-1}\big)  \\
			&=-2\mathrm{tr}
				\big((I_n
					- UZ\overline{Z}\overline{U^T})^{-1}\big)  \\
			&=-2\mathrm{tr}
				\big((U(I_n - Z\overline{Z})\overline{U^T})^{-1}
					\big) \\
			&=-2\mathrm{tr}
				\big(U(I_n-Z\overline{Z})^{-1}U^{-1}
					\big)  \\
			&=-2\mathrm{tr}
				\big((I_n-Z\overline{Z})^{-1}\big) 
				=\mu^{\mathbb{T}}(Z),
	\end{align*}
	for every $Z \in D^{III}_n$. The last claim is now an immediate consequence of Definition~\ref{defi:momentmap_symbol}.
\end{proof} 

\begin{propo}\label{propo:muR+-GL(n,R)inv}
	Let $\mu^{\mathbb{R}_+} : \mathscr{S}_n \rightarrow \mathbb{R}$ be the moment map of the $\mathbb{R}_+$-action on $\mathscr{S}_n$ given in Proposition~\ref{propo:MomentMapR+}. Then, $\mu^{\mathbb{R}_+}$ is a $\mathrm{GL}(n,\mathbb{R})$-invariant function. In particular, we have $L^\infty(\mathscr{S}_n)^{\mu^{\mathbb{R}_+}} \subset L^\infty(\mathscr{S}_n)^{\mathrm{GL}(n,\mathbb{R})}$.
\end{propo}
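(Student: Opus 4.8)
The plan is to mimic closely the proof of Proposition~\ref{propo:muT-U(n)inv}, directly computing $\mu^{\mathbb{R}_+}(A \cdot Z)$ and showing it equals $\mu^{\mathbb{R}_+}(Z)$ for every $A \in \mathrm{GL}(n,\mathbb{R})$ and $Z \in \mathscr{S}_n$. Recall from subsection~\ref{subsec:ellhyperpar_actions} that the Hyperbolic Action is given by $A \cdot Z = A Z A^\top$, and from Proposition~\ref{propo:MomentMapR+} that $\mu^{\mathbb{R}_+}(Z) = -4\,\mathrm{tr}(\mathrm{Im}(Z)^{-1}\mathrm{Re}(Z))$.

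The crucial observation, and the step that makes everything work, is that $A$ is a \emph{real} matrix. Writing $Z = \mathrm{Re}(Z) + i\,\mathrm{Im}(Z)$, we get $A Z A^\top = A\,\mathrm{Re}(Z)\,A^\top + i\,A\,\mathrm{Im}(Z)\,A^\top$, and since both summands are real matrices this immediately gives $\mathrm{Re}(A Z A^\top) = A\,\mathrm{Re}(Z)\,A^\top$ and $\mathrm{Im}(A Z A^\top) = A\,\mathrm{Im}(Z)\,A^\top$. I would emphasize that this clean splitting of the real and imaginary parts is exactly what requires $A \in \mathrm{GL}(n,\mathbb{R})$ rather than a general complex matrix; it is the analogue here of the step in Proposition~\ref{propo:muT-U(n)inv} where $U \in \mathrm{U}(n)$ lets $U^\top \overline{U^\top}$ collapse.

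With the two parts identified, I would invert the imaginary part as $\mathrm{Im}(A Z A^\top)^{-1} = (A^{-1})^\top \mathrm{Im}(Z)^{-1} A^{-1}$ and substitute into the trace:
\begin{align*}
	\mathrm{tr}\big(\mathrm{Im}(A Z A^\top)^{-1} \mathrm{Re}(A Z A^\top)\big)
		&= \mathrm{tr}\big((A^{-1})^\top \mathrm{Im}(Z)^{-1} A^{-1} A\,\mathrm{Re}(Z)\,A^\top\big) \\
		&= \mathrm{tr}\big(A^\top (A^{-1})^\top \mathrm{Im}(Z)^{-1}\mathrm{Re}(Z)\big)
		 = \mathrm{tr}\big(\mathrm{Im}(Z)^{-1}\mathrm{Re}(Z)\big),
\end{align*}
where the inner $A^{-1}A$ cancels, cyclicity of the trace moves $A^\top$ to the front, and $A^\top (A^{-1})^\top = (A^{-1}A)^\top = I_n$. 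Multiplying by $-4$ yields $\mu^{\mathbb{R}_+}(A \cdot Z) = \mu^{\mathbb{R}_+}(Z)$, establishing the invariance.

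I do not expect a genuine obstacle here: the argument is a short direct computation, and the only place demanding care is tracking the transposes so that the two applications of trace cyclicity produce $A^\top(A^{-1})^\top = I_n$ rather than an uncancelled factor. Finally, the inclusion $L^\infty(\mathscr{S}_n)^{\mu^{\mathbb{R}_+}} \subset L^\infty(\mathscr{S}_n)^{\mathrm{GL}(n,\mathbb{R})}$ follows at once: any $\mu^{\mathbb{R}_+}$-symbol has the form $f \circ \mu^{\mathbb{R}_+}$ by Definition~\ref{defi:momentmap_symbol}, and the $\mathrm{GL}(n,\mathbb{R})$-invariance of $\mu^{\mathbb{R}_+}$ just proved forces $f \circ \mu^{\mathbb{R}_+}$ to be $\mathrm{GL}(n,\mathbb{R})$-invariant as well.
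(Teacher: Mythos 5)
Your proposal is correct and takes essentially the same route as the paper: the paper's proof is the identical direct computation $\mu^{\mathbb{R}_+}(A\cdot Z) = -4\,\mathrm{tr}\big((A\,\mathrm{Im}(Z)A^\top)^{-1}A\,\mathrm{Re}(Z)A^\top\big) = -4\,\mathrm{tr}\big(\mathrm{Im}(Z)^{-1}\mathrm{Re}(Z)\big)$ via cyclicity of the trace, with your (worthwhile) explicit justification that realness of $A$ gives $\mathrm{Re}(AZA^\top) = A\,\mathrm{Re}(Z)A^\top$ and $\mathrm{Im}(AZA^\top) = A\,\mathrm{Im}(Z)A^\top$ left implicit there. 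One immaterial quibble: in your parenthetical analogy to Proposition~\ref{propo:muT-U(n)inv}, the collapse used there is $U^\top\overline{U} = I_n$ (from $U^*U = I_n$), not ``$U^\top\overline{U^\top}$''; this side remark plays no role in your argument.
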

\begin{proof}
	Recall that $\mathrm{GL}(n,\mathbb{R})$ acts on $\mathscr{S}_n$ by $A\cdot Z = AZA^T$. We now use the expression of $\mu^{\mathbb{R}_+}$ obtained in Proposition~\ref{propo:MomentMapR+}, and for every $A \in \mathrm{GL}(n,\mathbb{R})$ we compute
	\begin{align*}
		\mu^{\mathbb{R}_+}(A\cdot Z)
			&= -4\mathrm{tr}
				\big((A\mathrm{Im}(Z) A^\top)^{-1} 
					A\mathrm{Re}(Z) A^\top
				\big)  \\
			&= -4\mathrm{tr}
				\big((A^\top)^{-1}\mathrm{Im}(Z)^{-1}A^{-1}
					A\mathrm{Re}(Z) A^\top
					\big)  \\
			&= -4\mathrm{tr}
				\big(\mathrm{Im}(Z)^{-1}
					\mathrm{Re}(Z)\big) 
				=\mu^{\mathbb{R}_+}(Z),
	\end{align*}
	for every $Z \in \mathscr{S}_n$. Again, the last claim now follows immediately. 
\end{proof}

For the Parabolic Action, it turns out that $\mathrm{Symm}(n,\mathbb{R})$-invariance and being a $\mu^{\mathrm{Symm}(n,\mathbb{R})}$-symbol are equivalent. This is the content of the next result.

\begin{propo}\label{propo:muSymm_and_invariance}
	For the moment map  $\mu^{\mathrm{Symm}(n,\mathbb{R})} : \mathscr{S}_n \rightarrow \mathrm{Symm}(n,\mathbb{R})$ of the $\mathrm{Symm}(n,\mathbb{R})$-action on $\mathscr{S}_n$ given in Proposition~\ref{propo:MomentMapSymm}, we have
	\[
		L^\infty(\mathscr{S}_n)^{\mu^{\mathrm{Symm}(n,\mathbb{R})}} 
		= L^\infty(\mathscr{S}_n)^{\mathrm{Symm}(n,\mathbb{R})}.
	\]
\end{propo}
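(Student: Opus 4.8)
The plan is to prove the two inclusions separately, first reducing the left-hand side via Proposition~\ref{propo:moment_symbols_explicit}(3). That proposition already identifies $L^\infty(\mathscr{S}_n)^{\mu^{\mathrm{Symm}(n,\mathbb{R})}}$ with the space of essentially bounded symbols of the form $Z \mapsto f(\mathrm{Im}(Z))$, for $f$ measurable. Hence the entire statement reduces to the single claim that a symbol $a \in L^\infty(\mathscr{S}_n)$ is $\mathrm{Symm}(n,\mathbb{R})$-invariant if and only if it depends, almost everywhere, only on $\mathrm{Im}(Z)$.

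The easy inclusion is $L^\infty(\mathscr{S}_n)^{\mu^{\mathrm{Symm}(n,\mathbb{R})}} \subseteq L^\infty(\mathscr{S}_n)^{\mathrm{Symm}(n,\mathbb{R})}$. If $a(Z) = f(\mathrm{Im}(Z))$ for almost every $Z$, then since every $S \in \mathrm{Symm}(n,\mathbb{R})$ is real we have $\mathrm{Im}(S \cdot Z) = \mathrm{Im}(Z + S) = \mathrm{Im}(Z)$, so $a(S \cdot Z) = a(Z)$ for almost every $Z$ and every $S$; thus $a$ is invariant. Equivalently, this inclusion is an instance of the general fact that, for Abelian actions, any function of the moment map inherits the invariance of $\mu^{\mathrm{Symm}(n,\mathbb{R})}$ itself, which was verified in the proof of Proposition~\ref{propo:MomentMapSymm}.

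For the converse inclusion I would use the tube-type decomposition $\mathscr{S}_n = \mathrm{Symm}(n,\mathbb{R}) \oplus i\Omega_n$ recorded in subsection~\ref{subsec:BergmanToeplitz}, writing $Z = X + iY$ with $(X,Y) \in \mathrm{Symm}(n,\mathbb{R}) \times \Omega_n$ and noting that the Lebesgue measure $\dif Z$ factors as a product of Lebesgue measures on $\mathrm{Symm}(n,\mathbb{R})$ and on $\Omega_n$. Under this identification the Parabolic Action becomes translation in the first factor, $S \cdot (X,Y) = (X+S,Y)$. Given an invariant $a$, the equality $a(Z + S) = a(Z)$ holds, for each fixed $S$, for almost every $Z$; applying Fubini's theorem on the product $\mathrm{Symm}(n,\mathbb{R}) \times \mathscr{S}_n$ then upgrades this to the statement that, for almost every $Y \in \Omega_n$, the slice $X \mapsto a(X + iY)$ is almost everywhere constant. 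Taking $f(Y)$ to be that constant value yields a measurable, essentially bounded function with $a(Z) = f(\mathrm{Im}(Z))$ almost everywhere, so that $a \in L^\infty(\mathscr{S}_n)^{\mu^{\mathrm{Symm}(n,\mathbb{R})}}$ by Proposition~\ref{propo:moment_symbols_explicit}(3).

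The only genuinely delicate point is this last measure-theoretic step: the invariance hypothesis is quantified as ``for each $S$, for almost every $Z$,'' whereas the desired conclusion requires ``for almost every $Z$, constancy along the whole $\mathrm{Symm}(n,\mathbb{R})$-orbit.'' Interchanging these quantifiers is exactly where the product structure of the measure and Fubini's theorem are needed, together with the $\sigma$-finiteness of the factors and the completeness used to define $f$ on a conull subset of $\Omega_n$. Everything else is a direct substitution, and once $f$ is produced the proof closes immediately through Proposition~\ref{propo:moment_symbols_explicit}(3).
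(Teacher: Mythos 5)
Your proposal is correct and takes essentially the same route as the paper: both reduce the statement, via Proposition~\ref{propo:moment_symbols_explicit}(3), to the equivalence between $\mathrm{Symm}(n,\mathbb{R})$-invariance and almost-everywhere dependence on $\mathrm{Im}(Z)$ under the tube decomposition $\mathscr{S}_n = \mathrm{Symm}(n,\mathbb{R}) \oplus i\Omega_n$. The paper states this equivalence as a bare chain of equivalences, whereas you additionally supply the Fubini quantifier-interchange argument that justifies the nontrivial direction; this fills in detail the paper leaves implicit rather than constituting a different proof.
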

\begin{proof}
	The $\mathrm{Symm}(n,\mathbb{R})$-action on $\mathscr{S}_n$ is given by the expression $S\cdot Z=Z+S$. Hence, for a given $a \in L^{\infty}(\mathscr{S}_n)$ we have the following sequence of equivalences 
	\begin{align*}
		a \text{ is }  
		&\mathrm{Symm}(n,\mathbb{R})\text{-invariant} \\
		&\Longleftrightarrow 
			\text{ for every } S \in \mathrm{Symm}(n,\mathbb{R}):\;
			a(Z+S)=a(Z) 
				\text{ for a.e. } Z \in \mathscr{S}_n  \\	
		&\Longleftrightarrow 
			\text{ for some measurable } f:\;
			a(Z)=f(\mathrm{Im}(Z)) 
				\text{ for a.e. } Z \in \mathscr{S}_n,
	\end{align*}
	and the result follows from the last case in Proposition~\ref{propo:moment_symbols_explicit}.
\end{proof}

\subsection{Commuting Toeplitz operators with moment maps symbols}
\label{subsec:comm_moment_map_symbols}
We now state one of our main results: for $D$ either of the domains $D^{III}_n$ or $\mathscr{S}_n$, there are three Abelian groups of biholomorphisms of $D$ to which we can associate commutative $C^*$-algebras generated by Toeplitz operators.

\begin{teo}\label{teo:commToep_3Abeliangroups}
	Let $D$ be either of the domains $D^{III}_n$ or $\mathscr{S}_n$. The Abelian Elliptic Action, the Abelian Hyperbolic Action and the Parabolic Action on $D$ yield three Abelian groups of biholomorphisms of $D$ which provide, for every $\lambda > n$, the following commutative $C^*$-algebras generated by Toeplitz operators.
	\begin{description}
		\item[Abelian Elliptic] The $C^*$-algebra $\mathcal{T}^{(\lambda)} \big(L^\infty(D^{III}_n)^{\mu^\mathbb{T}}\big)$, acting on $\mathcal{A}^2_\lambda(D^{III}_n)$, obtained from the moment map of the $\mathbb{T}$-action on $D^{III}_n$. 
		\item[Abelian Hyperbolic] The $C^*$-algebra $\mathcal{T}^{(\lambda)}
		\big(L^\infty(\mathscr{S}_n)^{\mu^{\mathbb{R}_+}}\big)$, acting on $\mathcal{A}^2_\lambda(\mathscr{S}_n)$, obtained from the moment map of the $\mathbb{R}_+$-action on $\mathscr{S}_n$.
		\item[Parabolic] The $C^*$-algebra $\mathcal{T}^{(\lambda)}
		\big(L^\infty(\mathscr{S}_n)^{\mu^{\mathrm{Symm}(n,\mathbb{R})}}
		\big)$, acting on $\mathcal{A}^2_\lambda(\mathscr{S}_n)$, obtained from the moment map of the $\mathrm{Symm}(n,\mathbb{R})$-action on $\mathscr{S}_n$.
	\end{description}
\end{teo}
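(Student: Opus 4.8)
The plan is to deduce all three cases from the commutativity results already established for the larger invariant-symbol algebras, namely Theorems~\ref{teo:comm_GL_U} and \ref{teo:comm_Symm}, by exploiting the fact that the moment map symbols of each Abelian action carry the extra invariance recorded in Propositions~\ref{propo:muT-U(n)inv}, \ref{propo:muR+-GL(n,R)inv} and \ref{propo:muSymm_and_invariance}. The single elementary principle I would invoke throughout is the following: if $\mathcal{A} \subset \mathcal{B} \subset L^\infty(D)$, then the inclusion of the corresponding generating families of Toeplitz operators yields $\mathcal{T}^{(\lambda)}(\mathcal{A}) \subset \mathcal{T}^{(\lambda)}(\mathcal{B})$, and a $C^*$-subalgebra of a commutative $C^*$-algebra is itself commutative.

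For the Abelian Elliptic case, Proposition~\ref{propo:muT-U(n)inv} gives the containment $L^\infty(D^{III}_n)^{\mu^\mathbb{T}} \subset L^\infty(D^{III}_n)^{\mathrm{U}(n)}$. Applying the principle above I obtain $\mathcal{T}^{(\lambda)}(L^\infty(D^{III}_n)^{\mu^\mathbb{T}}) \subset \mathcal{T}^{(\lambda)}(L^\infty(D^{III}_n)^{\mathrm{U}(n)})$, and since the right-hand algebra is commutative by Theorem~\ref{teo:comm_GL_U}, so is the left-hand one. The Abelian Hyperbolic case is entirely analogous: Proposition~\ref{propo:muR+-GL(n,R)inv} supplies the containment $L^\infty(\mathscr{S}_n)^{\mu^{\mathbb{R}_+}} \subset L^\infty(\mathscr{S}_n)^{\mathrm{GL}(n,\mathbb{R})}$, and the commutativity of $\mathcal{T}^{(\lambda)}(L^\infty(\mathscr{S}_n)^{\mathrm{GL}(n,\mathbb{R})})$ from Theorem~\ref{teo:comm_GL_U} descends to the subalgebra generated by the moment map symbols.

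For the Parabolic case, Proposition~\ref{propo:muSymm_and_invariance} gives the stronger statement that the moment map symbols coincide exactly with the $\mathrm{Symm}(n,\mathbb{R})$-invariant symbols, whence $\mathcal{T}^{(\lambda)}(L^\infty(\mathscr{S}_n)^{\mu^{\mathrm{Symm}(n,\mathbb{R})}}) = \mathcal{T}^{(\lambda)}(L^\infty(\mathscr{S}_n)^{\mathrm{Symm}(n,\mathbb{R})})$, and the latter is commutative by Theorem~\ref{teo:comm_Symm}.

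I do not expect a genuine obstacle here, since the technical content has been front-loaded into the preparatory propositions: the heart of the matter is precisely the additional group invariance of the moment map symbols, established by the explicit trace computations underlying Propositions~\ref{propo:muT-U(n)inv} and \ref{propo:muR+-GL(n,R)inv}, together with the characterization in Proposition~\ref{propo:muSymm_and_invariance}. The only point requiring any care is to make explicit that passing from a containment of symbol sets to a containment of generated $C^*$-algebras is legitimate, which is immediate from the definition of the generated $C^*$-algebra as the closure of the $*$-algebra generated by the associated Toeplitz operators.
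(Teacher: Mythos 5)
Your proposal is correct and follows essentially the same route as the paper: the Abelian Elliptic and Abelian Hyperbolic cases are reduced via the inclusions from Propositions~\ref{propo:muT-U(n)inv} and \ref{propo:muR+-GL(n,R)inv} to Theorem~\ref{teo:comm_GL_U}, and the Parabolic case via the equality of Proposition~\ref{propo:muSymm_and_invariance} to Theorem~\ref{teo:comm_Symm}. Your explicit remark that a containment of symbol sets yields a containment of the generated $C^*$-algebras, and that a $C^*$-subalgebra of a commutative algebra is commutative, simply makes precise what the paper's proof uses implicitly.
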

\begin{proof}
	First, we note that Propositions~\ref{propo:muT-U(n)inv} and \ref{propo:muR+-GL(n,R)inv} imply the inclusions
	\begin{align*}
		\mathcal{T}^{(\lambda)}
			(L^\infty(D^{III}_n)^{\mu^\mathbb{T}})
			&\subset
			 \mathcal{T}^{(\lambda)}
			 	(L^\infty(D^{III}_n)^{\mathrm{U}(n)}) \\
		\mathcal{T}^{(\lambda)}
			(L^\infty(\mathscr{S}_n)^{\mu^{\mathbb{R}_+}})
			&\subset 
			\mathcal{T}^{(\lambda)}
			(L^\infty(\mathscr{S}_n)^{\mathrm{GL}(n,\mathbb{R})}),
	\end{align*}
	and so the cases of the Abelian Elliptic and Abelian Hyperbolic Actions follow from Theorem~\ref{teo:comm_GL_U}.
	
	For the Parabolic Action, we note that Proposition~\ref{propo:muSymm_and_invariance} yields the identity
	\[
		\mathcal{T}^{(\lambda)}
			(L^\infty(\mathscr{S}_n)^{\mu^{\mathrm{Symm}(n,\mathbb{R})}})
			= \mathcal{T}^{(\lambda)}
			(L^\infty(\mathscr{S}_n)^{\mathrm{Symm}(n,\mathbb{R})}),
	\]
	and now the result is a consequence of Theorem~\ref{teo:comm_Symm}.
\end{proof}

\begin{remark}\label{rmk:3Abelian_DIII_vs_Disk}
	It follows from the discussion in subsection~\ref{subsec:ellhyperpar_actions} (see Corollary~\ref{coro:3Actionn=1} and Definition~\ref{defi:AbelianEllHyp}) that for $n=1$ the three actions considered in Theorem~\ref{teo:commToep_3Abeliangroups} reduce to the usual elliptic, hyperbolic and parabolic actions known from complex analysis. These three actions have been previously used to obtain commutative $C^*$-algebras generated by Toeplitz operators, notably in the results found in~\cite{GKVElliptic,GKVHyperbolic,GKVParabolic,GQVJFA} (see also \cite{KorenblumZhu1995}). In fact, the commutative $C^*$-algebras generated by Toeplitz operators from Theorem~\ref{teo:commToep_3Abeliangroups} reduce to those from these previous works when $n = 1$.
	
	One of the main guiding lights in this line of study of Toeplitz operators has been to find generalizations to higher dimensions of these commutative $C^*$-algebras observed in the case of the unit disk. This was achieved for the unit ball $\mathbb{B}^n$ in $\mathbb{C}^n$ through the use of maximal Abelian subgroups of the biholomorphism group of $\mathbb{B}^n$ (see~\cite{QVBallI,QVBallII}). However, the result for the unit ball $\mathbb{B}^n$ from these references lead to $n+2$ different commutative $C^*$-algebras, which is in contrast with the simplicity of only three Abelian groups for the case of the unit disk. 
	
	On the other hand, our Theorem~\ref{teo:commToep_3Abeliangroups} recovers for higher dimensions the simplicity observed in the case of the unit disk. More precisely, we consider the generalized unit disk $D^{III}_n$ and its unbounded realization $\mathscr{S}_n$, Siegel's generalized upper half-plane. For these domains, Theorem~\ref{teo:commToep_3Abeliangroups} yields three commutative $C^*$-algebras generated by Toeplitz operators, acting on the Bergman spaces of $D^{III}_n$ and $\mathscr{S}_n$, which can be seen as natural extensions of the case of the unit disk. And this is achieved while using only three Abelian groups for any dimension. This is possible due to the fact that we have replaced invariant symbols with moment map symbols. This highlights the importance of using symplectic geometry to study Toeplitz operators acting on Bergman spaces in higher dimensions.	
\end{remark}

\section{Spectral integral formulas for Toeplitz operator with moment map symbols}
\label{sec:SpectralIntegrals}

In this final section we present explicit integral formulas that simultaneously diagonalize Toeplitz operators. This will be done for the Abelian Elliptic and Parabolic Actions.

\subsection{Toeplitz operators with Abelian Elliptic symbols}
In this case, we are dealing with symbols that belong to $\mathcal{T}^{(\lambda)} (L^\infty(D^{III}_n)^{\mu^\mathbb{T}})
\subset \mathcal{T}^{(\lambda)} (L^\infty(D^{III}_n)^{\mathrm{U}(n)})$.
In particular, it is useful to consider the $\mathrm{U}(n)$-action on the Bergman spaces over $D^{III}_n$. We recall some properties of such action and refer to \cite{Upmeier,DQRadial} for further details.

For every $\lambda > n$, there is a unitary representation given by 
\begin{align*}
	\pi_\lambda : \mathrm{U}(n) \times \mathcal{A}^2_\lambda(D^{III}_n) &\longrightarrow \mathcal{A}^2_\lambda(D^{III}_n) \\
	\pi_\lambda(A)(f) &= f \circ A^{-1}.
\end{align*}
This representation leaves invariant the subspace of (holomorphic) polynomials on $D^{III}_n \subset \mathrm{Symm}(n,\mathbb{C})$, that we will denote by $\mathcal{P}(\mathrm{Symm}(n,\mathbb{C})) = \mathcal{P}$, for simplicity. In particular, for every $\lambda > n$, the decomposition of $\mathcal{A}^2_\lambda(D^{III}_n)$ into irreducible $\mathrm{U}(n)$-submodules is the same as the one corresponding to the $\mathrm{U}(n)$-action on $\mathcal{P}$. Let us denote by $\overrightarrow{\mathbb{N}}^n$ the set of integer $n$-tuples that satisfy $\alpha_1 \geq \dots \geq \alpha_n \geq 0$. Then, using the representation $\pi_\lambda$, one can show that, for every $\lambda > n$, there is a Hilbert direct sum decomposition 
\begin{equation}\label{eq:PdecompPalpha}
	\mathcal{A}^2_\lambda(D^{III}_n) = 
		\bigoplus_{\alpha \in \overrightarrow{\mathbb{N}}^n}
			\mathcal{P}^\alpha,
\end{equation}
where $\big(\mathcal{P}^\alpha 
\big)_{\alpha \in \overrightarrow{\mathbb{N}}^n}$ is a family of mutually non-isomorphic $\mathrm{U}(n)$-submodules of~$\mathcal{P}$. For the proof of this claim we refer to \cite[Chapter~2]{Upmeier} (see also \cite{DQRadial,Johnson}). 

We consider the polynomials given by
\[
	\Delta_j(Z) = \det(Z_j)
\]
where $Z_j$ is the upper-left corner $j\times j$ submatrix of $Z$. For every $\alpha \in \overrightarrow{\mathbb{N}}^n$ we will also consider the polynomial
\[
	\Delta_\alpha(Z) = \prod_{j=1}^n 
			\Delta_j(Z)^{\alpha_j - \alpha_{j+1}}
\] 
where we agree to define $\alpha_{n+1} = 0$. These are known as the conical polynomials for the representation of $\mathrm{U}(n)$ on $\mathcal{P}$ (see \cite[Chapter~2]{Upmeier}).

With the previous notation, the following result is an application of Proposition~4.7 and Theorem~4.11 from \cite{DQRadial} to our current setup.

\begin{teo}[\cite{DQRadial}]\label{teo:Toeplitz_U(n)-invariant}
	Let $a \in L^\infty(D^{III}_n)^{\mathrm{U}(n)}$ and $\lambda > n$ be given. Then, the Toeplitz operator $T^{(\lambda)}_a$ acting on the Bergman space $\mathcal{A}^2_\lambda(D^{III}_n)$ preserves the Hilbert direct sum~\eqref{eq:PdecompPalpha}. Furthermore, we have
	\[
		T^{(\lambda)}_a|_{\mathcal{P}^\alpha} =
			c_{a,\lambda}(\alpha) I_{\mathcal{P}^\alpha},
	\]
	where the complex constant $c_{a,\lambda}(\alpha)$ is given by
	\[
		c_{a,\lambda}(\alpha) = 
		\frac{\displaystyle\int\limits_{0 < X < I_n} a(\sqrt{X}) 
				\Delta_\alpha(X) \Delta_n(I_n - X)^{\lambda-n-1} 
				\dif X }%
			{\displaystyle\int\limits_{0 < X < I_n} 
				\Delta_\alpha(X) \Delta_n(I_n - X)^{\lambda-n-1} 
				\dif X },
	\]
	for every $\alpha \in \overrightarrow{\mathbb{N}}^n$. The condition $0 < X < I_n$ denotes an open subset of $\mathrm{Symm}(n,\mathbb{R})$ and $\dif X$ the Lebesgue measure on the latter.
\end{teo}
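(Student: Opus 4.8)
The plan is to derive the statement as an application of the theory of Toeplitz operators with $\mathrm{U}(n)$-invariant (radial) symbols developed in \cite{DQRadial}. The conceptual heart of the argument is to recognize that $T^{(\lambda)}_a$ intertwines the representation $\pi_\lambda$, so that Schur's lemma forces it to act diagonally on \eqref{eq:PdecompPalpha}; the explicit eigenvalues then come from a polar integration formula for $D^{III}_n$.

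First I would record the equivariance. The action $U\cdot Z = UZU^\top$ preserves the measure $v_\lambda$: the weight is invariant because $I_n - (UZU^\top)\overline{(UZU^\top)} = U(I_n - Z\overline{Z})U^{-1}$ has constant determinant, while the complex Jacobian of the complex-linear map $Z \mapsto UZU^\top$ has modulus one. Hence each $\pi_\lambda(A)$ is unitary on $L^2(D^{III}_n,v_\lambda)$ and restricts to a unitary of $\mathcal{A}^2_\lambda(D^{III}_n)$, so the Bergman projection $B_{D^{III}_n,\lambda}$ commutes with every $\pi_\lambda(A)$, being the orthogonal projection onto a $\pi_\lambda$-invariant subspace. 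A direct computation gives $\pi_\lambda(A) M_a \pi_\lambda(A)^{-1} = M_{a\circ A^{-1}}$, so the $\mathrm{U}(n)$-invariance of $a$ forces $M_a$, and therefore $T^{(\lambda)}_a = B_{D^{III}_n,\lambda} M_a$, to commute with the whole representation $\pi_\lambda$.

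Next I would apply Schur's lemma. By \eqref{eq:PdecompPalpha} the space $\mathcal{A}^2_\lambda(D^{III}_n)$ is a Hilbert sum of the mutually non-isomorphic irreducible $\mathrm{U}(n)$-modules $\mathcal{P}^\alpha$. An operator commuting with $\pi_\lambda$ cannot carry $\mathcal{P}^\alpha$ into $\mathcal{P}^\beta$ for $\beta \neq \alpha$, since there are no nonzero intertwiners between inequivalent irreducibles, and it must act as a scalar on each $\mathcal{P}^\alpha$; this gives $T^{(\lambda)}_a|_{\mathcal{P}^\alpha} = c_{a,\lambda}(\alpha) I_{\mathcal{P}^\alpha}$ and in particular shows that the decomposition is preserved. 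To identify the scalar I would pair against the conical polynomial $\Delta_\alpha \in \mathcal{P}^\alpha$; since $\Delta_\alpha$ is holomorphic, $B_{D^{III}_n,\lambda}\Delta_\alpha = \Delta_\alpha$, and therefore
\[
	c_{a,\lambda}(\alpha) =
	\frac{\langle T^{(\lambda)}_a \Delta_\alpha, \Delta_\alpha\rangle_\lambda}
		{\langle \Delta_\alpha, \Delta_\alpha\rangle_\lambda} =
	\frac{\displaystyle\int_{D^{III}_n} a(Z)\,|\Delta_\alpha(Z)|^2 \,\dif v_\lambda(Z)}
		{\displaystyle\int_{D^{III}_n} |\Delta_\alpha(Z)|^2\,\dif v_\lambda(Z)}.
\]

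It remains to rewrite this ratio as the stated integral over $\{0 < X < I_n\} \subset \mathrm{Symm}(n,\mathbb{R})$, and this is where the main work lies. The tool is a polar-type integration formula for $D^{III}_n$ coming from the Takagi (Autonne) factorization $Z = U\sqrt{X}\,U^\top$, with $U \in \mathrm{U}(n)$ and $X$ the matrix of squared singular values, under which a $\mathrm{U}(n)$-invariant integrand reduces to an integral over the singular-value data weighted by a Jacobian; on the representative $\sqrt{X}$ one has $Z\overline{Z} = X$, so the weight $\det(I_n - Z\overline{Z})^{\lambda-n-1}$ becomes $\Delta_n(I_n - X)^{\lambda-n-1}$ and $a(Z)$ becomes $a(\sqrt{X})$. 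The essential point, which is precisely the content of Proposition~4.7 and Theorem~4.11 of \cite{DQRadial}, is that averaging $|\Delta_\alpha(Z)|^2$ over the $\mathrm{U}(n)$-orbit produces $\Delta_\alpha(X)$ up to an $\alpha$-dependent constant that cancels between numerator and denominator. Carrying out this reduction in both integrals yields the claimed formula. The hard part is exactly this angular-averaging identity for the conical polynomials together with the accompanying Weyl-type integration formula; granting the results of \cite{DQRadial}, the remaining equivariance and Schur arguments above are routine.
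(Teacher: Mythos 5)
Your proposal is correct and takes essentially the same route as the paper: the paper gives no independent proof of this statement, deriving it directly as an application of Proposition~4.7 and Theorem~4.11 of \cite{DQRadial}, and your equivariance-plus-Schur skeleton, with the eigenvalue computed by pairing against the conical polynomial and the final reduction deferred to those same cited results, is precisely the argument underlying that citation. One cosmetic remark: the $\mathrm{U}(n)$-orbit average of $|\Delta_\alpha(Z)|^2$ is a constant multiple of the spherical polynomial $\int_{\mathrm{O}(n)}\Delta_\alpha(OXO^\top)\,\dif O$ rather than of $\Delta_\alpha(X)$ itself, but since every other factor in the integrand (the symbol, the weight, the domain $0<X<I_n$ and the measure $\dif X$) is $\mathrm{O}(n)$-invariant, the two may be interchanged under the integral and the stated ratio is unaffected --- exactly the averaging that the paper later makes explicit through the function $h_\alpha$ in Theorem~\ref{teo:Toeplitz_U(n)-invariant-(0,1)n}.
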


Note that the integrals in Theorem~\ref{teo:Toeplitz_U(n)-invariant} are taken over an open subset of the vector space $\mathrm{Symm}(n,\mathbb{R})$, which is $n(n+1)/2$-dimensional. We will now simplify these expressions, and so the results of \cite{DQRadial}, to obtain integral formulas over lower dimensional spaces for Toeplitz operators with $\mathrm{U}(n)$-invariant symbols. Our goal is to reduce the number of variables over which the corresponding symbols have to be integrated. More precisely, we will obtain integral formulas involving the set
\[
	\overrightarrow{(0,1)}^n 
	= \{x \in (0,1)^n \mid x_n > \dots > x_1 > 0\},
\]
which is only $n$-dimensional. In the rest of this work, we will denote by $D(x)$ the diagonal $n \times n$ matrix with diagonal elements given by $x \in \mathbb{R}^n$. Also, for a given $x \in \mathbb{R}_+^n$ we will write $\sqrt{x} = (\sqrt{x}_1, \dots, \sqrt{x}_n)$.

\begin{teo}\label{teo:Toeplitz_U(n)-invariant-(0,1)n}
	Let $a \in L^\infty(D^{III}_n)^{\mathrm{U}(n)}$ and $\lambda > n$ be given. Then, the complex constants $(c_{a,\lambda}(\alpha))_{\alpha \in \overrightarrow{\mathbb{N}}^n}$ such that
	\[
		T^{(\lambda)}_a|_{\mathcal{P}^\alpha} =
			c_{a,\lambda}(\alpha) I_{\mathcal{P}^\alpha},
	\]
	for every $\alpha \in \overrightarrow{\mathbb{N}}^n$, as obtained in Theorem~\ref{teo:Toeplitz_U(n)-invariant}, are given by
	\[
		c_{a,\lambda}(\alpha) = 
		\frac{\displaystyle\int\limits_{\overrightarrow{(0,1)}^n}
			a(D(\sqrt{x})) H_{\alpha_n,\lambda}(x) h_\alpha(x) \dif x}%
			{\displaystyle\int\limits_{\overrightarrow{(0,1)}^n}
			H_{\alpha_n,\lambda}(x) h_\alpha(x) \dif x},
	\]
	where the functions $H_{\alpha_n,\lambda}, h_\alpha: \overrightarrow{(0,1)}^n \rightarrow [0,\infty)$ are defined by
	\begin{align*}
		H_{\alpha_n,\lambda}(x) &=
			\bigg(\prod_{j=1}^n x_j\bigg)^{\alpha_n}
			\bigg(\prod_{j=1}^n (1 - x_j)\bigg)^{\lambda - n - 1}
			\bigg(\prod_{j<k}(x_j - x_k)\bigg) \\
		h_\alpha(x) &= \int\limits_{\mathrm{O}(n)}
			\prod_{j=1}^{n-1} \Delta_j(A D(x) 
					A^\top)^{\alpha_j - \alpha_{j+1}} \dif A
	\end{align*}
	for every $\alpha \in \overrightarrow{\mathbb{N}}^n$, and $\dif A$ is a fixed Haar measure on $\mathrm{O}(n)$.
\end{teo}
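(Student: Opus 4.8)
The plan is to reduce the integral of Theorem~\ref{teo:Toeplitz_U(n)-invariant}, which is taken over the $n(n+1)/2$-dimensional open set $\{0<X<I_n\}\subset\mathrm{Symm}(n,\mathbb{R})$, to an $n$-dimensional integral over eigenvalues by diagonalizing. Every $X$ with $0<X<I_n$ can be written as $X=AD(x)A^\top$ with $A\in\mathrm{O}(n)$ and eigenvalues $x_j\in(0,1)$, and the set where the eigenvalues coincide has Lebesgue measure zero. The engine of the proof is the Weyl-type spectral integration formula on $\mathrm{Symm}(n,\mathbb{R})$: pushing forward $\dif X$ under the map $(A,x)\mapsto AD(x)A^\top$ produces Haar measure $\dif A$ on $\mathrm{O}(n)$ times $\dif x$ weighted by a Vandermonde Jacobian. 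Restricting $x$ to the ordered chamber $\overrightarrow{(0,1)}^n$ absorbs the $2^n$-fold redundancy coming from sign flips $A\mapsto A\,\mathrm{diag}(\pm1,\dots,\pm1)$ into a single dimensional constant $c_n$, so that for integrable $\Phi$ one has
\[
	\int_{0<X<I_n}\Phi(X)\,\dif X
	= c_n\int_{\overrightarrow{(0,1)}^n}\Big(\prod_{j<k}(x_j-x_k)\Big)
		\int_{\mathrm{O}(n)}\Phi\big(AD(x)A^\top\big)\,\dif A\,\dif x,
\]
the Vandermonde factor being understood up to the global sign fixed by the chamber.

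First I would apply this formula to both the numerator and the denominator appearing in Theorem~\ref{teo:Toeplitz_U(n)-invariant}, identifying $\Phi(X)=a(\sqrt{X})\Delta_\alpha(X)\Delta_n(I_n-X)^{\lambda-n-1}$ and $\Phi(X)=\Delta_\alpha(X)\Delta_n(I_n-X)^{\lambda-n-1}$, respectively. Next I would exploit three reductions on the inner $\mathrm{O}(n)$-integral. Since $\mathrm{O}(n)\subset\mathrm{U}(n)$ and $a$ is $\mathrm{U}(n)$-invariant, while $\sqrt{AD(x)A^\top}=AD(\sqrt{x})A^\top$ because the positive square root commutes with conjugation, we get $a(\sqrt{X})=a\big(AD(\sqrt{x})A^\top\big)=a(D(\sqrt{x}))$, which is independent of $A$ and hence pulls out of the $\mathrm{O}(n)$-integral; this is the step that makes the whole simplification possible. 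The factor $\Delta_n(I_n-X)^{\lambda-n-1}=\det(I_n-X)^{\lambda-n-1}=\prod_{j}(1-x_j)^{\lambda-n-1}$ is conjugation-invariant and pulls out as well. Finally, writing $\Delta_\alpha(X)=\prod_{j=1}^n\Delta_j(X)^{\alpha_j-\alpha_{j+1}}$ with $\alpha_{n+1}=0$, the $j=n$ term is $\det(X)^{\alpha_n}=\big(\prod_j x_j\big)^{\alpha_n}$, again invariant under $A\mapsto AD(x)A^\top$, so it too comes out, leaving precisely
\[
	\int_{\mathrm{O}(n)}\prod_{j=1}^{n-1}\Delta_j\big(AD(x)A^\top\big)^{\alpha_j-\alpha_{j+1}}\,\dif A
	= h_\alpha(x).
\]

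Collecting the scalar factors $\big(\prod_j x_j\big)^{\alpha_n}$, $\prod_j(1-x_j)^{\lambda-n-1}$ and the Vandermonde $\prod_{j<k}(x_j-x_k)$ into the single function $H_{\alpha_n,\lambda}(x)$, the numerator becomes $c_n\int_{\overrightarrow{(0,1)}^n}a(D(\sqrt{x}))H_{\alpha_n,\lambda}(x)h_\alpha(x)\,\dif x$ and the denominator $c_n\int_{\overrightarrow{(0,1)}^n}H_{\alpha_n,\lambda}(x)h_\alpha(x)\,\dif x$. Taking the quotient cancels the constant $c_n$ together with the global sign of the Vandermonde, yielding exactly the stated formula for $c_{a,\lambda}(\alpha)$; I would also remark that $h_\alpha(x)>0$ since $AD(x)A^\top$ is positive definite for $x>0$, so all its principal minors are positive. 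The main obstacle I anticipate is making the spectral change of variables fully rigorous: establishing the precise Weyl integration formula with its Vandermonde Jacobian, correctly handling the chamber ordering of $\overrightarrow{(0,1)}^n$ and the $2^n$-fold covering by sign changes, and justifying that the coincident-eigenvalue locus is negligible. Once this measure-theoretic bookkeeping is in place, the algebraic identifications of $H_{\alpha_n,\lambda}$ and $h_\alpha$ are routine.
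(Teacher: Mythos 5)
Your proposal is correct and follows essentially the same route as the paper: polar coordinates $X = AD(x)A^\top$ on the cone, pulling the $\mathrm{U}(n)$-invariant factor $a(\sqrt{X}) = a(D(\sqrt{x}))$, the determinant powers $\det(X)^{\alpha_n}$ and $\det(I_n - X)^{\lambda-n-1}$ out of the $\mathrm{O}(n)$-integral, and cancelling the overall constant in the quotient. The measure-theoretic step you flag as the main obstacle is exactly what the paper disposes of by citing Proposition~1.3.63 of Upmeier (using rank $n$ and characteristic multiplicity $1$ of $\Omega_n$), so no new argument is needed there.
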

\begin{proof}
	Let us denote
	\[
		I_{a,\lambda}(\alpha) = \int\limits_{0 < X < I_n} a(\sqrt{X}) 
			\Delta_\alpha(X) \Delta_n(I_n - X)^{\lambda-n-1} 
				\dif X.
	\]
	As noted above, this integral is taken over the open subset of  $\Omega_n$ that consists of the matrices $X$ satisfying $0 < X < I_n$. The linear maps that leave invariant $\Omega_n$ is realized by the action of the group $\mathrm{GL}(n,\mathbb{R})$ given by 
	\[
		A \cdot X = A X A^\top,
	\]
	where $A \in \mathrm{GL}(n,\mathbb{R})$ and $X \in \Omega_n$. In particular, the isotropy group of symmetries of the cone $\Omega_n$ that fixes $I_n$ is realized by the corresponding action of the group $\mathrm{O}(n)$. We refer to \cite[Section~1.3]{Upmeier} for the proof of these claims.
	
	By the previous discussion, it follows from \cite[Proposition~1.3.63]{Upmeier} that there is a constant $C > 0$ such that
	\begin{align*}
		I_{a,\lambda}(\alpha) = C \int\limits_{\overrightarrow{(0,1)}^n}
					&\int\limits_{\mathrm{O}(n)}
					a\Big(\sqrt{AD(x)A^\top}\Big)
						\Delta_\alpha\big(AD(x)A^\top\big) \times \\
					&\times\Delta_n
						\big(I_n - AD(x)A^\top\big)^{\lambda -n -1}
						\prod_{j<k}(x_j - x_k) 
						\dif A \dif x.
	\end{align*}
	Note that we have used the fact that the symmetric cone $\Omega_n$ has rank $n$ and characteristic multiplicity $a = 1$. We now observe that for every $A \in \mathrm{O}(n)$ and $x \in \overrightarrow{(0,1)}^n$ we have
	\[
		a\Big(\sqrt{AD(x)A^\top}\Big) 
			= a\big(AD(\sqrt{x})A^\top\big)
			= a(A\cdot D(\sqrt{x})) = a(D(\sqrt{x})),
	\]
	because $a$ is $\mathrm{U}(n)$-invariant. On the other hand, we have
	\begin{align*}
		\Delta_\alpha\big(AD(x)A^\top\big) 
			&= \prod_{j=1}^{n-1} \Delta_j\big(AD(x)A^\top
				\big)^{\alpha_j - \alpha_{j+1}}	
				\det\big(AD(x)A^\top\big)^{\alpha_n}	\\
			&= \prod_{j=1}^{n-1} \Delta_j\big(AD(x)A^\top 
				\big)^{\alpha_j - \alpha_{j+1}}	
				\det(D(x))^{\alpha_n} \\
			&= \prod_{j=1}^{n-1} \Delta_j\big(AD(x)A^\top 
				\big)^{\alpha_j - \alpha_{j+1}}	
				\bigg(\prod_{j=1}^n x_j\bigg)^{\alpha_n}	
	\end{align*}
	and a similar computation yields
	\begin{align*}
		\Delta_n\big(I_n - AD(x)A^\top\big)^{\lambda -n -1}
			&= \det(I_n - D(x))^{\lambda -n -1} \\
			&= \bigg(\prod_{j=1}^n (1 - x_j)\bigg)^{\lambda - n - 1},
	\end{align*}
	where the last two computations hold for every $A \in \mathrm{O}(n)$ and $x \in \overrightarrow{(0,1)}^n$ as~well. Collecting these identities we obtain
	\[
		I_{a,\lambda}(\alpha) = C \int\limits_{\overrightarrow{(0,1)}^n}
			a(D(\sqrt{x})) H_{\alpha_n,\lambda}(x) h_\alpha(x) 
					\dif x,
	\]
	where $H_{\alpha_n,\lambda}$ and $h_\alpha$ are given as in the statement. The result now follows once we observe that
	\[
		c_{a,\lambda}(\alpha) 
		= \frac{I_{a,\lambda}(\alpha)}{I_{1,\lambda}(\alpha)},
	\]
	for every $\alpha \in \overrightarrow{\mathbb{N}}^n$.
\end{proof}

The next goal in this subsection is to apply Theorem~\ref{teo:Toeplitz_U(n)-invariant-(0,1)n} to the case of the moment map symbols corresponding to the Abelian Elliptic Action. 
The description of such symbols provided by  Proposition~\ref{propo:moment_symbols_explicit} will greatly simplify our formulas.

\begin{teo}\label{teo:Toeplitz_muT}
	Let $a \in L^\infty(D^{III}_n)^{\mu^{\mathbb{T}}}$ and $\lambda > n$ be given. Let $f$ be a measurable function such that $a(Z) = f\big(\mathrm{tr}\big((I_n - Z\overline{Z})^{-1}\big)\big)$, for almost every $Z \in D^{III}_n$. Then, the complex constants $(c_{a,\lambda}(\alpha))_{\alpha \in \overrightarrow{\mathbb{N}}^n}$ such that
	\[
		T^{(\lambda)}_a|_{\mathcal{P}^\alpha} =
			c_{a,\lambda}(\alpha) I_{\mathcal{P}^\alpha},
	\]
	for every $\alpha \in \overrightarrow{\mathbb{N}}^n$, as obtained in Theorem~\ref{teo:Toeplitz_U(n)-invariant}, are given by
	\[
		c_{a,\lambda}(\alpha) = 
		\frac{\displaystyle\int\limits_{\overrightarrow{(0,1)}^n}
			f\bigg(\sum_{j=1}^n \frac{1}{1-x_j}\bigg)
			H_{\alpha_n,\lambda}(x) h_\alpha(x) \dif x}%
		{\displaystyle\int\limits_{\overrightarrow{(0,1)}^n}
			H_{\alpha_n,\lambda}(x) h_\alpha(x) \dif x},
	\]
	where the functions $H_{\alpha_n,\lambda}, h_\alpha : \overrightarrow{(0,1)}^n \rightarrow [0,\infty)$ are those defined in Theorem~\ref{teo:Toeplitz_U(n)-invariant-(0,1)n}.
\end{teo}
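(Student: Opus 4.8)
The plan is to recognize this theorem as a direct specialization of Theorem~\ref{teo:Toeplitz_U(n)-invariant-(0,1)n}, obtained by inserting the explicit form of the moment-map symbol into the integral formula already established there. The only computation needed is the evaluation of $a$ along the real diagonal matrices $D(\sqrt{x})$ that appear in that formula.

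First I would note that, by Proposition~\ref{propo:muT-U(n)inv}, every $\mu^{\mathbb{T}}$-symbol is in particular $\mathrm{U}(n)$-invariant, so that $a \in L^\infty(D^{III}_n)^{\mathrm{U}(n)}$ and Theorem~\ref{teo:Toeplitz_U(n)-invariant-(0,1)n} applies without modification. This immediately yields that the constants $c_{a,\lambda}(\alpha)$ are given by the stated quotient of integrals over $\overrightarrow{(0,1)}^n$, with the symbol entering only through the factor $a(D(\sqrt{x}))$. Thus it suffices to identify this factor with $f\big(\sum_{j=1}^n \frac{1}{1-x_j}\big)$.

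The remaining step is the evaluation of $a(D(\sqrt{x}))$ for $x \in \overrightarrow{(0,1)}^n$. Since such an $x$ has real positive entries, $\sqrt{x}$ is real and $D(\sqrt{x})$ is a real diagonal matrix, so $\overline{D(\sqrt{x})} = D(\sqrt{x})$. Hence $D(\sqrt{x})\overline{D(\sqrt{x})} = D(x)$, which gives $I_n - D(\sqrt{x})\overline{D(\sqrt{x})} = D(\mathbf{1} - x)$ with $\mathbf{1} - x = (1 - x_1, \dots, 1 - x_n)$. Inverting this diagonal matrix and taking the trace yields $\mathrm{tr}\big((I_n - D(\sqrt{x})\overline{D(\sqrt{x})})^{-1}\big) = \sum_{j=1}^n \frac{1}{1 - x_j}$, and therefore $a(D(\sqrt{x})) = f\big(\sum_{j=1}^n \frac{1}{1-x_j}\big)$, where I have used the representation $a(Z) = f\big(\mathrm{tr}\big((I_n - Z\overline{Z})^{-1}\big)\big)$ from the hypothesis (valid for a.e.\ $Z$, hence in particular along the diagonal locus appearing in the integral).

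Substituting this identity into the formula of Theorem~\ref{teo:Toeplitz_U(n)-invariant-(0,1)n} produces exactly the claimed expression for $c_{a,\lambda}(\alpha)$. There is no genuine obstacle here: the result is a clean consequence of the already-proven reduction to $\overrightarrow{(0,1)}^n$ combined with the fact that a real diagonal matrix $Z = D(\sqrt{x})$ makes $Z\overline{Z}$ diagonal, so the trace in the moment-map symbol collapses to the scalar $\sum_{j=1}^n (1-x_j)^{-1}$. The mild point to verify is that the measure on $\overrightarrow{(0,1)}^n$ charges the diagonal evaluation correctly, but this is automatic since the diagonal matrices $D(\sqrt{x})$ are precisely the representatives used in Theorem~\ref{teo:Toeplitz_U(n)-invariant-(0,1)n}.
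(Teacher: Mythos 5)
Your proposal is correct and matches the paper's own proof: both reduce the statement to Theorem~\ref{teo:Toeplitz_U(n)-invariant-(0,1)n} (using the $\mathrm{U}(n)$-invariance from Proposition~\ref{propo:muT-U(n)inv}) and then evaluate $a(D(\sqrt{x})) = f\big(\sum_{j=1}^n \tfrac{1}{1-x_j}\big)$ via the diagonal computation $D(\sqrt{x})\overline{D(\sqrt{x})} = D(x)$. No gaps; the argument is essentially identical to the one in the paper.
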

\begin{proof}
	It is an immediate consequence of Theorem~\ref{teo:Toeplitz_U(n)-invariant-(0,1)n} and the computation
	\begin{align*}
		a(D(\sqrt{x})) 
			&= f\big(\mathrm{tr}
				\big((I_n - D(\sqrt{x})\overline{D(\sqrt{x})})^{-1}\big)\big) \\
			&= f\big(\mathrm{tr}\big((I_n - D(x))^{-1}\big)\big) \\
			&= f\bigg(\sum_{j=1}^n \frac{1}{1-x_j}\bigg),
	\end{align*}
	which holds for every $x \in \overrightarrow{(0,1)}^n$.
\end{proof}

\subsection{Toeplitz operators with Parabolic symbols}
We recall from subsection~\ref{subsec:BergmanToeplitz} the decomposition
\[
	\mathscr{S}_n = \mathrm{Symm}(n,\mathbb{R}) \oplus i \Omega_n,
\]
where $\Omega_n = \mathrm{Pos}(n,\mathbb{R})$. With respect to this decomposition, and for every $\lambda > n$, the weighted measure $\widehat{v}_\lambda$ decomposes as
\[
	\dif \widehat{v}_\lambda(Z) =
		C_{\lambda,n} \det(2Y)^{\lambda-n-1} \dif X \dif Y,
\]
with the coordinates $Z = X + i Y$, ($X \in \mathrm{Symm}(n,\mathbb{R})$ and $Y \in \Omega_n$) and for the positive constant
\[
	C_{\lambda, n} = 
		\frac{\Gamma_{\Omega_n}(\lambda)}%
			{\pi^{\frac{n(n+1)}{2}} \Gamma_{\Omega_n}\big(\lambda - \frac{n+1}{2}\big)}.
\]
This yields the natural isometry
\[
	L^2(\mathscr{S}_n,\widehat{v})
		\simeq L^2(\mathrm{Symm}(n,\mathbb{R}), \dif X) \otimes 
			L^2(\Omega_n, C_{\lambda,n} \det(2Y)^{\lambda-n-1} \dif Y),
\]
that we will use in the rest of this work.

Let us consider the unitary operator $U = \mathcal{F} \otimes I$ defined on $L^2(\mathscr{S}_n, \widehat{v}_\lambda)$, where $\mathcal{F}$ is the Fourier transform on $\mathrm{Symm}(n,\mathbb{R})$. More precisely, we have
\[
	(\mathcal{F}(f))(X)
		= \frac{1}{(2\pi)^{\frac{n(n+1)}{4}}} 
		\int_{\mathrm{Symm}(n,\mathbb{R})}
			e^{-i\mathrm{tr}(X\xi)} f(\xi) \dif \xi
\]
for every $f \in L^1(\mathrm{Symm}(n,\mathbb{R})) \cap L^2(\mathrm{Symm}(n,\mathbb{R}))$. In particular, we use as canonical inner product on $\mathrm{Symm}(n,\mathbb{R})$ the one induced by the trace. We recall that, with respect to such inner product, the cone $\Omega_n$ is self-dual in the sense that
\[
	\Omega_n = \{ \xi \in \mathrm{Symm}(n,\mathbb{R}) \mid
		\mathrm{tr}(\xi X) > 0 \text{ for all } 
			X \in \overline{\Omega}_n \setminus \{0\} \}.
\]
We will use this fundamental property (see \cite{Upmeier}) to apply some well known formulas associated to symmetric cones.

The next two results allow to describe the Bergman spaces after applying the unitary map $U$.

\begin{lema}\label{lem:Slambda}
	Let $\mathcal{H}_\lambda(\mathscr{S}_n) = U(\mathcal{A}^2_\lambda(\mathscr{S}_n))$ be the image of the Bergman space $\mathcal{A}^2_\lambda(\mathscr{S}_n)$ under the unitary map $U$. Then, the operator given by
	\begin{align*}
		S_\lambda : L^2(\Omega_n) &\longrightarrow
			L^2(\mathrm{Symm}(n,\mathbb{R})) \otimes
				L^2(\Omega_n, C_{\lambda,n} \det(2Y)^{\lambda-n-1} 
					\dif Y) \\
		(S_\lambda(f))(X,Y) &= 
			\frac{(2\pi)^{\frac{n(n+1)}{4}}}%
			{\Gamma_{\Omega_n}(\lambda)^{\frac{1}{2}}}
			\chi_{\Omega_n}(X) f(X) \det(X)^{\frac{\lambda}{2}-\frac{n+1}{4}}
			e^{-\mathrm{tr}(XY)},
	\end{align*}
	is an isometry onto $\mathcal{H}_\lambda(\mathscr{S}_n)$.
\end{lema}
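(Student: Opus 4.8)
The plan is to prove two separate statements: that $S_\lambda$ preserves norms, and that its range is exactly $\mathcal{H}_\lambda(\mathscr{S}_n)$. The two essential tools are the Gindikin--Koecher Laplace transform formula over the symmetric cone $\Omega_n$, namely
\[
	\int_{\Omega_n} e^{-\mathrm{tr}(XW)} \det(W)^{s-\frac{n+1}{2}} \dif W
		= \Gamma_{\Omega_n}(s) \det(X)^{-s},
\]
valid for $X \in \Omega_n$ and $s > \frac{n-1}{2}$, together with the Fourier--Laplace (Paley--Wiener type) description of the holomorphic functions lying in $\mathcal{A}^2_\lambda(\mathscr{S}_n)$. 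The self-duality of $\Omega_n$ recalled above is what lets us read off the support condition in the second step.

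First I would verify that $S_\lambda$ is an isometry by direct computation. Since $(S_\lambda f)(X,Y)$ is supported on $X \in \Omega_n$ through the factor $\chi_{\Omega_n}(X)$, squaring the defining formula and integrating against $C_{\lambda,n}\det(2Y)^{\lambda-n-1}\dif X \dif Y$ gives
\[
	\|S_\lambda f\|^2 = \frac{(2\pi)^{\frac{n(n+1)}{2}}}{\Gamma_{\Omega_n}(\lambda)} C_{\lambda,n}
		\int_{\Omega_n} |f(X)|^2 \det(X)^{\lambda-\frac{n+1}{2}}
		\left( \int_{\Omega_n} e^{-2\mathrm{tr}(XY)} \det(2Y)^{\lambda-n-1} \dif Y \right) \dif X.
\]
After the substitution $W = 2Y$ the inner integral is evaluated by the Laplace formula above with $s = \lambda - \frac{n+1}{2}$, yielding $2^{-\frac{n(n+1)}{2}} \Gamma_{\Omega_n}\big(\lambda-\frac{n+1}{2}\big) \det(X)^{-\lambda+\frac{n+1}{2}}$. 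The two powers of $\det(X)$ cancel exactly, which is precisely what the exponent $\frac{\lambda}{2}-\frac{n+1}{4}$ in the definition of $S_\lambda$ is designed to produce, and substituting the value of $C_{\lambda,n}$ collapses every constant to $1$, leaving $\|S_\lambda f\|^2 = \int_{\Omega_n} |f(X)|^2 \dif X = \|f\|^2_{L^2(\Omega_n)}$.

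Next I would show $\mathrm{ran}(S_\lambda) \subseteq \mathcal{H}_\lambda(\mathscr{S}_n)$. Applying $U^{-1} = \mathcal{F}^{-1} \otimes I$ to $S_\lambda f$ and recognizing $e^{-\mathrm{tr}(XY)}$ as $|e^{i\,\mathrm{tr}(ZX)}|$ with $Z = X'+iY$, one obtains
\[
	(U^{-1} S_\lambda f)(X',Y) = c \int_{\Omega_n}
		e^{i\,\mathrm{tr}((X'+iY)X)} f(X) \det(X)^{\frac{\lambda}{2}-\frac{n+1}{4}} \dif X,
\]
which depends on $Z$ only through the holomorphic kernel $e^{i\,\mathrm{tr}(ZX)}$. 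The factor $e^{-\mathrm{tr}(YX)}$ with $Y \in \Omega_n$ provides enough decay (via Cauchy--Schwarz against $f \in L^2$) both to guarantee convergence and to differentiate under the integral sign, so $U^{-1}S_\lambda f$ is holomorphic on $\mathscr{S}_n$. Since $U$ is unitary and $S_\lambda$ is isometric, $U^{-1}S_\lambda f \in L^2(\mathscr{S}_n, \widehat{v}_\lambda)$; hence it belongs to $\mathcal{A}^2_\lambda(\mathscr{S}_n)$ and $S_\lambda f \in \mathcal{H}_\lambda(\mathscr{S}_n)$.

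The reverse inclusion $\mathcal{H}_\lambda(\mathscr{S}_n) \subseteq \mathrm{ran}(S_\lambda)$ is the main obstacle, and it is where the Paley--Wiener description enters. For $F \in \mathcal{A}^2_\lambda(\mathscr{S}_n)$ and fixed $Y$, the slice $X \mapsto F(X+iY)$ lies in $L^2(\mathrm{Symm}(n,\mathbb{R}))$ for almost every $Y \in \Omega_n$ by Fubini. The Cauchy--Riemann equations $\partial_{Y_{jk}} F = i\,\partial_{X_{jk}} F$ become, after the partial Fourier transform in $X$, a first-order linear system in $Y$ whose solution forces $(UF)(X,Y) = e^{-\mathrm{tr}(XY)} c(X)$ for some $c$ independent of $Y$; the factors of $2$ arising from the symmetric-matrix coordinates appear identically on both sides and so do not affect this conclusion. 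Finiteness of $\int_{\Omega_n} \|(UF)(\cdot,Y)\|^2 \det(2Y)^{\lambda-n-1}\dif Y$ then forces $c$ to be supported on the dual cone, which by self-duality is $\Omega_n$, and to satisfy the integrability matching the weight $\det(X)^{\lambda-\frac{n+1}{2}}$. Setting $f(X) = \Gamma_{\Omega_n}(\lambda)^{\frac12}(2\pi)^{-\frac{n(n+1)}{4}}\det(X)^{-\frac{\lambda}{2}+\frac{n+1}{4}} c(X)$ exhibits $UF = S_\lambda f$ with $f \in L^2(\Omega_n)$, and the inclusion follows. I expect the delicate points to be the rigorous justification of the Fourier-transformed Cauchy--Riemann argument and the support and integrability conclusions; these may alternatively be imported directly from the Laplace representation of weighted Bergman spaces on tube domains over symmetric cones, as developed in Faraut--Kor\'anyi or in \cite{Upmeier}.
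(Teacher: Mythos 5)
Your proposal is correct and takes essentially the same approach as the paper's proof: both arguments rest on the partial Fourier transform turning the Cauchy--Riemann equations into first-order equations in $Y$ whose solutions have the form $\psi(X)e^{-\mathrm{tr}(XY)}$ with $\mathrm{supp}(\psi)\subset\Omega_n$, combined with the cone Laplace transform formula (\cite[Equation~2.4.30]{Upmeier}) to evaluate the inner $Y$-integral and collapse all constants to $1$. The only difference is organizational---you separate the isometry computation from the two range inclusions and verify holomorphy of $U^{-1}S_\lambda f$ directly, making explicit (including the symmetric-coordinate factors of $2$) what the paper leaves implicit.
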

\begin{proof}
	From the basic properties of the Fourier transform, the Cauchy-Riemann equations on $\mathscr{S}_n$ are transformed under $U$ to the equations
	\[
		\bigg(X_{jk} + \frac{\partial}{\partial Y_{jk}}\bigg) 
			\varphi=0,
	\]
	which must hold for every $1 \leq j \leq k \leq n$. The general solution of these equations is $\varphi(X,Y) = \psi(X)e^{-\mathrm{tr}(XY)}$. Next, we need to consider the $L^2$-integrability of these solutions, and for this we evaluate
	\begin{align*}
		\int_{\mathscr{S}_n}|
			&\varphi(X,Y)|^2 C_{\lambda,n} \det(2Y)^{\lambda-n-1} 
				\dif X \dif Y =  \\
			=\;&C_{\lambda,n} \int_{\mathscr{S}_n} 
			|\psi(X)|^2 e^{-2\mathrm{tr}(XY)}
				\det(2Y)^{\lambda-n-1} \dif X \dif Y  \\
			=\;& C_{\lambda,n} 			
				\int_{\mathrm{Symm}(n,\mathbb{R})} |\psi(X)|^2
				\bigg(
				\int_{\Omega_n} 	
					e^{-2\mathrm{tr}(XY)} \det(2Y)^{\lambda-n-1}
					\dif Y
				\bigg) \dif X.
	\end{align*}
	For this to be finite it is necessary that $\mathrm{supp}(\psi) \subset \Omega_n$. On the other hand, \cite[Equation~2.4.30]{Upmeier} implies that (after some simple changes of variable) we have
	\[
		\int_{\Omega_n} 	
			e^{-2\mathrm{tr}(XY)} \det(2Y)^{\lambda-n-1}
				\dif Y =
			\frac{\Gamma_{\Omega_n}\big(\lambda - \frac{n+1}{2}\big)}{2^{\frac{n(n+1)}{2}}}
				\det(X)^{\frac{n+1}{2} - \lambda}
	\]
	Hence, in the above solution of the Cauchy-Riemann equations we replace $\psi(X)$ by the function
	\[
		\psi(X) = 
		\frac{(2\pi)^{\frac{n(n+1)}{4}}}%
			{\Gamma_{\Omega_n}(\lambda)^{\frac{1}{2}}}
			\chi_{\Omega_n}(X) f(X)
			\det(X)^{\frac{\lambda}{2}-\frac{n+1}{4}}
	\]
	for a suitable function $f$. With these choices and the previous computations we obtain
	\begin{align*}
		\|\varphi\|^2_{\mathcal{H}_\lambda(\mathscr{S}_n)} 
			=\;& C_{\lambda,n}
				\int_{\Omega_n}
				\frac{(2\pi)^{\frac{n(n+1)}{2}}}%
				{\Gamma_{\Omega_n}(\lambda)}
				|f(X)|^2
				\det(X)^{\lambda-\frac{n+1}{2}} \times \\
				& \times \frac{\Gamma_{\Omega_n}\big(\lambda - \frac{n+1}{2}\big)}{2^{\frac{n(n+1)}{2}}}
				\det(X)^{\frac{n+1}{2} - \lambda}
				\dif X \\
			=\;& C_{\lambda,n} 
				\frac{\pi^{\frac{n(n+1)}{2}} \Gamma_{\Omega_n}\big(\lambda - \frac{n+1}{2}\big)}%
				{\Gamma_{\Omega_n}(\lambda)} \|f\|^2_{L^2(\Omega_n)}
				= \|f\|^2_{L^2(\Omega_n)},
	\end{align*}
	where we have used the definition of the constant $C_{\lambda,n}$. The last set of identities completes the proof by the definition of $S_\lambda$.
\end{proof}

\begin{lema}\label{lem:SlambdaAdjoint}
	The adjoint operator of $S_\lambda$ from Lemma~\ref{lem:Slambda} is a partial isometry with initial space $\mathcal{H}_\lambda(\mathscr{S}_n)$ and final space $L^2(\Omega_n)$. Furthermore, we have
	\begin{align*}
	(S_\lambda^*(\varphi))(X) =\;&
		\frac{2^{\frac{n(n+1)}{4}}\Gamma_{\Omega_n} (\lambda)^{\frac{1}{2}}}%
			{\pi^{\frac{n(n+1)}{4}}\Gamma_{\Omega_n}
				\big(\lambda-\frac{n+1}{2}\big)}
		\det(X)^{\frac{\lambda}{2}-\frac{n+1}{4}} \times \\
		& \times \int_{\Omega_n}
			\varphi(X,Y)
			e^{-\mathrm{tr}(XY)}
				\det(2Y)^{\lambda-n-1} \dif Y,
	\end{align*}
	for every $\varphi \in L^2(\mathrm{Symm}(n,\mathbb{R})) \otimes
	L^2(\Omega_n, C_{\lambda,n} \det(2Y)^{\lambda-n-1} \dif Y)$.
\end{lema}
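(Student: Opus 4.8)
The plan is to separate the two assertions. The claim that $S_\lambda^*$ is a partial isometry with the stated initial and final spaces follows formally from Lemma~\ref{lem:Slambda} together with general Hilbert space theory, so I would dispose of it first. Since $S_\lambda : L^2(\Omega_n) \to \mathcal{H}_\lambda(\mathscr{S}_n)$ is an isometry \emph{onto} its range, we have $S_\lambda^* S_\lambda = I_{L^2(\Omega_n)}$, and consequently $S_\lambda S_\lambda^*$ is the orthogonal projection onto the closed subspace $\mathcal{H}_\lambda(\mathscr{S}_n)$. From $S_\lambda^* S_\lambda = I$ it follows at once that $S_\lambda^*$ is surjective onto $L^2(\Omega_n)$, which is thus its final space, while its initial space is $(\ker S_\lambda^*)^\perp = \overline{\mathrm{range}(S_\lambda)} = \mathcal{H}_\lambda(\mathscr{S}_n)$; on this subspace $S_\lambda^*$ is isometric, which is exactly the partial isometry statement.

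The substantive part is deriving the explicit integral formula, which I would obtain directly from the defining relation $\langle S_\lambda f, \varphi\rangle = \langle f, S_\lambda^* \varphi\rangle$. Writing out the inner product of the target space $L^2(\mathrm{Symm}(n,\mathbb{R})) \otimes L^2(\Omega_n, C_{\lambda,n}\det(2Y)^{\lambda-n-1}\,dY)$ and inserting the formula for $S_\lambda f$ from Lemma~\ref{lem:Slambda}, the factor $\chi_{\Omega_n}(X)$ restricts the $X$-integration to $\Omega_n$, yielding
\[
	\langle S_\lambda f, \varphi\rangle
	= \frac{(2\pi)^{\frac{n(n+1)}{4}}}{\Gamma_{\Omega_n}(\lambda)^{\frac12}}\, C_{\lambda,n}
	\int_{\Omega_n} f(X)\, \det(X)^{\frac{\lambda}{2}-\frac{n+1}{4}}
	\left( \int_{\Omega_n} \overline{\varphi(X,Y)}\, e^{-\mathrm{tr}(XY)} \det(2Y)^{\lambda-n-1}\,dY \right) dX.
\]
Comparing with $\langle f, S_\lambda^*\varphi\rangle = \int_{\Omega_n} f(X)\,\overline{(S_\lambda^*\varphi)(X)}\,dX$ for all $f$, and taking complex conjugates (all remaining factors, namely the measure, $e^{-\mathrm{tr}(XY)}$ and $\det(X)^{\lambda/2-(n+1)/4}$ for $X\in\Omega_n$, being real) gives the stated formula, provided the prefactor is correctly identified.

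The bookkeeping of that constant is the last step: substituting $C_{\lambda,n} = \Gamma_{\Omega_n}(\lambda)/(\pi^{n(n+1)/2}\Gamma_{\Omega_n}(\lambda-(n+1)/2))$ and using $(2\pi)^{n(n+1)/4} = 2^{n(n+1)/4}\pi^{n(n+1)/4}$, the coefficient $(2\pi)^{n(n+1)/4}\Gamma_{\Omega_n}(\lambda)^{-1/2} C_{\lambda,n}$ collapses to $2^{n(n+1)/4}\Gamma_{\Omega_n}(\lambda)^{1/2}/(\pi^{n(n+1)/4}\Gamma_{\Omega_n}(\lambda-(n+1)/2))$, exactly as in the statement.

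The main obstacle I anticipate is the analytic justification of the interchange of the two integrations and the treatment of the inner $Y$-integral as a genuine pointwise-in-$X$ object for arbitrary $L^2$ data. I would circumvent this by first verifying the adjoint identity for $f$ and $\varphi$ ranging over dense subsets on which all integrals converge absolutely (for instance compactly supported functions, or finite sums of elementary tensors), where Fubini applies without difficulty, and then extending by continuity using the boundedness of $S_\lambda$ already furnished by Lemma~\ref{lem:Slambda} (whence $S_\lambda^*$ is bounded as well). This establishes the formula on a dense set and hence everywhere, avoiding a direct integrability argument for the defining integral.
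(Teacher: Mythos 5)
Your proposal is correct and follows essentially the same route as the paper: the partial isometry claim is deduced from Lemma~\ref{lem:Slambda} by standard Hilbert space facts, and the integral formula is obtained from the adjoint pairing $\langle S_\lambda f,\varphi\rangle=\langle f,S_\lambda^*\varphi\rangle$ with the same identification of the constant via $C_{\lambda,n}$, which you verify correctly. Your extra care about justifying Fubini on a dense subspace and extending by boundedness is a sound refinement of what the paper dismisses as a ``straightforward computation.''
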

\begin{proof}
	The first claim follows from Lemma~\ref{lem:Slambda}. On the other hand, the expression for $S_\lambda^*$ is a consequence of the following straightforward computation for $f$ and $\varphi$ in the corresponding spaces
	\begin{align*}
		\langle S_\lambda(f), &\varphi \rangle = \\
		=\;& C_{\lambda,n} \int_{\mathscr{S}_n}
			(S_\lambda(f))(X,Y) \overline{\varphi(X,Y)}
				\det(2Y)^{\lambda-n-1} \dif X \dif Y \\
		=\;& C_{\lambda,n} 
			\frac{(2\pi)^{\frac{n(n+1)}{4}}}%
				{\Gamma_{\Omega_n}(\lambda)^{\frac{1}{2}}}
			\int_{\Omega_n} f(X) 	
				 \times \\
		&\times 
			\det(X)^{\frac{\lambda}{2}-\frac{n+1}{4}}
			\overline{
			\bigg(\int_{\Omega_n}
				\varphi(X,Y)
				e^{-\mathrm{tr}(XY)} 
				\det(2Y)^{\lambda-n-1}				
			\bigg)
			} \dif X,
	\end{align*}
	where we have used again the value of $C_{\lambda,n}$.
\end{proof}

The next result provides a formula for the Bergman projection after applying the unitary map $U$.

\begin{lema}\label{lem:BergmanU}
	Let $B_\lambda = U B_{\mathscr{S}_n,\lambda} U^*$ be the orthogonal projection
	\[
		L^2(\mathrm{Symm}(n,\mathbb{R})) \otimes	
			L^2(\Omega_n, C_{\lambda,n} \det(2Y)^{\lambda-n-1} \dif Y)
			\longrightarrow
			 \mathcal{H}_\lambda(\mathscr{S}_n).
	\]
	Then, we have the identities
	\[
		S_\lambda^* S_\lambda = I_{L^2(\Omega_n)}, \quad
		S_\lambda S_\lambda^* = B_\lambda.
	\]
	In particular, the orthogonal projection $B_\lambda$ is given by 	\begin{align*}
	(B_\lambda(\varphi))(X,Y) =\;& 
		\frac{2^\frac{n(n+1)}{2}}{\Gamma_{\Omega_n}
				\big(\lambda-\frac{n+1}{2}\big)}
				\chi_{\Omega_n}(X)
		 	\det(X)^{\lambda-\frac{n+1}{2}}
		 	e^{-\mathrm{tr}(XY)} \times   \\
 			&\times
			\int_{\Omega_n}
		    e^{-\mathrm{tr}(X\eta)}
		    \det(2\eta)^{\lambda-n-1}\varphi(X,\eta) \dif \eta.
	\end{align*}
\end{lema}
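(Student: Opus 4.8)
The plan is to exploit the fact, established in Lemma~\ref{lem:Slambda}, that $S_\lambda$ is an isometry of $L^2(\Omega_n)$ onto $\mathcal{H}_\lambda(\mathscr{S}_n)$, and then reduce both asserted operator identities to standard facts about isometries between Hilbert spaces. For the first identity, the isometry property $\|S_\lambda f\| = \|f\|$ for all $f \in L^2(\Omega_n)$ gives $\langle S_\lambda^* S_\lambda f, f\rangle = \langle f, f\rangle$, and polarization upgrades this to $S_\lambda^* S_\lambda = I_{L^2(\Omega_n)}$. For the second identity I would recall that whenever $V$ is an isometry onto a closed subspace $\mathcal{R}$, the operator $VV^*$ is self-adjoint and idempotent---indeed $(VV^*)^2 = V(V^*V)V^* = VV^*$ using the first identity---so it is the orthogonal projection onto the range $\mathcal{R}$ of $V$. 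Applying this to $V = S_\lambda$ shows that $S_\lambda S_\lambda^*$ is the orthogonal projection onto $\mathcal{H}_\lambda(\mathscr{S}_n)$.

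It then remains to identify this projection with $B_\lambda$. Since $U$ is unitary and $B_{\mathscr{S}_n,\lambda}$ is the orthogonal projection onto $\mathcal{A}^2_\lambda(\mathscr{S}_n)$, the conjugate $B_\lambda = U B_{\mathscr{S}_n,\lambda} U^*$ is the orthogonal projection onto $U(\mathcal{A}^2_\lambda(\mathscr{S}_n)) = \mathcal{H}_\lambda(\mathscr{S}_n)$, by the very definition of $\mathcal{H}_\lambda(\mathscr{S}_n)$ in Lemma~\ref{lem:Slambda}. As two orthogonal projections onto the same closed subspace coincide, we obtain $S_\lambda S_\lambda^* = B_\lambda$.

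For the explicit integral formula, I would simply compose the closed expressions for $S_\lambda$ and $S_\lambda^*$ from Lemmas~\ref{lem:Slambda} and \ref{lem:SlambdaAdjoint}. Writing $(B_\lambda(\varphi))(X,Y) = (S_\lambda(S_\lambda^* \varphi))(X,Y)$, I insert the formula for $S_\lambda^*\varphi$ into the formula for $S_\lambda$; the two factors $\det(X)^{\frac{\lambda}{2}-\frac{n+1}{4}}$ multiply to $\det(X)^{\lambda-\frac{n+1}{2}}$, and the scalar prefactors
\[
	\frac{(2\pi)^{\frac{n(n+1)}{4}}}{\Gamma_{\Omega_n}(\lambda)^{\frac{1}{2}}}
	\cdot
	\frac{2^{\frac{n(n+1)}{4}}\Gamma_{\Omega_n}(\lambda)^{\frac{1}{2}}}
	{\pi^{\frac{n(n+1)}{4}}\Gamma_{\Omega_n}\big(\lambda-\frac{n+1}{2}\big)}
\]
collapse to $2^{\frac{n(n+1)}{2}} / \Gamma_{\Omega_n}\big(\lambda-\frac{n+1}{2}\big)$ after cancelling the $\Gamma_{\Omega_n}(\lambda)^{\frac{1}{2}}$ and $\pi^{\frac{n(n+1)}{4}}$ terms and using $(2\pi)^{\frac{n(n+1)}{4}} = 2^{\frac{n(n+1)}{4}}\pi^{\frac{n(n+1)}{4}}$. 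This yields exactly the stated kernel, with the characteristic function $\chi_{\Omega_n}(X)$ and the factor $e^{-\mathrm{tr}(XY)}$ carried through from $S_\lambda$, and the inner integral in the dummy variable $\eta$ coming from $S_\lambda^*$.

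The whole argument is structurally short, and there is no genuine obstacle; the only delicate point is the bookkeeping of the normalizing constants in the final composition. The main thing to watch is keeping the exponents of $2$, of $\pi$, and of the multi-gamma factors correctly aligned when the two prefactors are multiplied, since a slip there would produce the wrong overall constant $2^{\frac{n(n+1)}{2}}/\Gamma_{\Omega_n}\big(\lambda-\frac{n+1}{2}\big)$.
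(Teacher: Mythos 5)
Your proposal is correct and follows essentially the same route as the paper: the paper likewise deduces $S_\lambda^* S_\lambda = I_{L^2(\Omega_n)}$ and $S_\lambda S_\lambda^* = B_\lambda$ from the fact that Lemma~\ref{lem:Slambda} makes $S_\lambda$ a (partial) isometry with initial space $L^2(\Omega_n)$ and final space $\mathcal{H}_\lambda(\mathscr{S}_n)$, and then obtains the integral formula by substituting the expression for $S_\lambda^*\varphi$ from Lemma~\ref{lem:SlambdaAdjoint} into the formula for $S_\lambda$, with exactly the constant and determinant-exponent bookkeeping you describe. Your version merely spells out the standard operator-theoretic facts (polarization, and that $VV^*$ is the orthogonal projection onto the range of an isometry $V$) that the paper invokes implicitly.
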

\begin{proof}
	By Lemma~\ref{lem:Slambda}, the operator $S_\lambda$ is a partial isometry with initial space $L^2(\Omega_n)$ and final space $\mathcal{H}_\lambda(\mathscr{S}_n)$. This implies the first two identities in the statement. Hence, it remains to compute $S_\lambda S_\lambda^*$ explicitly, and this is done as follows for every $\varphi \in \mathcal{H}_\lambda(\mathscr{S}_n)$
	\begin{align*}
		((S_\lambda S_\lambda^*)(\varphi))(X,Y) &= \\
			= \frac{(2\pi)^{\frac{n(n+1)}{4}}}%
			{\Gamma_{\Omega_n}(\lambda)^{\frac{1}{2}}} &
			\chi_{\Omega_n}(X) (S_\lambda^*(\varphi))(X) \det(X)^{\frac{\lambda}{2}-\frac{n+1}{4}}
			e^{-\mathrm{tr}(XY)} \\
			= \frac{(2\pi)^{\frac{n(n+1)}{4}}}%
			{\Gamma_{\Omega_n}(\lambda)^{\frac{1}{2}}} &
			\chi_{\Omega_n}(X)
			\bigg(
				\frac{2^{\frac{n(n+1)}{4}}\Gamma_{\Omega_n} 
					(\lambda)^{\frac{1}{2}}}%
				{\pi^{\frac{n(n+1)}{4}}\Gamma_{\Omega_n}
					\big(\lambda-\frac{n+1}{2}\big)}
				\det(X)^{\frac{\lambda}{2}-\frac{n+1}{4}} \times \\
			& \times \int_{\Omega_n}
				\varphi(X,\eta)
				e^{-\mathrm{tr}(X\eta)}
				\det(2\eta)^{\lambda-n-1} \dif \eta 
			\bigg),
	\end{align*}
	which clearly simplifies to the required expression.
\end{proof}

The constructions considered so far allow us to introduce in the next result a Fourier-Laplace transform from $\mathcal{A}^2_\lambda(\mathscr{S}_n)$ onto $L^2(\Omega_n)$. We refer to \cite[Proposition~2.4.26]{Upmeier} for a similar related construction.

\begin{teo}\label{teo:RlambdaFourierLaplace}
	With the current notation and for every $\lambda>n$, the operator $R_\lambda = S_\lambda^* U : L^2_\lambda(\mathscr{S}_n,\widehat{v}_\lambda ) \rightarrow  L^2(\Omega_n)$ is a partial isometry with initial space $\mathcal{A}^2_\lambda(\mathscr{S}_n)$ and final space $L^2(\Omega_n)$. In particular, its adjoint 
	\[
		R_\lambda^* : L^2(\Omega_n) \longrightarrow
				L^2(\mathscr{S}_n,\widehat{v}_\lambda)
	\]
	is an isometry onto $\mathcal{A}^2_\lambda(\mathscr{S}_n)$. Furthermore, we have
	\[
		(R_\lambda^*(f))(Z)
			= \frac{1}{\Gamma_\Lambda(\lambda)^{\frac{1}{2}}}
			\int_{\Omega_n}
			f(\xi) \det(\xi)^{\frac{\lambda}{2}-\frac{n+1}{4}}
			e^{i\mathrm{tr}(\xi Z)} \dif \xi,
	\]
	for every $f \in L^2(\Omega_n)$ and $Z \in \mathscr{S}_n$.
\end{teo}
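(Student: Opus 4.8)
The plan is to derive the entire statement from the three preceding lemmas together with the unitarity of $U$, so that the partial isometry assertions reduce to two short operator identities and the closed formula to a single substitution.

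First I would establish the structural claims algebraically. Since $R_\lambda = S_\lambda^* U$ we have $R_\lambda^* = U^* S_\lambda$, and therefore
\[
	R_\lambda R_\lambda^* = S_\lambda^* U U^* S_\lambda = S_\lambda^* S_\lambda = I_{L^2(\Omega_n)},
\]
using that $U$ is unitary and the identity $S_\lambda^* S_\lambda = I_{L^2(\Omega_n)}$ from Lemma~\ref{lem:BergmanU}. Dually,
\[
	R_\lambda^* R_\lambda = U^* S_\lambda S_\lambda^* U = U^* B_\lambda U = B_{\mathscr{S}_n,\lambda},
\]
where the middle equality is $S_\lambda S_\lambda^* = B_\lambda$ from Lemma~\ref{lem:BergmanU} and the last one follows from $B_\lambda = U B_{\mathscr{S}_n,\lambda} U^*$. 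Since $R_\lambda^* R_\lambda$ is an orthogonal projection, $R_\lambda$ is a partial isometry; its initial space is the range of $R_\lambda^* R_\lambda = B_{\mathscr{S}_n,\lambda}$, namely $\mathcal{A}^2_\lambda(\mathscr{S}_n)$, and its final space is the range of $R_\lambda R_\lambda^* = I_{L^2(\Omega_n)}$, namely all of $L^2(\Omega_n)$. The relation $R_\lambda R_\lambda^* = I_{L^2(\Omega_n)}$ shows that $R_\lambda^*$ is an isometry, and combining $R_\lambda^* R_\lambda R_\lambda^* = R_\lambda^*$ with $R_\lambda^* R_\lambda = B_{\mathscr{S}_n,\lambda}$ identifies the range of $R_\lambda^*$ with the range of $B_{\mathscr{S}_n,\lambda}$, that is, with $\mathcal{A}^2_\lambda(\mathscr{S}_n)$.

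Second, I would obtain the closed formula for $R_\lambda^*$ by direct substitution. Writing $U^* = \mathcal{F}^{-1} \otimes I$, with $\mathcal{F}^{-1}$ the inverse Fourier transform acting in the $\mathrm{Symm}(n,\mathbb{R})$-variable, I apply $U^*$ to the explicit expression for $S_\lambda(f)$ given in Lemma~\ref{lem:Slambda}. The normalizing factor $(2\pi)^{n(n+1)/4}$ in $S_\lambda$ cancels against the $(2\pi)^{-n(n+1)/4}$ of $\mathcal{F}^{-1}$, leaving the constant $\Gamma_{\Omega_n}(\lambda)^{-1/2}$, while the cutoff $\chi_{\Omega_n}$ turns the integral over $\mathrm{Symm}(n,\mathbb{R})$ into one over $\Omega_n$. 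The one genuine computation is the assembly of the exponentials: writing $Z = X + iY \in \mathscr{S}_n$ and letting $\xi$ denote the integration variable, the inverse Fourier kernel $e^{i\mathrm{tr}(\xi X)}$ multiplied by the Laplace factor $e^{-\mathrm{tr}(\xi Y)}$ inherited from $S_\lambda$ combine into $e^{i\mathrm{tr}(\xi Z)}$, since $i\mathrm{tr}(\xi Z) = i\mathrm{tr}(\xi X) - \mathrm{tr}(\xi Y)$. Collecting these facts yields exactly the stated formula.

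The hard part is not conceptual but bookkeeping: in the explicit computation one must respect the chosen Fourier normalization, confirm that the two powers of $2\pi$ cancel precisely, and check that the oscillatory and decay factors recombine into the single exponential $e^{i\mathrm{tr}(\xi Z)}$. It is precisely this recombination that makes $R_\lambda^*(f)$ manifestly holomorphic in $Z$ on $\mathscr{S}_n$, in agreement with its range being $\mathcal{A}^2_\lambda(\mathscr{S}_n)$.
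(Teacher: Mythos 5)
Your proposal is correct and follows essentially the same route as the paper: the structural claims are reduced to the preceding lemmas (you verify $R_\lambda R_\lambda^* = I_{L^2(\Omega_n)}$ and $R_\lambda^* R_\lambda = B_{\mathscr{S}_n,\lambda}$ via Lemma~\ref{lem:BergmanU}, whereas the paper cites the partial-isometry statement of Lemma~\ref{lem:SlambdaAdjoint} together with the unitarity of $U$, an equivalent packaging of the same facts), and your derivation of the closed formula --- applying $U^* = \mathcal{F}^{-1}\otimes I$ to the expression for $S_\lambda(f)$, cancelling the powers of $2\pi$, and combining $e^{i\mathrm{tr}(\xi X)}e^{-\mathrm{tr}(\xi Y)}$ into $e^{i\mathrm{tr}(\xi Z)}$ --- is exactly the paper's computation. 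No gaps.
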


\begin{proof}
	Since $U$ is a unitary operator mapping $\mathcal{A}^2_\lambda(\mathscr{S}_n)$ onto $\mathcal{H}_\lambda(\mathscr{S}_n)$, it follows from Lemma~\ref{lem:SlambdaAdjoint} that $R_\lambda$ is a partial isometry with the indicated initial and final spaces. From this we now conclude that $R_\lambda^*$ is an isometry from $L^2(\Omega_n)$ onto $\mathcal{A}^2_\lambda(\mathscr{S}_n)$.

	It only remains to find the expression stated for $R_\lambda^*$, which is achieved in the following computation. For every $f \in L^2(\Omega_n)$ and $Z \in \mathscr{S}_n$ we have
	\begin{align*}
	(R_\lambda^*(f))(Z)
		=\;& ((U^* S_\lambda)(f))(Z)
		= ((\mathcal{F}^{-1} \otimes I)\circ S_\lambda(f))(Z)  \\
		=\;& (\mathcal{F}^{-1}\otimes I)
		\bigg(
			\frac{(2\pi)^{\frac{n(n+1)}{4}}}%
				{\Gamma_{\Omega_n}(\lambda)^{\frac{1}{2}}}
				\chi_{\Omega_n}(X) f(X)
				\det(X)^{\frac{\lambda}{2}-\frac{n+1}{4}}
				e^{-\mathrm{tr}(XY)}
		\bigg) \\
		=\;& \frac{1}{\Gamma_{\Omega_n}(\lambda)^{\frac{1}{2}}}
			\int_{\Omega_n} f(\xi)
			\det(\xi)^{\frac{\lambda}{2}-\frac{n+1}{4}}
			e^{-\mathrm{tr}(\xi Y)} 
			e^{i\mathrm{tr}(X\xi)}
			\dif \xi \\
		=\;&\frac{1}{\Gamma_{\Omega_n}(\lambda)^{\frac{1}{2}}}
			\int_{\Omega_n}
			f(\xi) \det(\xi)^{\frac{\lambda}{2}-\frac{n+1}{4}}
			e^{i\mathrm{tr}(\xi Z)}d\xi,
	\end{align*}
	where $Z = X + i Y$, with $X, Y$ real matrices.
\end{proof}

We recall from Proposition~\ref{propo:muSymm_and_invariance} that
\[
	L^\infty(\mathscr{S}_n)^{\mu^{\mathrm{Symm}(n,\mathbb{R})}} 
		= L^\infty(\mathscr{S}_n)^{\mathrm{Symm}(n,\mathbb{R})}.
\]
In other words, the $\mathrm{Symm}(n,\mathbb{R})$-invariant symbols and the moment map symbols for the $\mathrm{Symm}(n,\mathbb{R})$-action on $\mathscr{S}_n$ are the same. Hence, the next result provides integral formulas that simultaneously diagonalizes Toeplitz operators with either type of symbols. 

\begin{teo}\label{teo:Toeplitz_SymmnR}
	Let $a \in L^\infty(\mathscr{S}_n)$ be a $\mathrm{Symm}(n,\mathbb{R})$-invariant symbol and $\lambda > n$ be given. Then, for $R_\lambda$ the operator from Theorem~\ref{teo:RlambdaFourierLaplace} we have a commutative diagram
	\[
		\xymatrix{
		\mathcal{A}^2_\lambda(\mathscr{S}_n) 
		\ar[d]_{T^{(\lambda)}_a} \ar[r]^<(.2){R_\lambda} &
			L^2(\Omega_n)
			\ar[d]^{M_{\gamma_{a,\lambda}}} \\
		\mathcal{A}^2_\lambda(\mathscr{S}_n) 
		\ar[r]^<(.2){R_\lambda} &
			L^2(\Omega_n) \\
		}
	\]
	where $\gamma_{a,\lambda} \in L^\infty(\Omega_n)$ is  given by
	\begin{align*}
		\gamma_{a,\lambda}&(X) = \\
			&= \frac{2^{\frac{n(n+1)}{2}} \det(X)^{\lambda-\frac{n+1}{2}}}%
			{\Gamma_{\Omega_n}
				\big(\lambda-\frac{n+1}{2}\big)}
			\int_{\Omega_n}
				a(Y) e^{-2\mathrm{tr}(XY)} 
				\det(2Y)^{\lambda-n-1} \dif Y,
	\end{align*}
	for every $X \in \Omega_n$.
\end{teo}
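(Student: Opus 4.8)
The plan is to show that, after transporting $T^{(\lambda)}_a$ through the partial isometry $R_\lambda$ of Theorem~\ref{teo:RlambdaFourierLaplace}, the operator becomes a multiplication operator on $L^2(\Omega_n)$, and then to identify the multiplier with $\gamma_{a,\lambda}$. First I would exploit the hypothesis: since $a$ is $\mathrm{Symm}(n,\mathbb{R})$-invariant, Proposition~\ref{propo:moment_symbols_explicit} provides a measurable $f$ with $a(Z) = f(\mathrm{Im}(Z))$, so that under the tensor decomposition $L^2(\mathscr{S}_n,\widehat{v}_\lambda) \simeq L^2(\mathrm{Symm}(n,\mathbb{R})) \otimes L^2(\Omega_n, C_{\lambda,n}\det(2Y)^{\lambda-n-1}\dif Y)$ the operator $M_a$ acts only on the second factor, namely $M_a = I \otimes M_f$. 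The crucial consequence is that $M_a$ commutes with $U = \mathcal{F}\otimes I$, because $\mathcal{F}$ acts on the first factor only; hence $U M_a U^* = M_a$ and therefore $U T^{(\lambda)}_a U^* = (U B_{\mathscr{S}_n,\lambda} U^*)(U M_a U^*) = B_\lambda (I \otimes M_f)$, with $B_\lambda$ as in Lemma~\ref{lem:BergmanU}.

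Next I would conjugate by $R_\lambda = S_\lambda^* U$, using $R_\lambda^* = U^* S_\lambda$ together with the identities $B_\lambda = S_\lambda S_\lambda^*$ and $S_\lambda^* S_\lambda = I_{L^2(\Omega_n)}$ from Lemma~\ref{lem:BergmanU}. This gives
\[
	R_\lambda T^{(\lambda)}_a R_\lambda^* = S_\lambda^* B_\lambda (I\otimes M_f) S_\lambda = S_\lambda^* S_\lambda S_\lambda^* (I \otimes M_f) S_\lambda = S_\lambda^* (I \otimes M_f) S_\lambda,
\]
so that the Bergman projection $B_\lambda$ collapses and the entire analysis reduces to the single composite $S_\lambda^*(I\otimes M_f) S_\lambda$ acting on $L^2(\Omega_n)$. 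Substituting the explicit expressions from Lemmas~\ref{lem:Slambda} and~\ref{lem:SlambdaAdjoint}, applying $S_\lambda$ to $g\in L^2(\Omega_n)$ and then $I\otimes M_f$ inserts the factor $f(Y)e^{-\mathrm{tr}(XY)}$ into the $Y$-integral defining $S_\lambda^*$, which combines with the remaining $e^{-\mathrm{tr}(XY)}$ to produce $e^{-2\mathrm{tr}(XY)}$. All terms depending on $X$ factor out of the $Y$-integral, leaving exactly multiplication by $\gamma_{a,\lambda}(X)$ as stated, with the constant $2^{n(n+1)/2}/\Gamma_{\Omega_n}(\lambda-\frac{n+1}{2})$ arising from combining the normalizing constants of $S_\lambda$ and $S_\lambda^*$ and with the symbol $a(Y)$ in the statement standing for $f(Y)$. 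I expect the only delicate point to be the bookkeeping of these normalizing constants.

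Finally I would record two routine verifications. For the boundedness $\gamma_{a,\lambda}\in L^\infty(\Omega_n)$, I would estimate $|a(Y)|\le\|a\|_\infty$ and invoke the cone integral identity already used in Lemma~\ref{lem:Slambda}, namely
\[
	\int_{\Omega_n} e^{-2\mathrm{tr}(XY)} \det(2Y)^{\lambda-n-1}\dif Y = \frac{\Gamma_{\Omega_n}\big(\lambda-\frac{n+1}{2}\big)}{2^{\frac{n(n+1)}{2}}} \det(X)^{\frac{n+1}{2}-\lambda},
\]
which yields precisely $\|\gamma_{a,\lambda}\|_\infty \le \|a\|_\infty$. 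To pass from the conjugation identity to the stated commuting diagram, I would note that $R_\lambda^* R_\lambda = B_{\mathscr{S}_n,\lambda}$ acts as the identity on $\mathcal{A}^2_\lambda(\mathscr{S}_n)$ and that $T^{(\lambda)}_a$ preserves this Bergman space; hence for every $\psi \in \mathcal{A}^2_\lambda(\mathscr{S}_n)$ one obtains $R_\lambda T^{(\lambda)}_a\psi = (R_\lambda T^{(\lambda)}_a R_\lambda^*)R_\lambda\psi = M_{\gamma_{a,\lambda}} R_\lambda \psi$, which is exactly the commutativity asserted by the diagram.
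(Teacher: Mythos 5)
Your proposal is correct and follows essentially the same route as the paper: exploit $U M_a U^* = M_a$ (since $a$ depends only on $\mathrm{Im}(Z)$ and $U = \mathcal{F}\otimes I$), collapse the projections via the partial isometry identities of Lemma~\ref{lem:BergmanU} and Theorem~\ref{teo:RlambdaFourierLaplace} to reduce everything to $S_\lambda^* M_a S_\lambda$, and compute that composite explicitly from Lemmas~\ref{lem:Slambda} and~\ref{lem:SlambdaAdjoint}. Your two additions---the explicit bound $\|\gamma_{a,\lambda}\|_\infty \le \|a\|_\infty$ via the cone integral identity, and the pointwise verification of the diagram using $R_\lambda^* R_\lambda = B_{\mathscr{S}_n,\lambda}$---are routine details the paper leaves implicit, and both are correct.
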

\begin{proof}
	For our given $\mathrm{Symm}(n,\mathbb{R})$-invariant symbol $a \in L^\infty(\mathscr{S}_n)$ we have $T_a^{(\lambda)} = B_{\mathscr{S}_n,\lambda}\circ M_a$. Then, Theorem~\ref{teo:RlambdaFourierLaplace} implies that
	\begin{align*}
		R_\lambda T_a^{(\lambda)} R_\lambda^*
		&= R_\lambda 
			B_{\mathscr{S}_n,\lambda} M_a B_{\mathscr{S}_n,\lambda} 
			R_\lambda^*  \\
		&= R_\lambda (R_\lambda^* R_\lambda) M_a 
			(R_\lambda^*R_\lambda )R_\lambda^*\\
		&= R_\lambda M_a R_\lambda^*
		= S_\lambda^* U M_a U^* S_\lambda \\
		&= S_\lambda^* M_a S_\lambda,
	\end{align*}
	where we have used that $U M_a U^* = M_a$, since 
	$U = \mathcal{F} \otimes I$ and $a$ depends only on $Y = \mathrm{Im}(Z)$.
	
	We now evaluate the last composition as follows for every $f \in L^2(\Omega_n)$ and $X \in \Omega_n$
	\begin{align*}
		(S_\lambda^* &M_a S_\lambda(f))(X) = \\
			=\;& 
				\frac{2^{\frac{n(n+1)}{4}} \Gamma_{\Omega_n} (\lambda)^{\frac{1}{2}}}%
				{\pi^{\frac{n(n+1)}{4}} \Gamma_{\Omega_n}
				\big(\lambda-\frac{n+1}{2}\big)}
			\det(X)^{\frac{\lambda}{2}-\frac{n+1}{4}} \times \\
			& \int_{\Omega_n}
				\bigg(a(Y)
				\frac{(2\pi)^{\frac{n(n+1)}{4}}}%
				{\Gamma_{\Omega_n} (\lambda)^{\frac{1}{2}}}
				f(X) \det(X)^{\frac{\lambda}{2}-\frac{n+1}{4}}
				e^{-\mathrm{tr}(XY)} 
				\bigg) \times \\
			&\qquad\qquad e^{-\mathrm{tr}(XY)}
				\det(2Y)^{\lambda-n-1} \dif Y \\
			=\;& 
				\frac{2^{\frac{n(n+1)}{2}} \det(X)^{\lambda-\frac{n+1}{2}}}%
				{\Gamma_{\Omega_n}
				\big(\lambda-\frac{n+1}{2}\big)} f(X)  
				\int_{\Omega_n}
				a(Y) e^{-2\mathrm{tr}(XY)} 
				\det(2Y)^{\lambda-n-1} \dif Y,
	\end{align*}
	which yields the required conclusion.
\end{proof}

\begin{remark}\label{rmk:ConesVasilevski}
	Theorem~\ref{teo:Toeplitz_SymmnR} can be seen as a generalization of some of the results found in \cite{VasilevskiTube}. More precisely, \cite[Theorem~4.1]{VasilevskiTube} provides the diagonalization of Toeplitz operators with the so-called cone component symbols, and such results holds for every tubular domain. However, \cite{VasilevskiTube} considers only the weightless case. On the other hand, we have considered only the tubular domain $\mathscr{S}_n$, but our result holds for arbitrarily weighted Bergman spaces and Toeplitz operators with symbols that depend only on the cone coordinates.
\end{remark}

\subsection*{Acknowledgment}
This research was supported by a Conacyt scholarship, SNI-Conacyt and Conacyt grants 280732 and 61517.


\begin{thebibliography}{XX}
	\bibitem{DOQJFA} Dawson, Matthew; \'Olafsson, Gestur and Quiroga-Barranco, Raul: \emph{Commuting   Toeplitz   operators   on   bounded symmetric   domains   and   multiplicity-free restrictions   of   holomorphic   discrete   series}. J. Funct. Anal. 268 (2015), no. 7, 1711--1732.
	
	\bibitem{DQRadial} Dawson, Matthew and Quiroga-Barranco, Raul: \emph{Radial Toeplitz operators on the weighted Bergman spaces of Cartan domains}. Representation theory and harmonic analysis on symmetric spaces, 97--114, Contemp. Math., 714, Amer. Math. Soc., Providence, RI, 2018.
	
	\bibitem{GKVElliptic} Grudsky, S., Karapetyants, A. and Vasilevski, N.: \emph{Toeplitz operators on the unit ball in $\mathbb{C}^n$ with radial symbols}. J. Operator Theory 49 (2003), no. 2, 325--346.
	
	\bibitem{GKVHyperbolic} Grudsky, S., Karapetyants, A. and Vasilevski, N.: \emph{Dynamics of properties of Toeplitz operators on the upper half-plane: hyperbolic case}. Bol. Soc. Mat. Mexicana (3) 10 (2004), no. 1, 119--138.
	
	\bibitem{GKVParabolic} Grudsky, S., Karapetyants, A. and Vasilevski, N.: \emph{Dynamics of properties of Toeplitz operators on the upper half-plane: parabolic case}. J. Operator Theory 52 (2004), no. 1, 185--214.
	
	\bibitem{GKVRadial} Grudsky, S., Karapetyants, A. and Vasilevski, N.: \emph{Dynamics of properties of Toeplitz operators with radial symbols}. Integral Equations Operator Theory 50 (2004), no. 2, 217--253.
	
	\bibitem{GQVJFA} Grudsky, S., Quiroga-Barranco, R. and Vasilevski N.: \emph{Commutative $C^*$-algebras of Toeplitz operators and quantization on the unit disk}. J. Funct. Anal. 234 (2006), no. 1, 1--44.
	
	\bibitem{Johnson} Johnson, Kenneth D.: \emph{On a ring of invariant polynomials on a Hermitian symmetric space}. J. Algebra 67 (1980), no. 1, 72--81. 
	
	\bibitem{Helgason} Helgason, Sigurdur: Differential geometry, Lie groups, and symmetric spaces. Corrected reprint of the 1978 original. Graduate Studies in Mathematics, 34. American Mathematical Society, Providence, RI, 2001.
	
	\bibitem{Hua} Hua, L. K.: Harmonic analysis of functions of several complex variables in the classical domains. Translated from the Russian by Leo Ebner and Adam Kor\'anyi American Mathematical Society, Providence, R.I. 1963.
	
	\bibitem{KorenblumZhu1995} Korenblum, Boris and Zhu, Ke He: \emph{An application of Tauberian theorems to Toeplitz operators}. J. Operator Theory 33 (1995), no. 2, 353--361.
	
	\bibitem{McDuffSalamon} McDuff, Dusa and Salamon, Dietmar: Introduction to symplectic topology. Third edition. Oxford Graduate Texts in Mathematics. Oxford University Press, Oxford, 2017.
	
	\bibitem{Mok} Mok, Ngaiming: Metric rigidity theorems on Hermitian locally symmetric manifolds. Series in Pure Mathematics, 6. World Scientific Publishing Co., Inc., Teaneck, NJ, 1989.

	\bibitem{QSJFAUnitBall} Quiroga-Barranco, Raul and Sanchez-Nungaray, Armando: \emph{Moment maps of Abelian groups and commuting Toeplitz operators acting on the unit ball}, Journal of Functional Analysis 281 (2021), no. 3, article 109039.

	\bibitem{QRCAOT} Quiroga-Barranco, Raul and Seng, Monyrattanak: \emph{Commuting Toeplitz operators on Cartan domains of type IV and moment maps}. Complex Anal. Oper. Theory 16 (2022), no. 7, Paper No. 102, 41 pp. 
	
	\bibitem{QVBallI} Quiroga-Barranco, Raul and Vasilevski, Nikolai: \emph{Commutative $C^*$-algebras of Toeplitz operators on the unit ball. I. Bargmann-type transforms and spectral representations of Toeplitz operators}. Integral Equations Operator Theory 59 (2007), no. 3, 379--419.
	
	\bibitem{QVBallII} Quiroga-Barranco, Raul and Vasilevski, Nikolai: \emph{Commutative $C^*$-algebras of Toeplitz operators on the unit ball. II. Geometry of the level sets of symbols}. Integral Equations Operator Theory 60 (2008), no. 1, 89--132.

	\bibitem{Range} Range, R. Michael: Holomorphic functions and integral representations in several complex variables. Graduate Texts in Mathematics, 108. Springer-Verlag, New York, 1986.
	
	\bibitem{Upmeier} Upmeier, Harald: Toeplitz operators and index theory in several complex variables. Operator Theory: Advances and Applications, 81. Birkh\"auser Verlag, Basel, 1996.
	
	\bibitem{VasilevskiTube} Vasilevski, N. L.: \emph{Bergman space on tube domains and commuting Toeplitz operators}. Proceedings of the Second ISAAC Congress, Vol. 2 (Fukuoka, 1999), 1523--1537, Int. Soc. Anal. Appl. Comput., 8, Kluwer Acad. Publ., Dordrecht, 2000. 
\end{thebibliography}
\end{document}